\numberwithin{equation}{section}
\newtheorem{thm}{Theorem}[section]
\newtheorem{lem}[thm]{Lemma}
\newtheorem{cor}[thm]{Corollary}
\newtheorem{prop}[thm]{Proposition}
\newtheorem{rem}[thm]{Remark}
\newtheorem{dfn}[thm]{Definition}
\newcommand{\diam}{diam}
\newcommand{\aver}[1]{-\hskip-0.46cm\int_{#1}}
\newcommand{\ind}{1\hspace{-2.5 mm}{1}}
\DeclareMathOperator{\supp}{supp}
\begin{document}
\allowdisplaybreaks
\title{Real interpolation of Sobolev spaces associated to a weight}
\author{Nadine BADR}
\address{N.Badr
\\
Universit\'e de Paris-Sud, UMR du CNRS 8628
\\
91405 Orsay Cedex, France} \email{nadine.badr@math.u-psud.fr}

\subjclass[2000]{46B70, 35J10}

\keywords
{Real Interpolation, Riemannian manifolds, Sobolev spaces associated to a weight, Poincar\'{e} inequality, Fefferman-Phong inequality, reverse H\"older classes.}
\begin{abstract}
 We study the interpolation property of Sobolev spaces of order 1 denoted by $W^{1}_{p,V}$, arising from Schr\"{o}dinger operators with positive potential. We show that for $1\leq p_1<p<p_2<q_{0}$ with $p>s
 _0$, $W^{1}_{p,V}$ is a real interpolation space between $W_{p_1,V}^{1}$ and $W_{p_2,V}^{1}$ on some classes of manifolds and Lie groups. The constants $s_{0},\,q_{0}$ depend on our hypotheses. 
\end{abstract}
\maketitle
\tableofcontents
\section{Introduction}
In \cite{auscher3}, the Schr\"{o}dinger operator $-\Delta+V$ on $\mathbb{R}^{n}$ with $V\in A_{\infty}$, the Muckenhoupt class (see \cite{garcia}), is studied and the question whether the spaces defined by the norm $\|f\|_{p}+\|\,|\nabla f|\,\|_{p}+\|V^{\frac{1}{2}}f\|_{p}$ or ($\|\,|\nabla f|\,\|_{p}+\|V^{\frac{1}{2}}f\|_{p}$) interpolate is posed. In fact, it is shown that:
$$
\|\,|\nabla f|\,\|_{p}+\|V^{\frac{1}{2}}f\|_{p}\sim \|(-\Delta+V)^{\frac{1}{2}}f\|_{p}
$$
whenever $1<p<\infty$ and $p\leq 2q$, $f\in C_{0}^{\infty}(\mathbb{R}^{n})$, where $q>1$ is a Reverse H\"{o}lder exponent of $V$. Hence the question of interpolation can be solved a posteriori using functional calculus and interpolation of $L_{p}$ spaces. However, it is reasonable to expect a direct proof. 

Here we provide such an argument with $p$ lying in an interval depending on the Reverse H\"{o}lder exponent of $V$ by estimating the $K$-functional of real interpolation. The particular case $V=1$ is treated in \cite{badr1} (also $V=0$). The method is actually valid on some Lie groups and even some Riemannian manifolds in which we place ourselves.

Let us come to statements:
\begin{dfn}
Let $M$ be a Riemannian manifold, $V\in A_{\infty}$. Consider  for $1\leq p<\infty$, the vector space $E_{p,V}^{1}$ of $C^{\infty}$ functions $f$ on $M$ such that $f,\,|\nabla f|$ and $ Vf \in L_{p}(M)$. We define the Sobolev space $W_{p,V}^{1}(M)=W_{p,V}^{1}$ as the completion of $E_{p,V}^{1}$ for the norm 
$$
\|f\|_{W_{p,V}^{1}}= \|f\|_{p}+\|\,|\nabla f|\,\|_{p}+ \|Vf\|_{p}.
$$
\end{dfn}
\begin{dfn} We denote by $W_{\infty,V}^{1}(M)=W_{\infty,V}^{1}$ the space of all bounded Lipschitz functions $f$ on $M$ with $\|Vf\|_{\infty}<\infty$.
\end{dfn}  
We have the following interpolation theorem for the non-homogeneous Sobolev spaces $W_{p,V}^{1}$:
\begin{thm}\label{IS}
Let $M$ be a complete Riemannian manifold satisfying  a local doubling property $(D_{loc})$. Let $V\in RH_{qloc}$ for some $1< q\leq\infty$. Assume that $M$ admits a local Poincar\'{e} inequality $(P_{sloc})$ for some $1\leq s<q$. Then for $1\leq r\leq s <p <q$, $W_{p,V}^{1}$ is a real interpolation space between $W_{r,V}^{1}$ and  $W_{q,V}^{1}$.
\end{thm}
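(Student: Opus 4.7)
The plan is to prove Theorem \ref{IS} by directly estimating the $K$-functional $K(f,t) := K(f, t; W^1_{r,V}, W^1_{q,V})$, aiming for the two-sided equivalence
\[
\|f\|_{W^1_{p,V}} \sim \biggl( \int_0^\infty \bigl( t^{-\theta} K(f,t) \bigr)^p \,\frac{dt}{t} \biggr)^{1/p},
\]
where $\theta \in (0,1)$ is determined by $1/p = (1-\theta)/r + \theta/q$. One direction, $(W^1_{r,V}, W^1_{q,V})_{\theta,p} \hookrightarrow W^1_{p,V}$, is elementary: for any decomposition $f = f_1 + f_2$ into the endpoint spaces, the triangle inequality applied to each of $\|\cdot\|_p$, $\||\nabla\cdot|\|_p$, and $\|V\cdot\|_p$ shows that the $L^r$--$L^q$ $K$-functional of each of $f$, $|\nabla f|$, and $Vf$ is bounded by $K(f,t)$. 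The classical identity $(L^r, L^q)_{\theta,p} = L^p$ then yields $L^p$-control of all three quantities with no geometric hypothesis.

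For the reverse inclusion, the technical heart is a Calder\'on--Zygmund decomposition for weighted Sobolev functions. Given $\alpha > 0$, introduce the level set
\[
\Omega_\alpha = \bigl\{ x \in M : \mathcal{M}_{loc}\bigl( |\nabla f|^s + (Vf)^s + |f|^s \bigr)(x) > \alpha^s \bigr\},
\]
with $\mathcal{M}_{loc}$ the local Hardy--Littlewood maximal function, cover $\Omega_\alpha$ by a Whitney family $\{B_i\}$ of bounded overlap using $(D_{loc})$, and let $\{\chi_i\}$ be a partition of unity subordinate to an enlargement $\{\kappa B_i\}$. Writing
\[
f = g + \sum_i b_i, \qquad b_i = \Bigl( f - \aver{B_i} f \Bigr)\chi_i, \qquad g = f\,\ind_{\Omega_\alpha^c} + \sum_i \Bigl( \aver{B_i} f \Bigr)\chi_i,
\]
the goal is to prove that $b := \sum_i b_i$ lies in $W^1_{r,V}$ with norm of order $\alpha^{1 - p/r} \|f\|_{W^1_{p,V}}^{p/r}$ and that $g$ lies in $W^1_{q,V}$ (or $W^1_{\infty,V}$ when $q = \infty$) with the corresponding $L^\infty$ and support bounds. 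Tuning $\alpha = \alpha(t)$ then gives a $K$-functional estimate with the right distributional behaviour in $t$.

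The main obstacle is the $W^1_{r,V}$-bound on $b$. For the gradient component, one invokes $(P_{s,loc})$ on each $B_i$ to convert $\aver{B_i}|f - \aver{B_i} f|^s$ into a multiple of $r(B_i)^s\aver{\kappa B_i} |\nabla f|^s$, which by the very definition of $\Omega_\alpha$ is of size $r(B_i)^s \alpha^s$. The weighted term $\|Vb\|_r$ is the delicate step: it is controlled using $V \in RH_{q,loc}$ and H\"older's inequality on each ball to pass from $V$-weighted to the unweighted averages of $Vf$ that are governed by the maximal function cutoff. The constraint $p < q$ is consumed precisely here, through the reverse H\"older exponent; the constraint $s < p$ enters through the strong $(p/s)$-boundedness of $\mathcal{M}_{loc}$ needed to estimate $|\Omega_\alpha|$ via Chebyshev. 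The good function $g$ satisfies a pointwise bound $|g| + |\nabla g| + V|g| \lesssim \alpha$ off $\Omega_\alpha$ by Lebesgue differentiation, and an analogous bound on each $\kappa B_i$ via Poincar\'e and reverse H\"older. Finally, the local nature of all hypotheses forces a separate treatment of Whitney balls of radius exceeding the locality scale; these are either absorbed into $g$ or handled by the direct $L^p$ inclusion $W^1_{p,V} \subset W^1_{r,V} + W^1_{q,V}$.
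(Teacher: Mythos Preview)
Your outline captures the overall architecture correctly, but the Calder\'on--Zygmund decomposition you describe has a genuine gap in the weighted term. With the uniform choice $b_i = (f - f_{B_i})\chi_i$, the estimate of $\int_{B_i}|Vb_i|^r\,d\mu$ produces a term of size $(V^s)_{B_i}\,|f_{B_i}|^s\,\mu(B_i)$, and there is no way to bound $(V^s)_{B_i}\,|f_{B_i}|^s$ by $C\alpha^s$ using only the maximal control $(|f|^s + |\nabla f|^s + |Vf|^s)_{\overline{B_i}} \le \alpha^s$ together with $V\in RH_{q,loc}$ and H\"older. The quantity $(V^s)_{B_i}$ is not controlled by anything in your level set, and $|f_{B_i}|\cdot V_{B_i}$ is \emph{not} dominated by $(|Vf|)_{B_i}$ in general. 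The paper closes this gap with the Fefferman--Phong inequality (Lemma~\ref{FP}), which yields
\[
\min\bigl(r_i^{-s},(V^s)_{B_i}\bigr)\,(|f|^s)_{B_i}\;\le\;C\bigl((|\nabla f|^s)_{B_i}+(|Vf|^s)_{B_i}\bigr)\;\le\;C\alpha^s,
\]
and then splits the Whitney balls into two types: \emph{type~2} balls with $(V^s)_{B_i}<r_i^{-s}$, on which your formula $b_i=(f-f_{B_i})\chi_i$ works because $(V^s)_{B_i}(|f|^s)_{B_i}\le C\alpha^s$; and \emph{type~1} balls with $(V^s)_{B_i}\ge r_i^{-s}$, on which one must take $b_i=f\chi_i$ instead, since then $r_i^{-s}(|f|^s)_{B_i}\le C\alpha^s$ handles the gradient term $f\nabla\chi_i$, while $Vb_i=Vf\chi_i$ is directly controlled. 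Without this dichotomy the $W^1_{r,V}$ bound on $b$ fails.

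A related issue appears in your treatment of $g$: for $q<\infty$ one does not get a pointwise bound $V|g|\lesssim\alpha$ on $\Omega_\alpha$ from reverse H\"older; the paper only proves $\int_{\Omega}|Vg|^q\,d\mu\le C\alpha^q\mu(\Omega)$, using $V^s\in RH_{q/s}$ and, again, Fefferman--Phong on the type~2 balls (note that $g=\sum{}^2 f_{B_i}\chi_i$ on $\Omega$, the sum running only over type~2 balls). Finally, your handling of the local case is too vague: the paper does not sort Whitney balls by radius but instead patches the decomposition through a fixed unit-scale partition of unity $(\varphi_j)$ and runs the global argument inside each $B^j$ using the relative doubling of Lemma~\ref{DB}.
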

\begin{dfn}
Let $M$ be a Riemannian manifold, $V\in A_{\infty}$. Consider for $1\leq p<\infty$, the vector space $\dot{W}_{p,V}^{1}$ of 
 distributions $ f $ such that $|\nabla f|$ and $ Vf \in L_{p}(M)$. 
 It is well known that the elements of $\dot{W}_{p,V}^{1}$ are in  $L_{p,loc} $. We equip $\dot{W}_{p,V}^{1}$  with the semi norm 
$$
\|f\|_{\dot{W}_{p,V}^{1}}=\|\,|\nabla f|\,\|_{p}+ \|Vf\|_{p}.
$$
\end{dfn}
In fact, this expression is a norm since $V\in A_{\infty}$ yields $V>0 \; \mu-a.e.$.
\begin{dfn} We denote $\dot{W}_{\infty,V}^{1}(M)=\dot{W}_{\infty,V}^{1}$ the space of all Lipschitz functions $f$ on $M$ with $\|Vf\|_{\infty}<\infty$.
\end{dfn}
For the homogeneous Sobolev spaces $\dot{W}_{p,V}^{1}$, we have
 \begin{thm}\label{IHS}  Let $M$ be a complete Riemannian manifold satisfying $(D)$. Let $V\in RH_{q}$  for some $1< q\leq\infty$ and assume that $M$ admits a Poincar\'{e} inequality $(P_{s})$ for some $1\leq s<q$. Then, for $1\leq r\leq s< p<q$, $\dot{W}_{p,V}^{1}$ is a real interpolation space between $\dot{W}_{r,V}^{1}$ and $\dot{W}_{q,V}^{1}$. 
\end{thm}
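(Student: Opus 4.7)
The plan is to adapt the approach of Theorem~\ref{IS} to the homogeneous setting by replacing $(D_{loc})$, $(P_{sloc})$ and the local reverse H\"older condition with their global counterparts and by dropping the $L^{p}$ norm of $f$ itself throughout. The whole argument reduces to two-sided bounds on the $K$-functional
\[
K(t,f) := K(t,f,\dot{W}^{1}_{r,V},\dot{W}^{1}_{q,V}) = \inf_{f=g+b}\bigl(\|g\|_{\dot{W}^{1}_{r,V}} + t\,\|b\|_{\dot{W}^{1}_{q,V}}\bigr),
\]
the target being a Holmstedt-type estimate comparing $K(t,f)$ to the non-increasing rearrangement of a suitable maximal function of $|\nabla f|+Vf$. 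Once such bounds are established, the identification $(\dot{W}^{1}_{r,V},\dot{W}^{1}_{q,V})_{\theta,p}=\dot{W}^{1}_{p,V}$ for $s<p<q$ with $\tfrac{1}{p}=\tfrac{1-\theta}{r}+\tfrac{\theta}{q}$ follows from the reiteration theorem and the $L^{p}$-boundedness of the Hardy--Littlewood maximal operator.

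The main technical device is a Calder\'on--Zygmund decomposition tailored to $\dot{W}^{1}_{p,V}$. Given $f\in \dot{W}^{1}_{p,V}$ and $\alpha>0$, I would consider
\[
\Omega_{\alpha} = \bigl\{x\in M \colon \mathcal{M}\bigl(|\nabla f|^{s}+(Vf)^{s}\bigr)(x) > \alpha^{s}\bigr\},
\]
where $\mathcal{M}$ denotes the Hardy--Littlewood maximal operator on $(M,\mu)$; the global doubling property $(D)$ then supplies a Whitney covering $\{B_{i}\}$ of $\Omega_{\alpha}$. On each ball $B_{i}$ the global Poincar\'e inequality $(P_{s})$ produces a Lipschitz approximant to $f$ whose gradient is of order $\alpha$ pointwise, while $V\in RH_{q}$ with $q>s$ upgrades the $s$-averaged control of $Vf$ provided by the definition of $\Omega_{\alpha}$ to the $q$-averaged control needed to bound the good part in the $\dot{W}^{1}_{q,V}$ norm. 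Gluing the local approximants via a Lipschitz partition of unity subordinate to $\{B_{i}\}$ yields a splitting $f=g_{\alpha}+b_{\alpha}$ satisfying
\[
\|g_{\alpha}\|_{\dot{W}^{1}_{\infty,V}} \lesssim \alpha, \qquad \|b_{\alpha}\|_{\dot{W}^{1}_{r,V}}^{r} \lesssim \int_{\Omega_{\alpha}}\bigl(|\nabla f|^{r}+(Vf)^{r}\bigr)\,d\mu.
\]
Choosing $\alpha$ appropriately in terms of $t$ gives the upper estimate for $K(t,f)$, and the matching lower bound follows from a standard weak-type argument applied to $\mathcal{M}$ and to the elementary fact that if $f=g+b$ with $g\in\dot{W}^{1}_{q,V}$ and $b\in\dot{W}^{1}_{r,V}$ then $|\nabla f|+Vf$ is controlled by the sum of the corresponding quantities for $g$ and $b$.

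The main obstacle is the simultaneous handling of $|\nabla f|$ and $Vf$ throughout this decomposition: the Poincar\'e inequality governs the gradient piece, the reverse H\"older condition on $V$ governs the potential piece, and the partition of unity must be chosen so that multiplication by its components preserves both $|\nabla g_{\alpha}|\lesssim \alpha$ and $Vg_{\alpha}\in L^{\infty}$ with comparable constants. The range $s<p<q$ is essentially forced by this scheme: $p>s$ is dictated by the Poincar\'e exponent, and $p<q$ by the reverse H\"older exponent (together with its Gehring-type self-improvement). Passing from the local framework of Theorem~\ref{IS} to the present global one is, by contrast, mostly a bookkeeping simplification: since $(D)$, $(P_{s})$ and $RH_{q}$ hold at all scales, the Whitney construction and the subsequent estimates can be carried out uniformly without truncation at a reference radius, so the zero-order term in the Sobolev norm can be discarded and the interpolation becomes genuinely homogeneous.
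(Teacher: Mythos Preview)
Your outline follows the paper's strategy---$K$-functional bounds via a Calder\'on--Zygmund decomposition applied to $\mathcal{M}(|\nabla f|^{s}+|Vf|^{s})$---but two concrete points would fail as written.

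First, the displayed bound $\|g_{\alpha}\|_{\dot{W}^{1}_{\infty,V}}\lesssim\alpha$ is only available when $V\in RH_{\infty}$. For finite $q$, $V$ need not be essentially bounded on balls, so $Vg_{\alpha}$ cannot be controlled in $L^{\infty}$; the paper instead proves only the integral estimate $\int_{\Omega_{\alpha}}\bigl(|\nabla g_{\alpha}|^{q}+|Vg_{\alpha}|^{q}\bigr)\,d\mu\lesssim\alpha^{q}\mu(\Omega_{\alpha})$ on the bad set and treats $g_{\alpha}=f$ on $\Omega_{\alpha}^{c}$ separately via a Holmstedt-type remainder term. Your earlier sentence about ``$q$-averaged control'' is correct; the displayed $L^{\infty}$ bound contradicts it and must be replaced by this weaker $L^{q}$ estimate, which then requires the extra work in the $K$-functional upper bound (the $\int_{t^{qr/(q-r)}}^{\infty}(\mathcal{M}\dot{T}_{s}f)^{*q/s}\,du$ term in Theorem~\ref{EKH}).

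Second, and more seriously, you never invoke the Fefferman--Phong inequality (Lemma~\ref{FP}), which is the device linking the potential and gradient pieces. In the homogeneous case $\Omega_{\alpha}$ gives no direct control on $|f|$, so to estimate $\int_{B_{i}}|Vb_{i}|^{s}\,d\mu$ and $\int_{B_{i}}|f\nabla\chi_{i}|^{s}\,d\mu$ one needs
\[
\int_{B_{i}}\bigl(|\nabla f|^{s}+|Vf|^{s}\bigr)\,d\mu\;\gtrsim\;\min\bigl(r_{i}^{-s},(V^{s})_{B_{i}}\bigr)\int_{B_{i}}|f|^{s}\,d\mu.
\]
This forces a dichotomy: on type~1 balls ($(V^{s})_{B_{i}}\ge r_{i}^{-s}$) one sets $b_{i}=f\chi_{i}$, on type~2 balls ($(V^{s})_{B_{i}}<r_{i}^{-s}$) one sets $b_{i}=(f-f_{B_{i}})\chi_{i}$; a uniform ``subtract the average on each Whitney ball'' construction does not close, because the term $V^{s}_{B_{i}}|f_{B_{i}}|^{s}$ in $\int|Vb_{i}|^{s}$ is uncontrolled without Fefferman--Phong. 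This split is also what makes $\int_{\Omega_{\alpha}}|Vg_{\alpha}|^{q}\,d\mu$ estimable, since only type~2 balls contribute to $g_{\alpha}$ on $\Omega_{\alpha}$ and there $V^{s}_{B_{i}}(|f|^{s})_{B_{i}}\lesssim\alpha^{s}$.
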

It is known that if $V\in RH_{q}$ then $V+1\in RH_{q}$ with comparable constants. Hence part of Theorem \ref{IS} can be seen as a corollary of Theorem \ref{IHS}. But the fact that $V+1$ is bounded away from $0$ also allows local assumptions in Theorem \ref{IS}, which is why we distinguish in this way the non-homogeneous and the homogeneous case.

The proof of Theorem \ref{IS} and Theorem \ref{IHS} is done by estimating the $K$-functional of interpolation. We were not able to obtain a characterization of the $K$-functional. However, this suffices for our needs. When $q=\infty$ (for example if $V$ is a positive polynomial on $\mathbb{R}^{n}$) and $r=s$, then there is a characterization.
The key tools to estimate the $K$-functional  will be a Calder\'{o}n-Zygmund decomposition for Sobolev functions and the Fefferman-Phong inequality (see section 3).
 
We end this introduction with a plan of the paper. In section 2, we review the notions of doubling property, Poincar\'{e} inequality, Reverse H\"{o}lder classes as well as the real $K$ interpolation method. At the end of this section, we summarize some properties for the Sobolev spaces defined above under some additional hypotheses on $M$ and $V$. Section 3 is devoted to give the main tools: the Fefferman-Phong inequality and a Calder\'{o}n-Zygmund decomposition adapted to our Sobolev spaces. In section 4, we estimate the $K$-functional of real interpolation for non-homogeneous Sobolev spaces in two steps: first of all for the global case and secondly for the local case. We interpolate and get Theorem \ref{IS} in section 5. Section 6 concerns the proof of Theorem \ref{IHS}. Finally, in section 7, we apply our interpolation result to the case of Lie groups with an appropriate definition of $W_{p,V}^{1}$. 
\\
\\
\textit{Acknowledgements.} I thank my Ph.D advisor P. Auscher for the useful discussions on the topic of this paper.
\
\section{Preliminaries}
Throughout this paper we write $\ind_{E}$  for the characteristic function of a set $E$ and $E^{c}$  for the complement of $E$. For a ball $B$ in a metric space, $\lambda B$  denotes the ball co-centered with $B$ and with radius $\lambda$ times that of $B$. Finally, $C$ will be a constant
that may change from an inequality to another and we will use $u\sim
v$ to say that there exist two constants $C_{1}$, $C_{2}>0$ such that $C_{1}u\leq v\leq
C_{2}u$. Let $M$ denotes a complete non-compact Riemannian manifold. We write $\mu$ for the Riemannian measure on $M$, $\nabla$ for the Riemannian gradient, $|\cdot|$ for the length on the tangent space (forgetting the subscript $x$ for simplicity) and $\|\cdot\|_{p}$ for the norm on $ L_{p}(M,\mu)$, $1 \leq p\leq +\infty.$
\subsection{The doubling property and Poincar\'{e} inequality}
\begin{dfn} Let $(M,d,\mu)$ be a Riemannian manifold. Denote by $B(x, r)$ the open ball of center $x\in M $ and radius $r>0$. One says that $M$ satisfies the  local doubling property $(D_{loc})$ if there exist constants $r_{0}>0$, $0<C=C(r_{0})<\infty$, such that for all $x\in M,\, 0<r< r_{0} $ we have
\begin{equation*}\tag{$D_ {loc}$}
\mu(B(x,2r))\leq C \mu(B(x,r)).
\end{equation*}
Furthermore, $M$ satisfies a global doubling property or simply doubling property $(D)$ if one can take $r_{0}=\infty$.
We also say that $\mu$ is a locally (resp. globally) doubling Borel measure.
\end{dfn}
\noindent Observe that if $M$ satisfies $(D)$ then
$$
 \diam(M)<\infty\Leftrightarrow\,\mu(M)<\infty\; \textrm{(\cite{ambrosio1})}. 
$$
\begin{thm}[Maximal theorem]\label{MIT}(\cite{coifman2})
Let $M$ be a Riemannian manifold satisfying $(D)$. Denote by $\mathcal{M}$ the uncentered Hardy-Littlewood maximal function over open balls of $M$ defined by
 $$
 \mathcal{M}f(x)=\underset{B:x\in B}{\sup}|f|_{B}
 $$ 
 where $ \displaystyle f_{E}:=\aver{E}f d\mu:=\frac{1}{\mu(E)}\int_{E}f d\mu.$
Then

\begin{itemize}
\item[1.] $\mu(\left\lbrace x:\,\mathcal{M}f(x)>\lambda\right\rbrace)\leq \frac{C}{\lambda}\int_{X}|f| d\mu$ for every $\lambda>0$;
\item[2.] $\|\mathcal{M}f\|_{p}\leq C_{p} \|f\|_{p}$, for $1<p\leq\infty$.
\end{itemize}
\end{thm}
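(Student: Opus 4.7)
The plan is to establish the three regimes in order: first the trivial endpoint $p=\infty$, then the weak-type $(1,1)$ inequality by a Vitali covering argument powered by $(D)$, and finally the strong-type $(p,p)$ for $1<p<\infty$ by Marcinkiewicz interpolation between these two endpoints. This is the standard Coifman--Weiss scheme on a doubling metric measure space, so the task is essentially to verify that the hypotheses of the theorem supply what that scheme needs.

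The $p=\infty$ bound is immediate: for any ball $B\ni x$ we have $|f|_B\leq\|f\|_\infty$, hence $\mathcal{M}f(x)\leq\|f\|_\infty$. For the weak $(1,1)$ bound, fix $\lambda>0$ and set $E_\lambda=\{x\in M:\mathcal{M}f(x)>\lambda\}$. For each $x\in E_\lambda$ there exists, by definition of the uncentered maximal function, a ball $B_x\ni x$ with
$$
\mu(B_x)<\frac{1}{\lambda}\int_{B_x}|f|\,d\mu\leq\frac{\|f\|_1}{\lambda}.
$$
After first reducing to $f\in L_1\cap L_\infty$ of compact support, which is dense in $L_1$, the measures $\mu(B_x)$ are uniformly bounded, and because $(M,d,\mu)$ is doubling with $\mu(M)=\infty$ (otherwise the theorem is trivial by truncation), the radii of the $B_x$ are uniformly bounded as well. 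This is exactly the hypothesis required to run the $5r$-covering (Vitali) lemma on a metric space: one extracts a countable pairwise disjoint subfamily $\{B_{x_i}\}_i$ such that $E_\lambda\subset\bigcup_i 5B_{x_i}$. Iterating $(D)$ a fixed number of times yields $\mu(5B_{x_i})\leq C\mu(B_{x_i})$, and therefore
$$
\mu(E_\lambda)\leq\sum_i\mu(5B_{x_i})\leq C\sum_i\mu(B_{x_i})\leq\frac{C}{\lambda}\sum_i\int_{B_{x_i}}|f|\,d\mu\leq\frac{C}{\lambda}\|f\|_1,
$$
where disjointness of $\{B_{x_i}\}$ is used in the last step.

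For $1<p<\infty$, I would invoke the Marcinkiewicz interpolation theorem between the weak-type $(1,1)$ estimate just proved and the strong-type $(\infty,\infty)$ bound, which yields the desired $\|\mathcal{M}f\|_p\leq C_p\|f\|_p$ with constant blowing up like $p/(p-1)$ near $p=1$. The only genuine obstacle in the whole argument is the Vitali covering step, which requires the balls in question to have uniformly bounded radii; this is where $(D)$ plus the a priori control $\mu(B_x)\leq\|f\|_1/\lambda$ does the work after reducing to a dense class of $f$. Everything else is bookkeeping, and since the result is classical on spaces of homogeneous type, it is reasonable to simply quote \cite{coifman2} once the framework is set up.
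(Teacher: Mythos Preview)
The paper does not supply a proof of this statement: Theorem~\ref{MIT} is quoted as a known result from \cite{coifman2} in the Preliminaries section, with no argument given. Your proposal reproduces the classical Coifman--Weiss scheme (trivial $L_\infty$ bound, Vitali $5r$-covering for the weak $(1,1)$, Marcinkiewicz interpolation for $1<p<\infty$), which is exactly the content of that reference, so in substance you are aligned with what the paper relies on.

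One small point worth tightening: your reduction to $f\in L_1\cap L_\infty$ with compact support, combined with $\mu(B_x)\leq\|f\|_1/\lambda$, does not by itself force the radii of the $B_x$ to be uniformly bounded on a general doubling space (a bound on $\mu(B_x)$ controls radii only once you also pin down where the centers can lie). The cleanest route is to first pass to an arbitrary compact subset $K\subset E_\lambda$ by inner regularity, cover $K$ by finitely many of the $B_x$, and apply the finite Vitali selection there; letting $K\uparrow E_\lambda$ gives the estimate. This is a routine fix and does not affect the correctness of your overall approach.
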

\subsection{Poincar\'{e} inequality}
\begin{dfn}[Poincar\'{e} inequality on $M$] Let $M$ be a complete Riemannian manifold, $1\leq s<\infty$. We say that $M$ admits \textbf{a local Poincar\'{e} inequality $(P_{sloc})$} if there exist constants $r_{1}>0,\,C=C(r_{1})>0$ such that, for every function $f\in C^{\infty}_{0}$, and every ball $B$ of $M$ of radius $0<r<r_{1}$, we have
\begin{equation*}\tag{$P_{sloc}$}
\aver{B}|f-f_{B}|^{s} d\mu \leq C r^{s} \aver{B}|\nabla f|^{s}d\mu.
\end{equation*}
$M$ admits a global Poincar\'{e} inequality $(P_{s})$ if we can take $r_{1}=\infty$ in this definition.\\
\end{dfn}
\begin{rem} By density of $C_{0}^{\infty}$ in $W_{s}^{1}$, if $(P_{sloc})$ holds for every function $f\in C_{0}^{\infty}$, then it holds for every $f\in W_{s}^{1}$.
\end{rem}
 Let us recall some known facts about Poincar\'{e} inequality with varying $q$. It is known that $(P_{qloc})$ implies $(P_{ploc})$ when $p\geq q$ (see \cite{hajlasz4}). Thus, if the set of  $q$ such that $(P_{qloc})$ holds is not empty, then it is an interval unbounded on the right. A recent result from Keith-Zhong \cite{keith3} asserts that this interval is open in $[1,+\infty[$ in the following sense:
\begin{thm}\label{kz} Let $(X,d,\mu)$ be a complete metric-measure space with $\mu$ locally doubling
and admitting a local Poincar\'{e} inequality $(P_{qloc})$, for  some $1< q<\infty$.
Then there exists $\epsilon >0$ such that $(X,d,\mu)$ admits
$(P_{ploc})$ for every $p>q-\epsilon$ (see \cite{keith3} and section 4 in \cite{badr1}). 
\end{thm}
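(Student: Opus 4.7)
The plan is to follow the Keith--Zhong self-improvement strategy: under doubling and a Poincar\'e inequality $(P_{qloc})$ with $1 < q < \infty$, I want to show that $(P_{ploc})$ holds for all $p > q-\epsilon$ for some small $\epsilon > 0$. Three ingredients organize the argument: a pointwise Hajlasz-type reformulation of $(P_{qloc})$, a truncation of a maximal-function level set together with a Whitney-type decomposition, and a good-$\lambda$ inequality that produces the improved exponent.

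First, I would convert $(P_{qloc})$ into a pointwise estimate. Under $(D_{loc})$, it is equivalent to the existence of a Hajlasz-type ``gradient'' $g$ (comparable to $|\nabla f|$ via maximal function arguments) with
$$|f(x) - f(y)| \leq C d(x,y) \bigl( (\mathcal{M} g^q(x))^{1/q} + (\mathcal{M} g^q(y))^{1/q} \bigr)$$
for $x,y$ in any ball of radius less than $r_1$. This pointwise inequality is the right framework in which to deform the exponent, because it decouples the geometry (the Lipschitz factor $d(x,y)$) from the $L^q$ structure (the maximal function of $g^q$).

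Second, fix a ball $B \subset M$ and a threshold $\lambda > 0$, and split $B$ into $G_\lambda = \{\mathcal{M} g^q \leq \lambda^q\} \cap B$ and $E_\lambda = B \setminus G_\lambda$. On $G_\lambda$ the pointwise inequality forces $f|_{G_\lambda}$ to be $C\lambda$-Lipschitz, so that $|f - f_B| \lesssim \lambda\, r(B)$ there. On $E_\lambda$ I would take a Whitney-type covering $\{B_j\}$ and re-apply $(P_{qloc})$ on each $B_j$, whose dilation is controlled by the doubling constant. Writing
$$\aver{B} |f - f_B|^p\, d\mu = p \int_0^\infty \lambda^{p-1}\, \frac{\mu(\{x\in B : |f(x) - f_B| > C\lambda\})}{\mu(B)}\, d\lambda,$$
I would split the $\lambda$-integral into the regimes corresponding to $G_\lambda$ and $E_\lambda$, turning the $(P_{qloc})$ applied at all Whitney scales into a good-$\lambda$ inequality. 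The gain $\epsilon>0$ comes from a geometric summation whose convergence is governed by the doubling dimension, provided $p$ is chosen close enough to $q$ so that one has a power of $\lambda$ with favorable sign.

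The main obstacle is the Whitney-covering step in the metric setting. On a general complete Riemannian manifold one must produce a family of balls with bounded overlap whose $5$-fold (or larger) dilates remain within the radius $r_1$ where $(P_{qloc})$ is available, while keeping all constants independent of $x$ and of the outer scale $r(B)$. Running the iteration at dyadic levels $\lambda_k = 2^k \lambda_0$ and summing demands a careful choice of the Whitney parameter so that the resulting series in $k$ converges uniformly and produces a strictly positive $\epsilon$; this iterative bookkeeping is where the delicate part of the Keith--Zhong proof sits, and is the step I would expect to be hardest to make rigorous in the manifold setting.
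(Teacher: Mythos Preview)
The paper does not prove this theorem at all: it is stated as a known result and simply referenced to Keith--Zhong \cite{keith3} (with the local version discussed in \cite{badr1}), so there is no ``paper's own proof'' against which to compare your attempt.

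That said, your outline is a faithful high-level description of the Keith--Zhong strategy: the pointwise Haj\l asz-type reformulation via maximal functions, the level-set/Whitney decomposition, and the good-$\lambda$ iteration are indeed the main ingredients. What you have written is a sketch rather than a proof --- the actual Keith--Zhong argument is considerably more delicate in the iteration step, and your own closing paragraph correctly identifies that the bookkeeping of the Whitney parameters and the summation producing a strictly positive $\epsilon$ is the heart of the matter. For the purposes of this paper, however, no such proof is expected: the theorem is quoted, not established, and is used only as background motivation for the fact that the set of admissible Poincar\'e exponents is open.
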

\subsection{Reverse H\"{o}lder classes}
\begin{dfn} Let $M$ be a Riemannian manifold. A weight $w$ is a non-negative locally integrable function on $M$. The reverse H\"{o}lder classes are defined in the following way: $w\in RH_{q},\,1<q<\infty$, if there exists a constant $C$ such that for every ball $B\subset M$
\begin{equation}\label{rhq}
\left(\aver{B}w^{q}d\mu\right)^{\frac{1}{q}}\leq C\aver{B}wd\mu.
\end{equation}
The endpoint $q=\infty$ is given by the condition: $w\in RH_{\infty}$ whenever, for any ball $B$,
\begin{equation}\label {rhi}
w(x)\leq C\aver{B}w \quad \textrm{for } \mu-a.e. \;x\in B.
\end{equation}
We say that $w\in RH_{qloc}$ for some $1<q< \infty$ (resp. $q=\infty$) if  there exists $r_{2}>0$ such that (\ref{rhq}) (resp. (\ref{rhi})) holds for all balls $B$ of radius $0<r<r_{2}$.
\\
The smallest  $C$ is called the $RH_{q}$ (resp. $RH_{qloc}$) constant of $w$.
\end{dfn}
\begin{prop}\label{CRO}
\begin{itemize}
\item[1.] $RH_{\infty}\subset RH_{q}\subset RH_{p}$ for $1<p\leq q\leq \infty$.
\item[2.] If $w\in RH_{q}$, $1<q<\infty$, then there exists $q<p<\infty$ such that $w\in RH_{p}$.
\item[3.] $A_{\infty}=\bigcup_{1<q\leq \infty} RH_{q}$. 
\end{itemize}
\end{prop}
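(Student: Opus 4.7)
The plan is to handle the three parts in order, using standard arguments for reverse Hölder classes adapted to the doubling metric setting.

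For part (1), I would argue directly from the definitions. The inclusion $RH_\infty\subset RH_q$ is immediate: if $w(x)\leq C\fint_B w\, d\mu$ for $\mu$-a.e.\ $x\in B$, then $(\fint_B w^q\, d\mu)^{1/q}\leq C\fint_B w\, d\mu$. For $RH_q\subset RH_p$ with $1<p\leq q$, Hölder's inequality applied to the normalized measure $d\mu/\mu(B)$ gives
\[
\left(\aver{B}w^p\, d\mu\right)^{1/p}\leq \left(\aver{B}w^q\, d\mu\right)^{1/q},
\]
and combining with the $RH_q$ bound yields the $RH_p$ bound with the same constant.

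For part (2), this is the classical Gehring self-improvement, and it is the main obstacle. The strategy is a Calderón-Zygmund stopping-time argument on balls: fix a ball $B_0$ and, for $\lambda>\fint_{B_0}w\, d\mu$, use a Vitali/Whitney-type decomposition of the upper level set $\{x\in B_0:\mathcal{M}_{B_0}w(x)>\lambda\}$ to extract a family of balls $B_j$ on which $w$ averages to something comparable to $\lambda$ (with constants depending on the doubling constant of $\mu$) and outside of which $w\leq\lambda$ a.e. Then the $RH_q$ hypothesis on each $B_j$ allows one to control $\int_{B_j}w^q\, d\mu$ by $C\lambda^{q-1}\int_{B_j}w\, d\mu$, which translates into a good-$\lambda$ inequality of the form
\[
\int_{\{w>A\lambda\}\cap B_0}w^q\, d\mu\leq \theta \int_{\{w>\lambda\}\cap B_0}w^q\, d\mu + (\text{lower order}),
\]
for suitable $A>1$ and $\theta<1$. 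Multiplying by $\lambda^{\epsilon-1}$ and integrating in $\lambda$ over $(\fint_{B_0}w,\infty)$, one absorbs the main term when $\epsilon$ is small enough and concludes that $\fint_{B_0}w^{q+\epsilon}\, d\mu\leq C(\fint_{B_0}w\, d\mu)^{q+\epsilon}$, i.e.\ $w\in RH_{q+\epsilon}$. The delicate point is ensuring that the Calderón-Zygmund decomposition and the absorption argument carry over in the metric setting; this is standard once $(D)$ is available, but the constants must be tracked carefully.

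For part (3), one inclusion is easy: any $w\in RH_q$ is an $A_\infty$ weight, since the reverse Hölder inequality directly implies the quantitative absolute-continuity characterization of $A_\infty$ (for any $\alpha\in(0,1)$ there exists $\beta\in(0,1)$ such that $|E|\leq \alpha|B|\Rightarrow w(E)\leq\beta w(B)$, which follows from Hölder and the $RH_q$ bound). Conversely, the classical Coifman-Fefferman theorem states that any $A_\infty$ weight satisfies a reverse Hölder inequality with some exponent $q>1$; the proof is again a Calderón-Zygmund/good-$\lambda$ argument parallel to the one in part (2). Thus $A_\infty=\bigcup_{1<q\leq\infty}RH_q$.
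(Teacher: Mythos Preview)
Your proposal is correct and in fact far more detailed than what the paper does: the paper's entire proof of this proposition is the single sentence ``These properties are standard, see for instance \cite{garcia}.'' No argument is given; the three items are treated as well-known facts about Muckenhoupt and reverse H\"older classes and deferred to a standard reference.

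What you have written is exactly the content one would find in such a reference: H\"older for the nesting in (1), Gehring's self-improvement via a Calder\'on--Zygmund/good-$\lambda$ argument for (2), and the Coifman--Fefferman characterization of $A_\infty$ for (3). So there is no genuine difference in mathematical route---you are simply supplying the standard proofs that the paper chose to cite rather than reproduce. The only caveat is that you should make explicit (as you partially do) that the Gehring and Coifman--Fefferman arguments go through in the doubling metric-measure setting, since the paper is set on Riemannian manifolds satisfying $(D)$ rather than on $\mathbb{R}^n$; this is indeed routine once doubling is available, but worth a sentence.
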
 
\begin{proof}
These properties are standard, see for instance \cite{garcia}.  
\end{proof}
\begin{prop}\label{CR}(see section 11 in \cite{auscher3}, \cite{johnson})
Let $V$ be a non-negative measurable function. Then the following properties are equivalent:
\begin{itemize}
\item[1.] $V\in A_{\infty}$.
\item[2.] For all $r\in]0,1[,\,V^{r}\in RH_{\frac{1}{r}}$.
\item[3.] There exists $r\in ]0,1[,\,V^{r}\in RH_{\frac{1}{r}}$.
\end{itemize}
\end{prop}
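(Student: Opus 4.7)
The plan is to close the cycle $(2) \Rightarrow (3) \Rightarrow (1) \Rightarrow (2)$, making repeated use of the standard reverse-Jensen characterization of $A_\infty$ (see \cite{garcia}): a nonnegative locally integrable weight $w$ belongs to $A_\infty$ if and only if there exists $C>0$ such that $\aver{B} w\, d\mu \leq C \exp\bigl(\aver{B} \log w\, d\mu\bigr)$ for every ball $B$. The implication $(2) \Rightarrow (3)$ is trivial.

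For $(1) \Rightarrow (2)$ I fix $r \in (0,1)$ and combine two inequalities. The reverse-Jensen inequality for $V$, raised to the $r$-th power, reads $(\aver{B} V\, d\mu)^r \leq C^r \exp\bigl(r\, \aver{B} \log V\, d\mu\bigr) = C^r \exp\bigl(\aver{B} \log V^r\, d\mu\bigr)$. Ordinary Jensen applied to the convex function $\exp$ gives $\exp\bigl(\aver{B} \log V^r\, d\mu\bigr) \leq \aver{B} V^r\, d\mu$. Chaining the two, $(\aver{B} V\, d\mu)^r \leq C^r \aver{B} V^r\, d\mu$, which is exactly the definition of $V^r \in RH_{1/r}$.

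For $(3) \Rightarrow (1)$, I start from $V^r \in RH_{1/r}$, i.e.\ $(\aver{B} V\, d\mu)^r \leq C \aver{B} V^r\, d\mu$. Since $RH_{1/r} \subset A_\infty$ by Proposition~\ref{CRO}(3), the weight $V^r$ itself satisfies the reverse-Jensen inequality: $\aver{B} V^r\, d\mu \leq C' \exp\bigl(\aver{B} \log V^r\, d\mu\bigr) = C' \exp\bigl(r\, \aver{B} \log V\, d\mu\bigr)$. Chaining these two bounds yields $(\aver{B} V\, d\mu)^r \leq CC' \exp\bigl(r\, \aver{B} \log V\, d\mu\bigr)$; taking $r$-th roots produces reverse Jensen for $V$, hence $V \in A_\infty$.

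No step is delicate once the reverse-Jensen characterization of $A_\infty$ is available; the only subtlety is in the direction $(3) \Rightarrow (1)$, where one must pull an $A_\infty$ property proved for the power $V^r$ back to $V$. This is achieved precisely by combining the $RH_{1/r}$ hypothesis with reverse Jensen for $V^r$ \emph{before} taking $r$-th roots, which is what makes the constants track correctly through the power change.
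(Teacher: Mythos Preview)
The paper does not supply its own proof of this proposition; it simply records the statement with citations to section~11 of \cite{auscher3} and to \cite{johnson}. Your argument via the reverse-Jensen (exponential) characterization of $A_\infty$ is correct and self-contained, and is one of the standard routes to the equivalence; in particular your handling of $(3)\Rightarrow(1)$ --- passing from $V^r\in RH_{1/r}\subset A_\infty$ to reverse Jensen for $V^r$, combining with the $RH_{1/r}$ inequality, and then taking $r$-th roots --- is exactly the right way to transfer the $A_\infty$ condition back from $V^r$ to $V$.
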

\begin{rem}
Propositions \ref{CRO} and \ref{CR} still hold in the local case, that is, when the weights are considered in a local reverse H\"{o}lder class $RH_{qloc}$ for some $1<q\leq \infty$.
\end{rem} 
\subsection{The $K$ method of real interpolation} The reader is referred to \cite{bennett}, \cite{bergh} for details on the development of this theory. Here we only recall the essentials to be used in the sequel. 

Let $A_{0}$, $A_{1}$ be  two normed vector spaces embedded in a topological Hausdorff vector space $V$, and define for $a\in A_{0}+A_{1}$ and $t>0$,
$$
K(a,t,A_{0},A_{1})=\displaystyle \inf_{a
=a_{0}+a_{1}}(\| a_{0}\|_{A_{0}}+t\|
a_{1}\|_{A_{1}}).
$$

For $0<\theta< 1$, $1\leq q\leq \infty$, we denote by $(A_{0},A_{1})_{\theta,q}$ the interpolation space between $A_{0}$ and $A_{1}$:
\begin{displaymath}
	(A_{0},A_{1})_{\theta,q}=\left\lbrace a \in A_{0}+A_{1}:\|a\|_{\theta,q}=\left(\int_{0}^{\infty}(t^{-\theta}K(a,t,A_{0},A_{1}))^{q}\,\frac{dt}{t}\right)^{\frac{1}{q}}<\infty\right\rbrace.
\end{displaymath}
It is an exact interpolation space of exponent $\theta$ between $A_{0}$ and $A_{1}$, see \cite{bergh} Chapter II.
\begin{dfn}
Let $f$  be a measurable function on a measure space $(X,\mu)$. We denote by $f^{*}$ its decreasing rearrangement function: for every $t>0$,
$$
f^{*}(t)=\inf \left\lbrace\lambda :\, \mu (\left\lbrace x:\,|f(x)|>\lambda\right\rbrace)\leq
t\right\rbrace.
$$
We denote by  $f^{**}$ the maximal decreasing rearrangement of
$f$: for every $t>0$,
$$
f^{**}(t)=\frac{1}{t}\int_{0}^{t}f^{*}(s) ds.
$$
\end{dfn}
 It is known that $(\mathcal{M}f)^{*}\sim f^{**}$ and $\mu (\left\lbrace x:\, |f(x)|>f^{*}(t)\right\rbrace)\leq t$ for all $t>0$.
We refer to \cite{bennett}, \cite{bergh}, \cite{calderon2} for other properties of $f^{*}$ and $f^{**}$.

To end with this subsection let us quote the following theorem (\cite{holmstedt}):
\begin{thm}\label{H} Let $(X,\mu)$ be a measure space where $\mu$ is a non-atomic 
positive measure. Take $0<p_{0}<p_{1}<\infty$. Then
$$
K(f,t,L_{p_{0}},L_{p_{1}})\sim\left(\int_{0}^{t^{\alpha}}(f^{*}(u))^{p_{0}}du\right)^\frac{1}{p_{0}}+t\left(\int_{t^{\alpha}}^{\infty}(f^{*}(u))^{p_{1}}du\right)^\frac{1}{p_{1}},
$$
where $\frac{1}{\alpha}=\frac{1}{p_{0}}-\frac{1}{p_{1}}$.
\end{thm}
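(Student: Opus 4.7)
The plan is to establish both inequalities $K(f,t,L_{p_{0}},L_{p_{1}}) \lesssim R(f,t) \lesssim K(f,t,L_{p_{0}},L_{p_{1}})$, where $R(f,t)$ denotes the right-hand side. Setting $\alpha=(1/p_{0}-1/p_{1})^{-1}$, the argument will rely systematically on the three algebraic identities $\alpha/p_{0} = 1 + \alpha/p_{1}$, $\alpha(1-p_{0}/p_{1}) = p_{0}$, and $\alpha(1-p_{1}/p_{0}) = -p_{1}$, all immediate from the definition of $\alpha$.

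For the upper bound, I would exhibit an explicit decomposition $f = f_{0}+f_{1}$ by truncation at level $\lambda = f^{*}(t^{\alpha})$: set $f_{0} = (|f|-\lambda)_{+}\,\mathrm{sgn}(f)$ and $f_{1} = f - f_{0}$. Using non-atomicity of $\mu$ to choose $\lambda$ so that $\mu(\{|f|>\lambda\})=t^{\alpha}$, a direct distribution-function computation gives $f_{0}^{*}(s) = (f^{*}(s)-\lambda)_{+}$ and $f_{1}^{*}(s) = \min(f^{*}(s),\lambda)$, whence $\|f_{0}\|_{p_{0}}^{p_{0}} \leq \int_{0}^{t^{\alpha}}(f^{*})^{p_{0}}$ and $\|f_{1}\|_{p_{1}}^{p_{1}} \leq t^{\alpha}\lambda^{p_{1}} + \int_{t^{\alpha}}^{\infty}(f^{*})^{p_{1}}$. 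The boundary contribution $t(t^{\alpha}\lambda^{p_{1}})^{1/p_{1}} = t^{\alpha/p_{0}}f^{*}(t^{\alpha})$ is dominated by $\bigl(\int_{0}^{t^{\alpha}}(f^{*})^{p_{0}}\bigr)^{1/p_{0}}$, since $f^{*}$ is decreasing and $\alpha/p_{0}=1+\alpha/p_{1}$; thus $K(f,t,L_{p_{0}},L_{p_{1}}) \lesssim R(f,t)$.

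For the lower bound, let $f = g_{0}+g_{1}$ be any admissible decomposition. Calder\'on's subadditivity of the decreasing rearrangement yields $f^{*}(2u)\leq g_{0}^{*}(u)+g_{1}^{*}(u)$. Splitting $\int_{0}^{t^{\alpha}}(f^{*})^{p_{0}}$ accordingly, the diagonal piece is controlled by $\|g_{0}\|_{p_{0}}^{p_{0}}$, while H\"older's inequality with exponent $p_{1}/p_{0}$ bounds the cross-term $\int_{0}^{t^{\alpha}/2}(g_{1}^{*})^{p_{0}}$ by $(t^{\alpha}/2)^{1-p_{0}/p_{1}}\|g_{1}\|_{p_{1}}^{p_{0}}$, which equals $C\,t^{p_{0}}\|g_{1}\|_{p_{1}}^{p_{0}}$ by $\alpha(1-p_{0}/p_{1})=p_{0}$. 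Symmetrically, $\int_{t^{\alpha}}^{\infty}(f^{*})^{p_{1}}$ yields a piece bounded by $\|g_{1}\|_{p_{1}}^{p_{1}}$ and a cross-term $\int_{t^{\alpha}/2}^{\infty}(g_{0}^{*})^{p_{1}}$ that Chebyshev's estimate $g_{0}^{*}(u)\leq u^{-1/p_{0}}\|g_{0}\|_{p_{0}}$ combined with $\alpha(1-p_{1}/p_{0})=-p_{1}$ controls by $C\,t^{-p_{1}}\|g_{0}\|_{p_{0}}^{p_{1}}$. Collecting everything and infimizing over decompositions gives $R(f,t)\lesssim K(f,t,L_{p_{0}},L_{p_{1}})$.

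The proof is not deep, but its success hinges entirely on the fact that $\alpha$ is defined precisely so that the three exponent identities hold: they ensure that the boundary term in the upper bound and the two cross-terms in the lower bound all carry exactly the right powers of $t$ to be absorbed into the target expression. The main obstacle, if any, is simply keeping the bookkeeping of exponents straight; non-atomicity of $\mu$ is used only to pick $\lambda$ cleanly in the upper-bound decomposition, and in the general case one would perturb $\lambda$ by a limiting argument.
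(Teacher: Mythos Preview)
Your argument is correct and is essentially the classical proof due to Holmstedt. Note, however, that the paper does not supply its own proof of this statement: Theorem~\ref{H} is quoted from \cite{holmstedt} as a known result and used as a black box in the estimation of the $K$-functional for the Sobolev spaces. There is therefore no in-paper proof to compare against; what you have written is precisely the standard truncation/subadditivity argument that underlies the cited reference.
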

\
\subsection{Sobolev spaces associated to a weight $V$}
For the definition of the non-homogeneous Sobolev spaces $W_{p,V}^{1}$ and the homogeneous one $\dot{W}_{p,V}^{1}$ see the introduction. We begin showing that $W_{\infty,V}^{1}$ and $\dot{W}_{p,V}^{1}$ are Banach spaces.
\begin{prop}$W_{\infty,V}^{1}$ equipped with  the norm
$$
\|f\|_{W_{\infty,V}^{1}}= \|f\|_{\infty}+\|\,|\nabla f|\,\|_{\infty}+ \|Vf\|_{\infty}
$$
is a Banach space.
\end{prop}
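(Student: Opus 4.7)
The plan is to check the norm axioms quickly and then focus on completeness, which is the only substantive point. Positivity, positive homogeneity, and the triangle inequality for $\|\cdot\|_{W_{\infty,V}^1}$ follow immediately from the corresponding properties of $\|\cdot\|_\infty$, and definiteness uses that a bounded Lipschitz function is continuous, so $\|f\|_\infty = 0$ forces $f \equiv 0$ pointwise.

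For completeness, let $(f_n)$ be a Cauchy sequence in $W_{\infty,V}^1$. Then each of $(f_n)$, $(|\nabla f_n|)$, and $(Vf_n)$ is Cauchy in $L_\infty(M)$, so by completeness of $L_\infty$ there exist $f, g, h \in L_\infty(M)$ with $f_n \to f$, $|\nabla f_n| \to g$, and $Vf_n \to h$ in $L_\infty$. In particular $f_n \to f$ uniformly (hence pointwise everywhere after modification on a null set of $L_\infty$ representatives), and since $V \in A_\infty$ is finite $\mu$-a.e., we get $Vf_n \to Vf$ $\mu$-a.e., so $h = Vf$ a.e. and $\|Vf\|_\infty \leq \|h\|_\infty < \infty$.

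The remaining task, and the only real point, is to check that $f$ is Lipschitz and that $|\nabla(f_n - f)| \to 0$ in $L_\infty$. For this I would use the standard fact that on a Riemannian manifold a Lipschitz function $\varphi$ is differentiable $\mu$-a.e.\ and satisfies $\|\,|\nabla\varphi|\,\|_\infty = \Lip(\varphi)$. Applying this to $f_n - f_m \in W_{\infty,V}^1$ gives $\Lip(f_n - f_m) = \|\,|\nabla(f_n-f_m)|\,\|_\infty \to 0$ as $n,m\to\infty$, and passing to the uniform limit in $m$ yields $\Lip(f_n - f) \leq \liminf_m \Lip(f_n - f_m) \to 0$. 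Thus $f$ is Lipschitz (with $\Lip(f) \leq \sup_n \Lip(f_n) < \infty$), $f \in W_{\infty,V}^1$, and $\|\,|\nabla(f_n - f)|\,\|_\infty = \Lip(f_n - f) \to 0$. Combined with $\|f_n - f\|_\infty \to 0$ and $\|V(f_n - f)\|_\infty \to 0$, this gives $\|f_n - f\|_{W_{\infty,V}^1} \to 0$.

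The only potential obstacle is the identity $\|\,|\nabla\varphi|\,\|_\infty = \Lip(\varphi)$ for Lipschitz $\varphi$ on $M$, which is the Riemannian analogue of Rademacher's theorem together with integration along minimizing geodesics; since $M$ is a complete Riemannian manifold this is standard and needs no further input from the hypotheses of the paper.
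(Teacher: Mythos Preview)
Your proof is correct and follows essentially the same approach as the paper: both identify the $L_\infty$ limit of $Vf_n$ with $Vf$ by noting that $f_n\to f$ uniformly forces $Vf_n\to Vf$ $\mu$-a.e.\ (since $V<\infty$ a.e.), and then invoke uniqueness of the a.e.\ limit. The only difference is packaging: the paper cites the completeness of $W_\infty^1$ (bounded Lipschitz functions) as a known fact, whereas you spell out this step via the identity $\Lip(\varphi)=\|\,|\nabla\varphi|\,\|_\infty$ and a limiting argument.
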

\begin{proof} Let $(f_{n})_{n}$ be a Cauchy sequence in $W_{\infty,V}^{1}$. Then it is a Cauchy sequence in $W_{\infty}^{1}$ and converges to $f$ in $W_{\infty}^{1}$. Hence $Vf_{n}\rightarrow Vf\; \mu-a.e.$. On the other hand, $Vf_{n}\rightarrow g$ in $L_{\infty}$, then $\mu-a.e.$ The unicity of the limit gives us $g=Vf$.
\end{proof}
\
\begin{prop} Assume that $M$ satisfies $(D)$ and admits a Poincar\'{e} inequality $(P_{s})$ for some $1\leq s<\infty$ and that $V\in A_{\infty}$. Then, for $s\leq p\leq \infty$, $\dot{W}_{p,V}^{1}$ equipped with the norm
$$
\|f\|_{ \dot{W}_{p,V}^{1}}=\|\,|\nabla f|\,\|_{p}+\|Vf\|_{p}
$$
is a Banach space.
\end{prop}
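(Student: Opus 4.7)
The plan is as follows. Given a Cauchy sequence $(f_n)_n$ in $\dot W_{p,V}^{1}$, I would first extract the candidate for the limit. Since the semi-norm dominates both $\|\,|\nabla f|\,\|_p$ and $\|Vf\|_p$, the vector fields $\nabla f_n$ form a Cauchy sequence in $L_p(TM)$ (equipped with the pointwise tangent norm) and $(Vf_n)_n$ is Cauchy in $L_p(M)$; call their respective limits $G$ and $h$. Because $V\in A_\infty$ forces $V>0$ $\mu$-a.e., the function $f:=h/V$ is well defined $\mu$-a.e.\ and satisfies $Vf=h\in L_p$. The problem then reduces to showing that $f\in L_{s,loc}$ and $\nabla f=G$ in the sense of distributions, after which $|\nabla f|=|G|\in L_p$ and $f_n\to f$ in $\dot W_{p,V}^{1}$ are immediate.

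The decisive step is to recover some local control on $f$ itself and to match the two partial pieces of information. Fix a ball $B$ of radius $r$. By $(P_s)$ and, since $p\ge s$, H\"older's inequality,
\begin{equation*}
\Bigl(\aver{B}|f_n-f_m-(f_n-f_m)_B|^{s}\,d\mu\Bigr)^{1/s}\le Cr\,\Bigl(\aver{B}|\nabla(f_n-f_m)|^{p}\,d\mu\Bigr)^{1/p},
\end{equation*}
so $(f_n-(f_n)_B)_n$ is Cauchy in $L_s(B)$, with some limit $\tilde f_B$. On the other hand, setting $E_\epsilon:=\{V\ge\epsilon\}$, the pointwise bound $|f_n-f_m|\le\epsilon^{-1}|V(f_n-f_m)|$ on $E_\epsilon$ yields $f_n\to f$ in $L_p(E_\epsilon)$, hence in $L_s(E_\epsilon\cap B)$ since $\mu(B)<\infty$. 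For $\epsilon>0$ small enough, $\mu(E_\epsilon\cap B)>0$ (as $V>0$ $\mu$-a.e.), and subtracting the two convergences on $E_\epsilon\cap B$ forces the scalars $(f_n)_B$ to converge to some constant $c_B$. Therefore $f_n\to\tilde f_B+c_B$ in $L_s(B)$, and this limit must agree $\mu$-a.e.\ on $B$ with $f$; in particular $f\in L_s(B)$, and hence $f\in L_{s,loc}$.

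With $f\in L_{s,loc}$ in hand, I would pass to the limit in the distributional identity
\begin{equation*}
\int_M \nabla f_n\cdot\varphi\,d\mu=-\int_M f_n\,\operatorname{div}\varphi\,d\mu
\end{equation*}
for any smooth compactly supported test vector field $\varphi$: the left-hand side tends to $\int G\cdot\varphi\,d\mu$ by $L_p$-convergence of $\nabla f_n$, while the right-hand side tends to $-\int f\operatorname{div}\varphi\,d\mu$ by the $L_s$-convergence established above on $\supp\varphi$. Hence $\nabla f=G$ distributionally, completing the argument. The endpoint $p=\infty$ is analogous, using that $|\nabla f_n|$ Cauchy in $L_\infty$ yields a uniformly Lipschitz limit. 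I expect the main obstacle to be precisely the gluing step in the second paragraph: the semi-norm provides no a priori $L_{p,loc}$ bound on $f_n$, so Poincar\'e (which controls $f_n$ only up to an additive constant on each ball) must be combined with the lower bounds for $V$ on the sets $E_\epsilon$ to pin down those constants.
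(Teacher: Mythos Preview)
Your argument is correct, but it differs from the paper's in one central step. The paper invokes the Fefferman--Phong inequality (Lemma~\ref{FP}) with weight $V^{s}$ to obtain, for each ball $B$, a uniform lower bound
\[
\int_{B}\bigl(|\nabla(f_{n}-f_{m})|^{s}+|V(f_{n}-f_{m})|^{s}\bigr)\,d\mu \;\ge\; C(B,V)\int_{B}|f_{n}-f_{m}|^{s}\,d\mu,
\]
which yields directly that $(f_{n})$ is Cauchy in $L_{s,loc}$. It then cites \cite{goldshtein} to find constants $c_{n}$ with $f_{n}-c_{n}\to g$ in $L_{p,loc}$ and $\nabla(f_{n}-c_{n})\to\nabla g$ in $L_{p}$, and concludes by comparing the two Cauchy sequences to force $c_{n}\to c$ and identify the limit $f=g+c$.

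You bypass Fefferman--Phong entirely: you use Poincar\'e alone to get $(f_{n}-(f_{n})_{B})$ Cauchy in $L_{s}(B)$, and then pin down the floating constants $(f_{n})_{B}$ by exploiting the sets $E_{\epsilon}=\{V\ge\epsilon\}$, on which $|f_{n}-f_{m}|\le\epsilon^{-1}|V(f_{n}-f_{m})|$ gives $L_{p}$-convergence of $f_{n}$ itself. This is more elementary and in fact uses only $V>0$ $\mu$-a.e., not the full $A_{\infty}$ hypothesis. The paper's route is shorter once Fefferman--Phong is in hand (and that lemma is needed later anyway for the Calder\'on--Zygmund decomposition), but your approach is self-contained and avoids the external reference to \cite{goldshtein}, handling the identification $\nabla f=G$ directly via the distributional pairing.
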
 
\begin{proof} Let $(f_{n})_{n}$ be a Cauchy sequence in $\dot{W}_{p,V}^{1}$. There exist a sequence of functions $(g_{n})_{n}$ and a sequence of scalar $(c_{n})_{n}$ with $g_{n}=f_{n}-c_{n}$ converging to a function $g$ in $L_{p, loc}$ and $\nabla g_{n}$ converging to $\nabla g$ in $L_{p}$ (see \cite{goldshtein}). Moreover, since $(Vf_{n})_{n}$ is a Cauchy sequence in $L_{p}$, it converges to a function $h$ $\,\mu-a.e.$. Lemma \ref{FP} in section 3 below yields
$$
\int_{B}\left(|\nabla(f_{n}-f_{m})|^{s}+|V(f_{n}-f_{m})|^{s}\right)d\mu\geq C(B,V)\int_{B}|f_{n}-f_{m}|^{s}d\mu
$$
for all  ball $B$ of $M$. Thus, $(f_{n})_{n}$ is a Cauchy sequence in $L_{s,loc}$. Since $(f_{n}-c_{n})$ is also Cauchy in $L_{s,loc}$, the sequence of constants $(c_{n})_{n}$ is Cauchy in $L_{s,loc}$ and therefore converges to a constant $c$.
Take $f:=g+c$. We have $g_{n}+c=f_{n}-c_{n}+c \rightarrow f$ in $L_{p, loc}$. It follows that $f_{n}\rightarrow f$ in $L_{p, loc}$ and so $Vf_{n}\rightarrow Vf \;\mu-a.e.$. The unicity of the limit gives us $h=Vf$. Hence, we conclude that $f\in \dot{W}_{p,V}^{1}$ and $f_{n}\rightarrow f$ in $ \dot{W}_{p,V}^{1}$ which finishes the proof.
\end{proof}
In the following proposition we characterize the $W_{p,V}^{1}$. We have
\begin{prop} \label{DC} Let $M$ be a complete Riemannian manifold and let $V\in RH_{qloc}$ for some $1\leq q<\infty$. Consider, for $1\leq p<q$, 
$$H_{p,V}^{1}(M)=H_{p,V}^{1}=\left\lbrace f\in L_{p}:\; |\nabla f|\, \textrm{ and } Vf \in L_{p} \right\rbrace
$$ and equip it with the same norm as $W_{p,V}^{1}$. 
Then $C_{0}^{\infty}$ is dense in $H_{p,V}^{1}$ and hence $W_{p,V}^{1}=H_{p,V}^{1}$.
\end{prop}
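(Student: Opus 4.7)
The plan is to establish the density of $C_0^\infty$ in $H_{p,V}^1$ by a standard three-step approximation (truncate in value, truncate in support, then mollify), taking care at each stage that the weighted term $\|Vf\|_p$ behaves well. Once density is established, the equality $W_{p,V}^1 = H_{p,V}^1$ follows from the fact that $H_{p,V}^1$ is complete in its norm (the limit $h$ of a Cauchy sequence $(Vf_n)_n$ in $L_p$ agrees $\mu$-a.e.\ with $Vf$, where $f$ is the $L_p$-limit of $f_n$, because $f_n\to f$ $\mu$-a.e.\ along a subsequence) together with the obvious inclusion $E_{p,V}^1\subset H_{p,V}^1$.

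\textbf{Step 1 (truncation of values).} Given $f\in H_{p,V}^1$, set $T_n(f)=\max(-n,\min(n,f))$. Then $|T_n(f)|\le |f|$, $|\nabla T_n(f)|=|\nabla f|\,\ind_{\{|f|\le n\}}\le |\nabla f|$, and $T_n(f)\to f$ pointwise. Dominated convergence (with dominating functions $|f|$, $V|f|$, $|\nabla f|$, all in $L_p$) yields $T_n(f)\to f$ in $H_{p,V}^1$. This reduces the problem to bounded $f\in H_{p,V}^1$.

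\textbf{Step 2 (truncation of support).} Fix $x_0\in M$ and, for each $R>0$, pick a smooth cutoff $\phi_R$ with $0\le\phi_R\le 1$, $\phi_R\equiv 1$ on $B(x_0,R)$, $\supp\phi_R\subset B(x_0,2R)$, and $|\nabla\phi_R|\le C/R$ (such cutoffs exist on any complete Riemannian manifold by smoothing the distance function). For bounded $f\in H_{p,V}^1$, $\phi_R f$ has compact support; moreover $\phi_R f\to f$ and $V\phi_R f\to Vf$ in $L_p$ by dominated convergence (dominating functions $|f|$ and $V|f|$), and
\[
\nabla(\phi_R f)=\phi_R\nabla f+f\,\nabla\phi_R,
\]
whose first summand converges to $\nabla f$ in $L_p$ by dominated convergence, while
\[
\|\,f\,\nabla\phi_R\,\|_p\le \frac{C}{R}\Big(\int_{B(x_0,2R)\setminus B(x_0,R)}|f|^p\,d\mu\Big)^{1/p}\longrightarrow 0
\]
since $f\in L_p$. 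Thus we may assume further that $f$ is bounded with compact support.

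\textbf{Step 3 (regularization).} Cover $\supp f$ by finitely many coordinate charts, pick a subordinate smooth partition of unity $(\chi_i)$, and write $f=\sum_i \chi_i f$ with each $\chi_i f$ supported in a single chart and still bounded. In a chart, we mollify $\chi_i f$ by Euclidean convolution with a standard mollifier $\rho_\epsilon$ to obtain $(\chi_i f)_\epsilon\in C_0^\infty$; for small $\epsilon$ the supports remain in a common compact set $K$. Standard Friedrichs mollification on Euclidean space gives $(\chi_i f)_\epsilon\to \chi_i f$ in the unweighted $W_p^1$ norm (equivalent to the Riemannian one on the chart). For the weighted term, the bound $|(\chi_i f)_\epsilon|\le\|\chi_i f\|_\infty$ and the local $RH_{qloc}$ hypothesis yield $V\ind_K\in L_q\subset L_p$ (since $K$ is compact and $p<q$). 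Hence
\[
|V(\chi_i f)_\epsilon-V\chi_i f|\le 2\|\chi_i f\|_\infty\,V\ind_K\in L_p,
\]
and since $(\chi_i f)_\epsilon\to \chi_i f$ $\mu$-a.e., dominated convergence gives $V(\chi_i f)_\epsilon\to V\chi_i f$ in $L_p$. Summing over $i$ produces a $C_0^\infty$ approximation of $f$ in $H_{p,V}^1$.

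The main obstacle is precisely the weighted convergence in Step 3: mollification is transparent for the $L_p$ and gradient terms, but for the $V$-term one needs an $L_p$ domination independent of $\epsilon$. This is exactly where the hypothesis $V\in RH_{qloc}$ with $p<q$ is used, through the inclusion $V\in L_{q,loc}\subset L_{p,loc}$, which gives the dominating function $V\ind_K$ once the support has been compactified in Step 2 and the values bounded in Step 1. The rest is routine truncation and convergence.
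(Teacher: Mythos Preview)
Your proof is correct and follows essentially the same three-step strategy as the paper's Appendix proof: reduce to compactly supported functions, then to bounded compactly supported functions, then mollify in charts. The paper performs the first two reductions in the opposite order (compact support first, then bounded values via a smooth truncation $F_n$ rather than the Lipschitz cutoff $T_n$), but this is cosmetic. The only substantive difference is in handling the weighted term after mollification: you invoke dominated convergence with majorant $2\|f\|_\infty V\ind_K\in L_p$ (using $V\in L_{q,loc}\subset L_{p,loc}$ on the compact $K$), while the paper uses the H\"older estimate $\int_K |f-h_\epsilon|^p V^p\le \|\,|f-h_\epsilon|^p\|_{L_{t'}(K)}\|V^p\|_{L_t(K)}$ with $t=r/p>1$, bounding the first factor by interpolation between $\|f-h_\epsilon\|_p\to 0$ and the uniform $L_\infty$ bound. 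Your route is slightly more direct; both exploit exactly the local higher integrability of $V$ coming from $RH_{qloc}$ with $p<q$.
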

\begin{proof} See the Appendix.
\end{proof}
Therefore, under the hypotheses of Proposition \ref{DC}, $W_{p,V}^{1}$ is the set of distributions $f\in L_{p}$ such that $|\nabla f|$ and $Vf$ belong to $L_{p}$.
\section{Principal tools}
\
We shall use the following form of Fefferman-Phong inequality. The proof is completely analogous to the one in $\mathbb{R}^{n}$ ( see \cite{shen1}, \cite{auscher3}): 
\begin{lem}\label{FP}(Fefferman-Phong inequality). Let $M$ be a complete Riemannian manifold satisfying $(D)$. Let $w\in A_{\infty}$ and $1\leq p <\infty$. We assume that $M$ admits also a Poincar\'{e} inequality $(P_{p})$. Then there is a constant $C>0$ depending only on the $A_{\infty}$ constant of $w$, $p$ and the constants in $(D),\,(P_{p})$, such that for all ball $B$ of radius $R>0$ and $u \in W_{p,loc}^{1}$
$$
\int_{B}(|\nabla u|^{p}+w|u|^{p})d\mu \geq 
 C \min(R^{-p}, w_{B})\int_{B} |u|^{p}d\mu.
$$
\end{lem}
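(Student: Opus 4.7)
The plan is to prove the two inequalities
\[
\int_B (|\nabla u|^p + w|u|^p)\,d\mu \;\geq\; C R^{-p}\int_B |u|^p\,d\mu
\qquad \text{and} \qquad
\int_B (|\nabla u|^p + w|u|^p)\,d\mu \;\geq\; C w_B\int_B |u|^p\,d\mu
\]
separately, so that the result follows by taking the larger right-hand side. The starting point in both cases is the elementary splitting
\[
\int_B |u|^p\,d\mu \;\leq\; 2^{p-1}\int_B |u-u_B|^p\,d\mu \;+\; 2^{p-1}|u_B|^p\mu(B),
\]
which separates the oscillatory part (handled by Poincar\'e) from the mean part (handled by the weight $w$).

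First, Poincar\'e $(P_p)$ immediately gives
\[
R^{-p}\int_B |u-u_B|^p\,d\mu \;\leq\; C\int_B |\nabla u|^p\,d\mu,
\]
so only the term $\min(R^{-p},w_B)\,|u_B|^p\mu(B)$ needs work. Introduce the level set
\[
F = \{x\in B : |u(x)|\geq |u_B|/2\}
\]
and dichotomize on the size of $\mu(F)/\mu(B)$. If $\mu(F)\geq \mu(B)/2$, then the $A_\infty$ property of $w$ (in its quantitative form: $\mu(E)/\mu(B)\geq \alpha$ implies $w(E)/w(B)\geq \beta(\alpha)$) yields $w(F)\geq c\,w(B)$; on $F$ we have $|u|^p\geq (|u_B|/2)^p$, so
\[
\int_B w|u|^p\,d\mu \;\geq\; c\,|u_B|^p w(B) \;=\; c\,|u_B|^p w_B\,\mu(B),
\]
which dominates $\min(R^{-p},w_B)\,|u_B|^p\mu(B)$ because $\min\leq w_B$. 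If on the contrary $\mu(F^c)\geq \mu(B)/2$, then on $F^c$ we have $|u-u_B|> |u_B|/2$, hence
\[
|u_B|^p\mu(B) \;\leq\; 2^{p+1}\int_B |u-u_B|^p\,d\mu \;\leq\; C R^p\int_B |\nabla u|^p\,d\mu,
\]
so that $\min(R^{-p},w_B)\,|u_B|^p\mu(B)\leq R^{-p}|u_B|^p\mu(B)\leq C\int_B|\nabla u|^p$. Combining both cases closes the proof.

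The key conceptual step, and the only point where weight theory enters in a nontrivial way, is the use of the $A_\infty$ property to transfer a lower bound on the $\mu$-measure of $F$ to a lower bound on the $w$-measure. Everything else is just Poincar\'e plus an arithmetic dichotomy, and the argument is essentially the Euclidean one since the only geometric ingredients invoked are doubling (for $A_\infty$ to behave well) and the stated Poincar\'e inequality. No Sobolev-Poincar\'e improvement or self-improvement of $(P_p)$ is needed, so the argument goes through for every $p\geq 1$ with the same constants depending only on $p$, the $A_\infty$ constant, and the constants in $(D)$ and $(P_p)$.
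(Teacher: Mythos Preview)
Your proof is correct, but it follows a different line from the paper's. The paper does not split $\int_B|u|^p$ into oscillation plus mean and then dichotomize on a level set of $|u|$. Instead, it first establishes the pointwise-in-average inequality
\[
\int_B(|\nabla u|^p + w|u|^p)\,d\mu \;\geq\; \bigl[\min(CR^{-p},w)\bigr]_B \int_B |u|^p\,d\mu,
\]
obtained from the elementary bound $\min(CR^{-p},w(x))|u(y)|^p \le 2^{p-1}\bigl(CR^{-p}|u(x)-u(y)|^p + w(x)|u(x)|^p\bigr)$ integrated over $B\times B$ together with the two-point form of Poincar\'e. Only then does $A_\infty$ enter, to pass from the averaged minimum $[\min(CR^{-p},w)]_B$ to $\min(R^{-p},w_B)$: one finds $\varepsilon>0$ with $\mu(\{w>\varepsilon w_B\}\cap B)>\tfrac12\mu(B)$, so the average of $\min(CR^{-p},w)$ over $B$ is at least $\tfrac12\min(CR^{-p},\varepsilon w_B)$.

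Both arguments hinge on the same $A_\infty$ mechanism (a lower bound on $\mu$-measure of a subset of $B$ forces a lower bound on its $w$-measure), but they apply it to different sets: the paper to a level set of $w$, you to a level set of $|u|$. Your route is arguably more elementary in that it uses the standard Poincar\'e inequality directly rather than its double-integral form, and it avoids manipulating the function $\min(CR^{-p},w(\cdot))$. The paper's route is cleaner in that it isolates a single intermediate quantity $[\min(CR^{-p},w)]_B$ and reduces everything to estimating that average. One small remark: your opening sentence about proving the two inequalities ``separately'' and then ``taking the larger right-hand side'' does not match what you actually do (and would be logically off, since the goal involves a $\min$); the body of your argument proceeds correctly with the $\min$ from the start, so you may want to rewrite that sentence.
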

\begin{proof}
 Since $M$ admits a $(P_{p})$ Poincar\'{e} inequality, we have
$$
\int_{B}|\nabla u|^{p}d\mu \geq \frac{C}{R^{p}\mu(B)} \int_{B}\int_{B} |u(x)-u(y)|^{p}d\mu(x)d\mu(y).
$$
This and
$$
\int_{B}w|u|^{p}d\mu =\frac{1}{\mu(B)}\int_{B}\int_{B} w(x) |u(x)|^{p} d\mu(x)d\mu(y)
$$
lead easily to
$$
\int_{B}(|\nabla u|^{p}+w|u|^{p})d\mu \geq [\min(CR^{-p},w)]_{B}\int_{B}|u|^{p}d\mu.
$$

Now we use that $w\in A_{\infty}$: there exists $\varepsilon>0$, independent of $B$, such that $\,E=\left\lbrace x\in B: w(x)>\varepsilon w_{B}\right\rbrace $ satisfies $\mu(E)>\frac{1}{2}\mu(B)$. Indeed since $w\in A_{\infty}$ then there exists $1\leq p<\infty$ such that $w\in A_{p}$. Therefore,
$$
\frac{\mu(E^{c})}{\mu(B)}\leq C\left(\frac{w(E^{c})}{w(B)}\right)^{\frac{1}{p}}\leq C\epsilon^{\frac{1}{p}}.
$$ 
We take $\epsilon>0$ such that $C\epsilon^{\frac{1}{p}}<\frac{1}{2}$. We obtain then
 $$
[\min(CR^{-p},w)]_{B}\geq \frac{1}{2} \min(CR^{-p},\varepsilon w_{B})\geq C' \min(R^{-p},w_{B}).
$$
This proves the desired inequality and finishes the proof.
\end{proof}
\
We proceed to establish two versions of a Calder\'{o}n-Zygmund decomposition:
\begin{prop}\label{CZ} Let $M$ be a complete non-compact Riemannian manifold satisfying $(D)$. Let $V\in RH_{q}$, for some $1< q<\infty$ and assume that $M$ admits a Poincar\'{e} inequality $(P_{s})$ for some $1\leq s<q$. Let $f\in W_{p,V}^{1}$, $s\leq p<q$, and $\alpha>0$. Then one can find a collection of balls $(B_{i})$, functions $g\in W_{q,V}^{1}$ and $b_{i}\in W_{s,V}^{1}$ with the following properties
\begin{equation}\label{1}
f=g+\sum_{i}b_{i}
\end{equation}
\begin{equation}\label{2}
\int_{\cup_{i}B_{i}} T_{q}g\, d\mu \leq C \alpha^{q} \mu(\cup_{i}B_{i})
\end{equation}
\begin{equation}\label{3}
\supp\, b_{i}\subset B_{i},\; \int_{B_{i}}T_{s}b_{i}\,d\mu\leq C\alpha^{s} \mu(B_{i})
 \end{equation}
 \begin{equation}\label{4}
 \sum_{i}\mu(B_{i})\leq \frac{C}{\alpha^{p}} \int_{M} T_{p}f\, d\mu
 \end{equation}
 \begin{equation}\label{5}
 \sum_{i}\ind_{B_{i}}\leq N
 \end {equation}
 where $N,\,C$ depend only on the constants in $(D)$, $(P_{s})$, $p$ and the $RH_{q}$ constant of $V$. Denote $T_{r}f= |f|^{r}+|\nabla f|^{r}+|Vf|^{r}$ for $1\leq r<\infty$.
 \end{prop}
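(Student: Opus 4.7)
The approach is a Calder\'on--Zygmund stopping-time argument at level $\alpha$ applied to the scalar function $T_p f\in L_1(M)$, with the reverse H\"older class of $V$ and the Fefferman--Phong inequality of Lemma~\ref{FP} used to handle the potential term.

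I would begin by setting $\Omega=\{x\in M:\mathcal{M}(T_p f)(x)>\alpha^p\}$. The weak $(1,1)$ bound in Theorem~\ref{MIT} immediately gives $\mu(\Omega)\le C\alpha^{-p}\int_M T_p f\,d\mu$, which is \eqref{4}; if $\Omega=\emptyset$ set $g=f$ with no $b_i$. Otherwise, apply a Whitney-type covering on $(M,d,\mu)$ (doubling) to obtain balls $B_i=B(x_i,r_i)$ with $r_i\simeq d(x_i,\Omega^c)$, bounded overlap $\sum_i\ind_{B_i}\le N$ (which is \eqref{5}), and a fixed dilate $\lambda B_i$ that meets $\Omega^c$ at a point $y_i$ where $\mathcal{M}(T_p f)(y_i)\le\alpha^p$. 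Choose a Lipschitz partition of unity $(\chi_i)$ subordinate to $(B_i)$ satisfying $0\le\chi_i\le 1$, $|\nabla\chi_i|\le C/r_i$ and $\sum_i\chi_i=\ind_\Omega$, and define
\[
b_i=(f-f_{B_i})\chi_i,\qquad g=f-\sum_i b_i,
\]
so that \eqref{1} and the support condition in \eqref{3} hold by construction.

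The Whitney property yields $\aver{B}T_p f\,d\mu\le\alpha^p$ on every ball $B$ comparable to $\lambda B_i$ containing $y_i$, and Jensen (using $s\le p$) translates this into $\aver{B_i}|f|^s,\ \aver{B_i}|\nabla f|^s,\ \aver{B_i}|Vf|^s\le C\alpha^s$. For $b_i$: a convexity splitting $|f-f_{B_i}|^s\le C(|f|^s+|f_{B_i}|^s)$ combined with $|f_{B_i}|\le C\alpha$ (Jensen on the average) gives $\int_{B_i}|b_i|^s\le C\alpha^s\mu(B_i)$; Leibniz plus the Poincar\'e inequality $(P_s)$ yield $\int_{B_i}|f-f_{B_i}|^s\le Cr_i^s\int_{B_i}|\nabla f|^s\le Cr_i^s\alpha^s\mu(B_i)$, which absorbs the factor $r_i^{-s}$ from $|\nabla\chi_i|$ and gives $\int_{B_i}|\nabla b_i|^s\le C\alpha^s\mu(B_i)$. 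The $V$-term $\int_{B_i}V^s|f-f_{B_i}|^s$ is handled by the same splitting: $\int_{B_i}|Vf|^s\le C\alpha^s\mu(B_i)$ is direct, whereas the remaining $|f_{B_i}|^s\int_{B_i}V^s$ is controlled by first using $s\le q$ together with $RH_q$ to obtain $\int_{B_i}V^s\le C\mu(B_i)(\aver{B_i}V)^s$, and then the missing inequality $|f_{B_i}|(\aver{B_i}V)\le C\alpha$ is obtained by writing $f_{B_i}\aver{B_i}V=\aver{B_i}(Vf)-\aver{B_i}V(f-f_{B_i})$, bounding $|\aver{B_i}Vf|\le\alpha$ directly, and estimating the correction term via Lemma~\ref{FP} applied with weight $V^s$. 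Summing over $i$ with finite overlap gives \eqref{3}.

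For $g$: at Lebesgue points of $\Omega^c$ the maximal control gives $|f|,|\nabla f|,|Vf|\le\alpha$ pointwise, so $T_q g=T_q f\le 3\alpha^q$ a.e.\ on $\Omega^c$, and the domination $|f|^q\le\alpha^{q-p}|f|^p$ (similarly for $|\nabla f|,|Vf|$) shows $g\in W^1_{q,V}$ outside $\Omega$. On $\Omega$, $g=\sum_i f_{B_i}\chi_i$ with $|f_{B_i}|\le C\alpha$; using $\sum_i\nabla\chi_i=0$ on $\Omega$, one writes $\nabla g(x)=\sum_i(f_{B_i}-f_{B_{j(x)}})\nabla\chi_i(x)$ and controls each $|f_{B_i}-f_{B_j}|\le Cr_i\alpha$ by Poincar\'e on a common parent ball, yielding $|g|,|\nabla g|\le C\alpha$ on $\Omega$. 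The $V^q|g|^q$ contribution on $\Omega$ is bounded using finite overlap, the $RH_q$ estimate $\int_{B_i}V^q\le C\mu(B_i)(\aver{B_i}V)^q$, and the same Fefferman--Phong based bound $(\aver{B_i}V)|f_{B_i}|\le C\alpha$, producing \eqref{2}. The principal obstacle is precisely the mixed $V$-term: the natural H\"older--Poincar\'e bound introduces a parasitic factor $(r_i\aver{B_i}V)^s$ that is not $O(1)$, and Lemma~\ref{FP}--which plays the role of a weighted Poincar\'e trading control of $|u|$ for control of $|\nabla u|+V|u|$ at a cost governed by $\min(r^{-s},(V^s)_B)$--is the decisive tool that closes the estimate uniformly in $r_i$ and in the local size of $V$.
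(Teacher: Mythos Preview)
Your overall architecture (Whitney covering of the superlevel set of the maximal function, Lipschitz partition of unity, Fefferman--Phong to handle the potential) is exactly the right one, but there is a genuine gap in the treatment of the $V$-term. With your uniform choice $b_i=(f-f_{B_i})\chi_i$, both the estimate $\int_{B_i}|Vb_i|^s\le C\alpha^s\mu(B_i)$ and the bound on $\int_\Omega V^q|g|^q$ reduce to showing $|f_{B_i}|\,V_{B_i}\le C\alpha$ (equivalently $(V^s)_{B_i}(|f|^s)_{B_i}\le C\alpha^s$). Lemma~\ref{FP} applied with weight $V^s$ and exponent $s$ only yields
\[
\min\bigl(r_i^{-s},(V^s)_{B_i}\bigr)\,(|f|^s)_{B_i}\le C\alpha^s,
\]
so you get the desired bound \emph{only} when $(V^s)_{B_i}\le r_i^{-s}$. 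On balls where $(V^s)_{B_i}>r_i^{-s}$ you obtain merely $(|f|^s)_{B_i}\le Cr_i^s\alpha^s$, with no control on $V_{B_i}r_i$, and your correction-term identity $f_{B_i}V_{B_i}=\aver{B_i}(Vf)-\aver{B_i}V(f-f_{B_i})$ does not help: bounding the second term by any non-circular means (H\"older plus Poincar\'e, or Jensen back to $\aver{B_i}V^s|f-f_{B_i}|^s$) reproduces exactly the parasitic factor $r_iV_{B_i}$ you are trying to eliminate. Lemma~\ref{FP} trades $|u|$ for $|\nabla u|+V|u|$ at cost $\min(r^{-s},(V^s)_B)$, and this minimum is precisely what forces a dichotomy; it cannot ``close the estimate uniformly in $r_i$ and in the local size of $V$'' as you assert.

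The paper resolves this by splitting the Whitney balls into \emph{type~1} ($(V^s)_{B_i}\ge r_i^{-s}$) and \emph{type~2} ($(V^s)_{B_i}< r_i^{-s}$) and setting $b_i=f\chi_i$ on type~1 balls, $b_i=(f-f_{B_i})\chi_i$ on type~2 balls. On type~1 balls $|Vb_i|\le|Vf|$ directly, and the gradient cross-term $r_i^{-s}\int_{B_i}|f|^s$ is exactly what Fefferman--Phong controls there; on $\Omega$ one then has $g=\sum^{2}f_{B_i}\chi_i$ summing \emph{only over type~2 balls}, and for those the needed bound $(V^s)_{B_i}(|f|^s)_{B_i}\le C\alpha^s$ does follow from Lemma~\ref{FP}. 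This two-type definition of the bad pieces is the missing idea in your argument.
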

 \begin{proof}
  Let $f\in W_{p,V}^{1}$, $\alpha>0$. Consider $\Omega =\left\lbrace x\in M: \mathcal{M}T_{s}f(x)>\alpha^{s}\right\rbrace$. If $\Omega=\emptyset$, then set 
  $$
   g=f, \; b_{i}=0\; \textrm{for all }i
  $$ 
  so that (\ref{2}) is satisfied thanks to the Lebesgue differentiation theorem. Otherwise the maximal theorem --Theorem \ref{MIT}-- and $p\geq s$ give us that 
  \begin{equation}\label{mO}
  \mu(\Omega)\leq \frac{C}{\alpha^{p}}\int_{M}T_{p}f \,d\mu<\infty.
  \end{equation}
   In particular $\Omega\neq M$ as $\mu(M)=\infty$. Let $F$ be the complement of $\Omega$. Since $\Omega$ is an open set distinct of $M$, let $(\underline{B_{i}})$ be a Whitney decomposition of $\Omega$ (\cite{coifman1}). That is, the balls  $\underline{B_{i}}$ are pairwise disjoint and there exist two constants $C_{2}>C_{1}>1$, depending only
on the metric, such that
\begin{itemize}
\item[1.] $\Omega=\cup_{i}B_{i}$ with $B_{i}=
C_{1}\underline{B_{i}}$ and the balls $B_{i}$ have the bounded overlap property;
\item[2.] $r_{i}=r(B_{i})=\frac{1}{2}d(x_{i},F)$ and $x_{i}$ is 
the center of $B_{i}$;
\item[3.] each ball $\overline{B_{i}}=C_{2}\underline{B_{i}}$ intersects $F$ ($C_{2}=4C_{1}$ works).
\end{itemize}
For $x\in \Omega$, denote $I_{x}=\left\lbrace i:x\in B_{i}\right\rbrace$. By the bounded overlap property of the balls $B_{i}$, we have that $\sharp I_{x} \leq N$. Fixing $j\in I_{x}$ and using the properties of the $B_{i}$'s, we easily see that $\frac{1}{3}r_{i}\leq r_{j}\leq 3r_{i}$ for all $i\in I_{x}$. In particular, $B_{i}\subset 7B_{j}$ for all $i\in I_{x}$.

Condition (\ref{5}) is nothing but the bounded overlap property of the $B_{i}$'s  and (\ref{4}) follows from (\ref{5}) and  (\ref{mO}). Note that $V\in RH_{q}$ implies $V^{q}\in A_{\infty}$ because there exists $\epsilon>0$ such that $V\in RH_{q+\epsilon}$ and hence $V^q\in RH_{1+\frac{\epsilon}{q}}$. Proposition \ref{CR} shows then that $V^{s}\in RH_{\frac{q}{s}}$. Applying Lemma \ref{FP} we get
\begin{equation}\label{t}
\int_{B_{i}} (|\nabla f|^{s}+|Vf|^{s})d\mu \geq C \min(V^{s}_{B_{i}},r_{i}^{-s})\int_{B_{i}}|f|^{s}d\mu.
\end{equation}
We declare $B_{i}$ of type 1 if $V_{B_{i}}^{s}\geq r_{i}^{-s}$ and of type 2 if $V_{B_{i}}^{s}< r_{i}^{-s}$. One should read $V_{B_{i}}^{s}$ as $(V^{s})_{B_{i}}$ but this is also equivalent to $(V_{B_{i}})^{s}$ since $V\in RH_{q}\subset RH_{s}$.

Let us now define the functions $b_{i}$. Let $(\chi_{i})_{i}$ be a partition of unity of $\Omega$ subordinated to the covering $(\underline{B_{i}})$, such that for all $i$, $\chi_{i}$ is a Lipschitz function supported in $B_{i}$ with
$\displaystyle\|\,|\nabla \chi_{i}|\, \|_{\infty}\leq
\frac{C}{r_{i}}$. To this end it is enough to choose $\displaystyle\chi_{i}(x)=
\psi(\frac{C_{1}d(x_{i},x)}{r_{i}})\Bigl(\sum_{k}\psi(\frac{C_{1}d(x_{k},x)}{r_{k}})\Bigr)^{-1}$, where $\psi$ is a smooth function, $\psi=1$ on $[0,1]$, $\psi=0$
on $[\frac{1+C_{1}}{2},+\infty[$ and $0\leq \psi\leq 1$. 
Set 
\begin{equation*}
b_{i}= \begin{cases}
f\chi_{i} \;\textrm{ if }\, B_{i}\; \textrm{ of type 1},
\\ 
(f-f_{B_{i}})\chi_{i}\; \textrm{  if}\, B_{i} \textrm{ of type 2}.
\end{cases}
\end{equation*}
 Let us estimate $\int_{B_{i}}T_{s}b_{i}\,d\mu$. We distinguish two cases:
\begin{itemize}
\item[1.] If $B_{i}$ is of type $2$, then 
\begin{align*}
\int_{B_{i}} |b_{i}|^{s} d\mu
&=\int_{B_{i}} |(f-f_{B_{i}})\chi_{i}|^{s} d\mu
\\
&\leq
C\left(\int_{B_{i}}|f|^{s}d\mu+\int_{B_{i}}|f_{B_{i}}|^{s} d\mu\right)
\\
&\leq C\int_{B_{i}}|f|^{s} d\mu
\\
&\leq C\int_{\overline{B_{i}}}|f|^{s} d\mu
\\
&\leq C \alpha^{s} \mu(\overline{B_{i}})
\\
&\leq C \alpha^{s} \mu(B_{i})
\end{align*}
where we used that $\overline{B_{i}} \cap F \neq
\emptyset$ and the property $(D)$. 
The Poincar\'{e} inequality $(P_{s})$ gives us
\begin{align*}
\int_{B_{i}}|\nabla b_{i}|^{s}d\mu &\leq C \int_{B_{i}}|\nabla f|^{s}d\mu
\\
&\leq C \mathcal{M}T_{s}f(y) \mu(B_{i})
\\
&\leq C\alpha^{s} \mu(B_{i})
\end{align*}
as $y$ can be chosen in $F \cap \overline{B_{i}}$. 
Finally,
\begin{align*}
\int_{B_{i}} |Vb_{i}|^{s}d\mu &=\int_{B_{i}} |V(f-f_{B_{i}})\chi_{i}|^{s}d\mu
\\
&\leq \int_{B_{i}} |Vf|^{s}d\mu+\int_{B_{i}}|Vf_{B_{i}}|^{s}d\mu
\\
&\leq (|Vf|^{s})_{B_{i}}\mu(B_{i})+C (V^{s})_{B_{i}}(|f|^{s})_{B_{i}}\mu(B_{i})
\\
&\leq C \alpha^{s} \mu(B_{i})+\left(|\nabla f|^{s}+|Vf|^{s}\right)_{B_{i}}\mu(B_{i})
\\
&\leq C\alpha^{s} \mu(B_{i}).
\end{align*}
We used that $\overline{B_{i}} \cap F\neq \emptyset$, Jensen's inequality and (\ref{t}), noting that $B_{i}$ is of type 2.
\item[2.] If $B_{i}$ is of type 1, then
\begin{align*}
\int_{B_{i}} T_{s}b_{i}\,d\mu &\leq \int_{B_{i}} T_{s}f\,d\mu+ r_{i}^{-s}\int_{B_{i}}|f|^{s}d\mu
\\
&\leq C\int_{B_{i}} T_{s}f\,d\mu
\\
&\leq C\alpha^{s}\mu(B_{i})
\end{align*}
where we used that $\overline{B_{i}}\cap F\neq \emptyset$ and that $B_{i}$ is of type 1.
\end{itemize}

Set now $g=f-\sum_{i}b_{i}$, where the sum is over balls of both types and is locally finite by (\ref{5}). The function $g$ is defined  almost everywhere on $M$, $g=f$ on $F$ and $g=\sum{}^2 f_{B_{i}} \chi_{i}$ on $\Omega$ where $\sum{}^j$ means that we are summing over balls of type $j$. Observe that $g$ is a locally integrable function on $M$. Indeed, let $\varphi\in L_{\infty}$ with compact support. Since $d(x,F)\geq r_{i}$ for $x \in \supp\, \,b_{i}$, we obtain
\begin{equation*} \int\sum_{i}|b_{i}|\,|\varphi|\,d\mu \leq
\Bigl(\int\sum_{i}\frac{|b_{i}|}{r_{i}}\,d\mu\Bigr)\,\sup_{x\in
M}\Bigl(d(x,F)|\varphi(x)|\Bigr)\quad
\end{equation*}
and
\begin {align*}
\int \frac{|b_{i}|}{r_{i}}d\mu
&=\int_{B_{i}}\frac{|f-f_{B_{i}}|}{r_{i}}\chi_{i}\,d\mu
\\
&\leq \Bigl(\mu(B_{i})\Bigr)^{\frac{1}{s'}}
\Bigl(\int_{B_{i}}|\nabla f|^{s} d\mu\Bigr)^{\frac{1}{s}}
\\
&\leq C\alpha\mu(B_{i}).
\end{align*}
We used the H\"{o}lder inequality, $(P_{s})$ and that $\overline{B_{i}}\cap F\neq \emptyset$, $s'$ being the conjugate of $s$. Hence 
$ \displaystyle \int\sum_{i}|b_{i}||\varphi|d\mu \leq
C\alpha\mu(\Omega) \sup_{x\in M
}\Bigl(d(x,F)|\varphi(x)|\Bigr)$. Since $f\in L_ {1,loc}$, we conclude that $g\in L_{1,loc}$. (Note that since $b\in L_{1}$ in our case, we can say directly that $g\in L_{1,loc}$. However, for the homogeneous case --section 5-- we need this observation to conclude that $g\in L_{1,loc}$.)  It remains to prove (\ref{2}). Note that $\displaystyle \sum_{i}\chi_{i}(x)=1$ and $\displaystyle \sum_{i}\nabla\chi_{i}(x)=0$  for all $x\in \Omega$. A computation of the sum $\sum_{i}\nabla b_{i}$ leads us to
$$
\nabla g = (\nabla f)\ind_{F} +\sum{}^2 f_{B_{i}} \nabla \chi_{i}.
 $$
By definition of $F$ and the differentiation theorem, $|\nabla g|$ is bounded by $\alpha$ almost everywhere on $F$.  It remains to control $\|h_{2}\|_{\infty}$ where $h_{2}= \sum{}^2 f_{B_{i}}  \nabla \chi_{i}$. Set $h_{1}=\sum{}^1  f_{B_{i}}  \nabla \chi_{i}$. By already seen arguments  for type 1 balls,  $|f_{B_{i}} | \leq C\alpha r_{i} $. Hence,
$|h_{1}| \le C\sum{}^1\ \ind_{B_{i}} \alpha\leq CN\alpha$ and it suffices to show that $h=h_{1}+h_{2}$ is bounded by $C\alpha$. To see this, fix $x \in \Omega$. Let
$B_{j}$ be a Whitney ball containing
$x$. We may write 
$$
|h(x)| = \left|\sum_{i \in I_x} (f_{B_{i}} - f_{B_{j}}) \nabla\chi_{i}(x)\right| \leq C \sum_{i \in I_{x}} |f_{B_{i}} - f_{B_{j}} | r_i^{-1}.
$$
 Since $B_{i}\subset7B_{j}$ for all $i\in I_{x}$, the Poincar\'e inequality $(P_{s})$ and the definition of $B_{j}$ yield
$$
|f_{B_{i}} - f_{B_{j}} | \leq
 Cr_{j} \left((|\nabla f|^{s})_{7B_{j}}\right)^{\frac{1}{s}} \leq Cr_{j} \alpha.
 $$ 
 Thus $\|h\|_{\infty}\leq C\alpha$.
 
 Let us now estimate $\int_{\Omega} T_{q}g\,d\mu$.
 We have 
 \begin{align*}
 \int_{\Omega}|g|^{q}d\mu&=\int_{M}|(\sum{}^2 f_{B_{i}} \chi_{i})|^{q}d\mu
 \\
 &\leq C\sum{}^2 |f_{B_{i}}|^{q}\mu(B_{i})
 \\
 &\leq CN\alpha^{q}\mu(\Omega).
\end{align*}
We used the estimate
\begin{equation*}
(|f|_{B_{i}})^{s} \leq (|f|^{s})_{B_{i}}\leq  (\mathcal{M}T_{s}f)(y)\leq \alpha^{s}
\end{equation*}
as $y$ can be chosen in $F\cap \overline{B_{i}}$.
For $|\nabla g|$, we have
\begin{align*}
\int_{\Omega}|\nabla g|^{q}d\mu&=\int_{\Omega}|h_{2}|^{q}d\mu
\\
&\leq C\alpha^{q}\mu(\Omega). 
\end{align*}
Finally, since  by Proposition \ref{CR} $\,V^{s}\in RH_{\frac{q}{s}}$, we get
\begin{align*}
\int_{\Omega}V^{q}|g|^{q}d\mu &\leq \sum{}^2 \int_{B_{i}}V^{q}|f_{B_{i}}|^{q}d\mu
\\
&\leq C\sum{}^2 (V^{s}_{B_{i}}|f_{B_{i}}|^{s})^{\frac{q}{s}}\mu(B_{i}).
\end{align*}
By construction of the type 2 balls and by (\ref{t}) we have
$V^{s}_{B_{i}}|f_{B_{i}}|^{s}\leq V^{s}_{B_{i}}(|f|^{s})_{B_{i}}\leq C (|\nabla f|^{s}+|Vf|^{s})_{B_{i}}\leq C\alpha^{s}$.
Then
$\int_{\Omega}V^{q}|g|^{q}d\mu\leq C \sum{}^2\alpha^{q}\mu(B_{i}) \leq NC\alpha^{q}\mu(\Omega)$.

To finish the proof, we have to verify that $g\in W_{q,V}^{1}$. For that we just have to control $\int_{F}T_{q}g\,d\mu$. As $g=f$ on $F$, this readily follows from
\begin{align*}
\int_{F} T_{q}f d\mu&=\int_{F}(|f|^{q}+|\nabla f|^{q}+|Vf|^{q})d\mu 
\\
&\leq \int_{F}(|f|^{p}|f|^{q-p}+|\nabla f|^{p}|\nabla f|^{q-p}+|Vf|^{p}|Vf|^{q-p})d\mu \\
&\leq \alpha^{q-p} \|f\|_{W_{p,V}^{1}}^{p}.
\end{align*}
\end{proof}
\begin{rem} 1-It is a straightforward consequence from (\ref{3}) that $b_{i}\in W_{r,V}^{1}$ for all $1\leq r\leq s$ with $\|b_{i}\|_{W_{r,V}^1}\leq C\alpha \mu(B_{i})^{\frac{1}{r}}$.
\\
2-The estimate $\int_{F}T_{q}g\,d\mu$ above is too crude to be used in the interpolation argument. Note that (\ref2) only involves control of $T_{q}g$ on $\Omega=\cup_{i}B_{i}$. Compare with (\ref{egI}) in the next argument when $q=\infty$.
\end{rem} 
\begin{prop}\label{CZI} Let $M$ be a complete non-compact Riemannian manifold satisfying $(D)$. Let $V\in RH_{\infty}$ and  assume that $M$ admits a Poincar\'{e} inequality $(P_{s})$ for some $1\leq s<\infty$. Let $f\in W_{p,V}^{1}$, $s\leq p<\infty$, and $\alpha>0$. Then one can find a collection of balls $(B_{i})$, functions $b_{i}$ and a Lipschitz function $g$ such that the following properties hold:
\begin{equation}
f = g+\sum_{i}b_{i} \label{dfI}
\end{equation}
\begin{equation}
\|g\|_{W_{\infty,V}^{1}}\leq C\alpha \label{egI}
\end{equation}
\begin{equation}
\supp\, b_{i}\subset B_{i}, \, \forall\, 1\leq r\leq s\;\int_{B_{i}}T_{r}b_{i}\,d\mu\leq C\alpha^{r}\mu(B_{i})\label{eb}
\end{equation}
\begin{equation}
\sum_{i}\mu(B_{i})\leq \frac{C}{\alpha^{p}}\int T_{p}f\, d\mu
\label{eB}
\end{equation}
\begin{equation}
\sum_{i}\chi_{B_{i}}\leq N \label{rb}
\end{equation}
where $C$ and $N$  only depend on the constants in $(D)$, $(P_{s})$, $p$  and  the $RH_{\infty}$ constant of $V$.
\end{prop}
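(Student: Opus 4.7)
The plan is to mimic the proof of Proposition \ref{CZ} almost verbatim, using the same Whitney decomposition of $\Omega=\{\mathcal{M}T_{s}f>\alpha^{s}\}$, the same splitting of Whitney balls into type 1 ($V_{B_{i}}^{s}\geq r_{i}^{-s}$) and type 2 ($V_{B_{i}}^{s}<r_{i}^{-s}$), the same partition of unity $(\chi_{i})$, and the same choice $b_{i}=f\chi_{i}$ on type 1 balls, $b_{i}=(f-f_{B_{i}})\chi_{i}$ on type 2 balls, and $g=f-\sum_{i}b_{i}$. Since $RH_{\infty}\subset RH_{q}$ for every finite $q$, Lemma \ref{FP} and all other ingredients of that proof remain available.

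The bounded overlap of the $B_{i}$ gives (\ref{rb}); together with the maximal theorem and $p\geq s$ it gives (\ref{eB}). For (\ref{eb}), I would repeat the $L^{s}$ computations of Proposition \ref{CZ} to get $\int_{B_{i}}T_{s}b_{i}\,d\mu\leq C\alpha^{s}\mu(B_{i})$, and then for any $1\leq r\leq s$ use $\supp\,b_{i}\subset B_{i}$ and H\"older's inequality to pass from exponent $s$ down to $r$, producing $\int_{B_{i}}T_{r}b_{i}\,d\mu\leq C\alpha^{r}\mu(B_{i})$.

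The new content is (\ref{egI}). Write $F=M\setminus\Omega$. On $F$, $g=f$, and Lebesgue differentiation applied to $\mathcal{M}T_{s}f\leq\alpha^{s}$ yields that $|f|$, $|\nabla f|$ and $V|f|$ are all bounded by $\alpha$ a.e. On $\Omega$ one has $g=\sum{}^{2}f_{B_{i}}\chi_{i}$; the bound $|g|\leq N\alpha$ follows from $|f_{B_{i}}|\leq\alpha$ (since $\overline{B_{i}}\cap F\neq\emptyset$) together with (\ref{rb}), while the estimate for $|\nabla g|$ is already contained in the proof of Proposition \ref{CZ}: writing $h_{1}=\sum{}^{1}f_{B_{i}}\nabla\chi_{i}$ and $h_{2}=\sum{}^{2}f_{B_{i}}\nabla\chi_{i}$, that argument shows $\|h_{1}+h_{2}\|_{\infty}\leq C\alpha$, and combined with $\|h_{1}\|_{\infty}\leq CN\alpha$ this gives $\|\nabla g\|_{\infty}=\|h_{2}\|_{\infty}\leq C\alpha$ on $\Omega$.

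The hard part will be the new estimate $\|Vg\|_{\infty}\leq C\alpha$, and this is where $RH_{\infty}$ is used essentially rather than just some finite $RH_{q}$. The idea is to invoke the pointwise bound $V(x)\leq CV_{B_{i}}$ for a.e. $x\in B_{i}$, which reduces matters to controlling $V_{B_{i}}|f_{B_{i}}|$ on each type 2 ball. Jensen's inequality gives
\[
(V_{B_{i}}|f_{B_{i}}|)^{s}\leq V_{B_{i}}^{s}(|f|^{s})_{B_{i}},
\]
and then the Fefferman--Phong bound (\ref{t}) from the previous proof combined with the type 2 condition $\min(V_{B_{i}}^{s},r_{i}^{-s})=V_{B_{i}}^{s}$ and $\overline{B_{i}}\cap F\neq\emptyset$ yields
\[
V_{B_{i}}^{s}(|f|^{s})_{B_{i}}\leq C(|\nabla f|^{s}+|Vf|^{s})_{B_{i}}\leq C\alpha^{s}.
\]
Taking $s$-th roots and summing over $i\in I_{x}$ with the bounded overlap, this gives $V|g|\leq C\alpha$ on $\Omega$ and completes (\ref{egI}); the a.e. gradient bound then upgrades $g$ to a genuinely Lipschitz function on $M$ by the standard chaining argument on a complete doubling manifold with Poincar\'e inequality.
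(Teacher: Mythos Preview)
Your proposal is correct and follows essentially the same route as the paper: reuse the construction of Proposition~\ref{CZ} verbatim, note that only the estimate~(\ref{egI}) is new, and for the crucial $\|Vg\|_{\infty}$ bound on $\Omega$ invoke the $RH_{\infty}$ pointwise inequality $V(x)\leq CV_{B_{i}}$ together with Jensen, the Fefferman--Phong estimate~(\ref{t}) on type~2 balls, and bounded overlap. The paper's own proof is terser but identical in substance.
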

\begin{proof} The only difference between the proof of this proposition and that of Proposition \ref{CZ} is the estimation (\ref{egI}). Indeed, as we have seen in the proof of Proposition \ref{CZ}, we have $|\nabla g|\leq C\alpha $ almost everywhere. By definition of $F$ and the differentiation theorem, $(|g|+|Vg|)$ is bounded by $\alpha$ almost everywhere on $F$. We have also seen that for all $i$, $|f|_{B_{i}}\leq \alpha$.
 Fix $x\in \Omega$, then 
 \begin{align*}
 |g(x)|&=|\sum_{i\in I_{x}}f_{B_{i}}|
 \\
 &\leq \sum_{i\in I_{x}}|f_{B_{i}}|
 \\
 &\leq N\alpha.
 \end{align*}
It remains to estimate $|Vg|(x)$. We have
\begin{align*}
|Vg|(x)&\leq \sum{}^2_{i:x\in B_{i}} V(x) |f_{B_{i}}|
\\
&\leq C\sum{}^2_{i:x\in B_{i}} (V_{B_{i}}) |f_{B_{i}}|
\\
&\leq C\sum{}^2_{i:x\in B_{i}} \left((V^{s})_{B_{i}} (|f|^{s})_{B_{i}}\right)^{\frac{1}{s}}
\\
&\leq C\sum{}^2_{i:x\in B_{i}} (|\nabla f|^{s}+|Vf|^{s})_{B_{i}}^{\frac{1}{s}}
\\
&\leq NC\alpha
\end{align*}
where we used the definition of $RH_{\infty}$, and Jensen's inequality as $s\geq 1$. We used also (\ref{t}) and the bounded overlap property of the $B_{i}$'s.
\end{proof}
\section{Estimation of the $K$-functional in the non-homogeneous case}
Denote for $1\leq r<\infty$, $T_{r}f= |f|^{r}+|\nabla f|^{r}+|Vf|^{r}$, $T_{r*}f= |f|^{r*}+|\nabla f|^{r*}+|Vf|^{r*}$, $T_{r**}f= |f|^{r**}+|\nabla f|^{r**}+|Vf|^{r**}$. We have $tT_{r**}f(t)=\int_{0}^{t}T_{r*}f(u)du$ for all $t>0$.
\begin{thm}\label{EKI} Under the same hypotheses as in Theorem \ref{IS}, with $V\in RH_{\infty loc}$ and $1\leq r\leq  s<\infty$:
\begin{itemize}
 \item[1.] there exists $C_{1}>0$ such that for every $f \in W_{r,V}^{1}+W_{\infty,V}^{1}$ and $t>0$ 
 \begin{equation*}
K(f,t^{\frac{1}{r}},W_{r,V}^{1},W_{\infty,V}^{1})\geq C_{1}\left(\int_{0}^{t}T_{r*}f(u)du\right)^{\frac{1}{r}}\sim \left(tT_{r**}f(t)\right)^{\frac{1}{r}};
\end{equation*}
\item[2.]  for $s\leq p<\infty$, there is $C_2>0$ such that for every $f\in W_{p,V}^{1}$ and $t>0$ 
\begin{equation*}
K(f,t^{\frac{1}{r}},W_{r,V}^{1},W_{\infty,V}^{1})\leq C_{2}t^{\frac{1}{r}}\left(T_{s**}f(t)\right)^{\frac{1}{s}}.
\end{equation*}  
\end{itemize}
In the particular case when $r=s$, we obtain the upper bound of $K$ for every $f\in W_{r,V}^{1}+W_{\infty,V}^{1}$ and get therefore a true characterization of $K$.
 \end{thm}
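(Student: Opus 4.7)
The plan is to prove the lower bound by a subadditivity argument on decreasing rearrangements and the upper bound by the Calder\'on--Zygmund splitting from Proposition \ref{CZI}, with the threshold $\alpha$ chosen as a rearrangement of a maximal function.

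For part 1, I fix any admissible decomposition $f=f_{0}+f_{1}$ with $f_{0}\in W_{r,V}^{1}$ and $f_{1}\in W_{\infty,V}^{1}$. The convexity inequality $(a+b)^{r}\leq 2^{r-1}(a^{r}+b^{r})$ applied to $|f|$, $|\nabla f|$, $|Vf|$ gives $T_{r}f\leq 2^{r-1}(T_{r}f_{0}+T_{r}f_{1})$, and subadditivity of the decreasing rearrangement applied termwise yields $T_{r*}f(u)\leq 2^{r-1}(T_{r*}f_{0}(u/2)+T_{r*}f_{1}(u/2))$. Integrating from $0$ to $t$, the first summand is bounded by $\int_{0}^{\infty}T_{r*}f_{0}\sim\|f_{0}\|_{W_{r,V}^{1}}^{r}$ and the second by $Ct\,\|T_{r}f_{1}\|_{\infty}\leq Ct\,\|f_{1}\|_{W_{\infty,V}^{1}}^{r}$. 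Taking $r$-th roots and the infimum over decompositions then yields the lower bound.

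For part 2, given $f\in W_{p,V}^{1}$ with $s\leq p<\infty$ and $t>0$, I choose $\alpha$ so that $\alpha^{s}=(\mathcal{M}T_{s}f)^{*}(t)$; by definition of the decreasing rearrangement $\mu(\Omega)\leq t$, where $\Omega=\{\mathcal{M}T_{s}f>\alpha^{s}\}$. Applying Proposition \ref{CZI} at level $\alpha$ gives $f=g+\sum_{i}b_{i}$ with $\|g\|_{W_{\infty,V}^{1}}\leq C\alpha$ and, by (\ref{eb}) with $r\leq s$, $\int_{B_{i}}T_{r}b_{i}\,d\mu\leq C\alpha^{r}\mu(B_{i})$. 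The bounded overlap combined with convexity gives $T_{r}(\sum_{i}b_{i})\leq N^{r-1}\sum_{i}T_{r}b_{i}$ pointwise, whence
\[
\Bigl\|\sum_{i}b_{i}\Bigr\|_{W_{r,V}^{1}}^{r}\leq C\alpha^{r}\sum_{i}\mu(B_{i})\leq C\alpha^{r}\mu(\Omega)\leq C\alpha^{r}t,
\]
and so $K(f,t^{1/r},W_{r,V}^{1},W_{\infty,V}^{1})\leq \|\sum_{i}b_{i}\|_{W_{r,V}^{1}}+t^{1/r}\|g\|_{W_{\infty,V}^{1}}\leq C\alpha\,t^{1/r}$. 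The classical equivalence $(\mathcal{M}h)^{*}\sim h^{**}$, combined with the subadditivity $(h_{1}+h_{2}+h_{3})^{**}\leq h_{1}^{**}+h_{2}^{**}+h_{3}^{**}$ applied to $h=T_{s}f$, gives $\alpha^{s}\leq CT_{s**}f(t)$, which delivers the upper bound.

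For the special case $r=s$, Proposition \ref{CZI} applies with $p=r$, so for $f\in W_{r,V}^{1}+W_{\infty,V}^{1}$ I pick a decomposition $f=f_{0}+f_{1}$, apply the above argument to $f_{0}$ to obtain $f_{0}=g_{0}+b$, and write $f=(g_{0}+f_{1})+b$. The subadditivity estimate from Part 1 allows $T_{r**}f_{0}(t)$ to be replaced by $T_{r**}f(t)$ up to a correction bounded by $\|f_{1}\|_{W_{\infty,V}^{1}}^{r}$, which is absorbed upon taking the infimum over $f=f_{0}+f_{1}$. The main obstacle is the identification $\alpha^{s}\sim T_{s**}f(t)$: subadditivity of $h\mapsto h^{**}$ is precisely what lets the single rearrangement $(T_{s}f)^{**}(t)$ be replaced by the sum of three rearrangements $T_{s**}f(t)$, making the convenient quantity $T_{s**}f(t)$ appear on the right-hand side rather than the less tractable $(T_{s}f)^{**}(t)$.
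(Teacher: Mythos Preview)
Your argument for Parts 1 and 2 follows the same scheme as the paper's analogous proof of Theorem~\ref{EK}: a rearrangement lower bound and the Calder\'on--Zygmund splitting for the upper bound. Two points, however, need attention.

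\textbf{Local versus global hypotheses.} Theorem~\ref{EKI} is stated under the hypotheses of Theorem~\ref{IS}, i.e.\ $(D_{loc})$, $(P_{sloc})$ and $V\in RH_{\infty loc}$, whereas Proposition~\ref{CZI} is stated and proved under the \emph{global} hypotheses $(D)$, $(P_{s})$, $V\in RH_{\infty}$. Your Part~2 invokes Proposition~\ref{CZI} directly, which is only justified in the global setting. As the paper does for Theorem~\ref{EK} (Subsection~4.2), the local case requires the extra localization step: cover $M$ by balls $B^{j}=B(x_{j},1)$ with bounded overlap, introduce a partition of unity $(\varphi_{j})$, run the Calder\'on--Zygmund decomposition on each $f_{j}=f\varphi_{j}$ inside $B^{j}$ using Lemma~\ref{DB}, and then sum. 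Without this, your proof only covers the global case.

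\textbf{The absorption step when $r=s$.} Your extension to $f\in W_{r,V}^{1}+W_{\infty,V}^{1}$ yields, after the subadditivity correction,
\[
K(f,t^{1/r})\leq C\,t^{1/r}(T_{r**}f(t))^{1/r}+C\,t^{1/r}\|f_{1}\|_{W_{\infty,V}^{1}},
\]
valid for every decomposition $f=f_{0}+f_{1}$. Taking the infimum only gives $\inf t^{1/r}\|f_{1}\|\leq K(f,t^{1/r})$, hence $K\leq C\,t^{1/r}(T_{r**}f(t))^{1/r}+C\,K$ with $C>1$, which is circular. The clean fix is not to split $f$ first, but to apply the Calder\'on--Zygmund decomposition directly to $f$ at level $\alpha^{r}=(\mathcal{M}T_{r}f)^{*}(t)$: for $f\in W_{r,V}^{1}+W_{\infty,V}^{1}$ one has $T_{r}f\in L_{1}+L_{\infty}$, so $\mathcal{M}T_{r}f$ is finite a.e.\ and $\mu(\Omega)\leq t$; the proof of Proposition~\ref{CZI} uses only that $\Omega$ is open, $\Omega\neq M$, and the pointwise control from $\overline{B_{i}}\cap F\neq\emptyset$, none of which requires $f\in W_{p,V}^{1}$ for a specific $p$. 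This is the route implicit in the paper's reference and avoids the absorption issue altogether.
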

 \begin{proof} We refer to \cite{badr1} for an analogous proof.
 \end{proof}
\begin{thm}\label{EK} We consider the same hypotheses as in Theorem \ref{IS} with $V\in RH_{qloc}$ for some $1<q<\infty$. Then
\begin{itemize}
\item[1.] there exists $C_{1}$ such that for every $f \in W_{r,V}^{1}+W_{q,V}^{1}$ and $t>0$
$$
K(f,t, W_{r,V}^{1},W_{q,V}^{1})\geq C_{1}\left(t^{\frac{q}{q-r}}(T_{r**}f)^{\frac{1}{r}}(t^{\frac{qr}{q-r}})+t \left(\int_{t^{\frac{qr}{q-r}}}^{\infty}T_{r*}f(u)du\right)^{\frac{1}{r}}\right);
$$
\item[2.] for $s\leq p<q$,
 there is $C_{2}$ such that for every $f\in W_{p,V}^{1}$ and $t>0$
$$ K(f,t, W_{r,V}^{1},W_{q,V}^{1})\leq C_{2}\left(t^{\frac{q}{q-r}}(T_{s**}f)^{\frac{1}{s}}(t^{\frac{qr}{q-r}})+t\left(\int_{t^{\frac{qr}{q-r}}}^{\infty} \left(\mathcal{M}T_{s}f\right)^{*\frac{q}{s}}(u) du\right)^{\frac{1}{q}}\right).
$$
\end{itemize}
\end{thm}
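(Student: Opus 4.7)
\emph{Strategy.} For the lower bound (part 1) I would reduce to a Holmstedt-type inequality for the scalar quantity $T_r f$ in the pair $(L_1, L_{q/r})$. For the upper bound (part 2) I would apply the Calder\'on--Zygmund decomposition (Proposition \ref{CZ}) at a level $\alpha$ chosen so that the bad set $\Omega$ has measure comparable to $t^{\beta}$, where $\beta := qr/(q-r)$.

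\emph{Lower bound.} Fix any decomposition $f = f_0 + f_1$ with $f_0 \in W_{r,V}^1$, $f_1 \in W_{q,V}^1$. Convexity gives $T_r f \leq C(T_r f_0 + T_r f_1)$ pointwise, so the non-negative function $T_r f$ admits a decomposition $T_r f = h_0 + h_1$ with $0 \leq h_0 \leq C T_r f_0$ and $0 \leq h_1 \leq C T_r f_1$ (take $h_0 = \min(T_r f, C T_r f_0)$). Since $\|T_r f_0\|_{L_1} = \|f_0\|_{W_{r,V}^1}^r$ and $(T_r f_1)^{q/r} \sim T_q f_1$ gives $\|T_r f_1\|_{L_{q/r}} \sim \|f_1\|_{W_{q,V}^1}^r$, one obtains
\[
K(T_r f, t^r, L_1, L_{q/r}) \leq \|h_0\|_{L_1} + t^r \|h_1\|_{L_{q/r}} \leq C\,K(f, t, W_{r,V}^1, W_{q,V}^1)^r.
\]
Holmstedt's theorem (Theorem \ref{H}) applied to $(L_1, L_{q/r})$ with exponent $q/(q-r)$ then gives
\[
K(T_r f, t^r, L_1, L_{q/r}) \sim \int_0^{t^{\beta}} (T_r f)^*(u)\,du + t^r \left(\int_{t^{\beta}}^\infty (T_r f)^{*q/r}(u)\,du\right)^{r/q}.
\]
Using $(T_r f)^* \sim T_{r*} f$ (subadditivity of rearrangement on the three-term sum $T_r f$) and taking $r$-th roots produces the claimed lower bound; in particular the first term is $\left(\int_0^{t^\beta} T_{r*} f\right)^{1/r} = t^{q/(q-r)}(T_{r**} f)^{1/r}(t^\beta)$.

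\emph{Upper bound.} Set $\alpha := ((\mathcal{M} T_s f)^*(t^\beta))^{1/s}$, so $\mu(\Omega) = \mu(\{\mathcal{M} T_s f > \alpha^s\}) \leq t^\beta$. Apply Proposition \ref{CZ} at this level to obtain $f = g + b$ with $b = \sum_i b_i$. The Remark after Proposition \ref{CZ} together with (\ref{5}) gives
\[
\|b\|_{W_{r,V}^1}^r \leq C \sum_i \|b_i\|_{W_{r,V}^1}^r \leq C \alpha^r \sum_i \mu(B_i) \leq C N \alpha^r t^\beta.
\]
Since $(\mathcal{M} T_s f)^* \sim T_{s**} f$ by the maximal theorem, $\alpha \sim (T_{s**} f(t^\beta))^{1/s}$, hence $\|b\|_{W_{r,V}^1} \leq C\,t^{q/(q-r)} (T_{s**} f)^{1/s}(t^\beta)$, which is the first RHS term. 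For $g$, split $\int T_q g\,d\mu = \int_\Omega T_q g\,d\mu + \int_F T_q f\,d\mu$ (using $g=f$ on $F$). On $F$ one has $T_q f \leq 3(T_s f)^{q/s} \leq 3(\mathcal{M} T_s f)^{q/s}$ a.e., and the identity $\int_F \phi^{q/s}\,d\mu = \int_{\mu(\Omega)}^\infty \phi^{*q/s}(u)\,du$ for $\phi = \mathcal{M} T_s f$ yields
\[
\int_F T_q f\,d\mu \leq 3 \int_{\mu(\Omega)}^\infty (\mathcal{M} T_s f)^{*q/s}(u)\,du \leq 3 \int_{t^\beta}^\infty (\mathcal{M} T_s f)^{*q/s}(u)\,du + 3 t^\beta \alpha^q,
\]
the last inequality because $(t^\beta - \mu(\Omega))\,(\mathcal{M} T_s f)^{*q/s}(\mu(\Omega)) \leq t^\beta \alpha^q$. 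Combined with $\int_\Omega T_q g \leq C\alpha^q\mu(\Omega) \leq C\alpha^q t^\beta$ from (\ref{2}), we obtain $\int T_q g \leq C\int_{t^\beta}^\infty (\mathcal{M} T_s f)^{*q/s} + Ct^\beta \alpha^q$. Taking $q$-th roots and multiplying by $t$, the stray term $t(t^\beta\alpha^q)^{1/q} = t^{q/(q-r)} \alpha$ is absorbed into the first RHS term, leaving the second term $t\left(\int_{t^\beta}^\infty (\mathcal{M} T_s f)^{*q/s}\right)^{1/q}$ as required.

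\emph{Main difficulty.} The most delicate step is the rearrangement estimate $\int_F (\mathcal{M} T_s f)^{q/s} \leq \int_{t^\beta}^\infty (\mathcal{M} T_s f)^{*q/s} + t^\beta \alpha^q$: the $F$-integral naturally starts at $\mu(\Omega)$ rather than $t^\beta$, and the discrepancy must be absorbed into the leading term of the RHS. The Fefferman--Phong inequality (Lemma \ref{FP}) enters only implicitly here, via its role inside the proof of Proposition \ref{CZ}, where it controls the $V f_{B_i}$ contribution on type-2 balls in the estimate of $\|b_i\|_{W_{s,V}^1}$.
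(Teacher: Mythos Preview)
Your argument is essentially correct for the \emph{global} case and matches the paper's strategy there, with two minor variations. For the lower bound, the paper applies Holmstedt (Theorem~\ref{H}) directly in the pair $(L_r,L_q)$ to each of $f$, $|\nabla f|$, $Vf$ via the bounded operator $(I,\nabla,V\cdot):W^1_{l,V}\to L_l$, and sums; you instead pass to the scalar $T_rf$ and apply Holmstedt in $(L_1,L_{q/r})$. Both work, but your route requires the comparison $(T_rf)^*\sim T_{r*}f$, which only holds up to a dilation of the argument ($(h_1+h_2+h_3)^*(t)\le h_1^*(t/3)+h_2^*(t/3)+h_3^*(t/3)$); this is harmless after integration but should be said. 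For the upper bound, your rearrangement identity $\int_F\phi^{q/s}=\int_{\mu(\Omega)}^\infty\phi^{*q/s}$ (valid because $\Omega$ is a superlevel set of $\phi=\mathcal{M}T_sf$) together with the bound $\phi^*(\mu(\Omega))\le\alpha^s$ replaces the paper's auxiliary set $E_t$ of exact measure $t^\beta$; the two manoeuvres are equivalent.

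The genuine gap is that you do not address the \emph{local} case at all. The hypotheses of the theorem are those of Theorem~\ref{IS}: only $(D_{loc})$, $(P_{sloc})$ and $V\in RH_{qloc}$ are assumed, whereas Proposition~\ref{CZ} (which you invoke) requires global $(D)$, $(P_s)$ and $V\in RH_q$. The paper handles this in a separate subsection: one covers $M$ by unit balls $B^j$ with bounded overlap, takes a subordinate partition of unity $(\varphi_j)$, and runs the Calder\'on--Zygmund decomposition on each $f_j=f\varphi_j$ inside $B^j$, using Lemma~\ref{DB} (relative doubling of the $B^j$) and the comparison $\mathcal{M}_{B^j}T_sf_j\le C\,\mathcal{M}T_sf$ to keep constants uniform. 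Summing over $j$ reproduces the global estimate. Without this localisation step your proof is incomplete under the stated hypotheses.
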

 \begin{proof} In a first step we prove this theorem in the global case. This will help to understand the proof of the more general local case. 
\subsection{The global case}
Let $M$ be a complete Riemannian manifold satisfying $(D)$. Let $V\in RH_{q}$ for some $1<q<\infty$ and assume that $M$ admits  a Poincar\'{e} inequality $(P_{s})$ for some $1\leq s<q$. The principal tool to prove Theorem \ref{EK} in this case will be the Calder\'{o}n-Zygmund decomposition of Proposition \ref{CZ}.

 We prove the left inequality by applying Theorem \ref{H} with $p_{0}=r$ and $p_{1}=q$ which gives for all $f\in L_{r}+L_{q}$:
$$
K(f,t,L_{r},L_{q})\sim \left(\int_{0}^{t^{\frac{qr}{q-r}}}f^{*r}(u)du\right)^{\frac{1}{r}}+t \left(\int_{t^{\frac{qr}{q-r}}}^{\infty}f^{*q}(u)du\right)^{\frac{1}{q}}.
$$
Moreover, we have
$$
K(f,t, W_{r,V}^{1}, W_{q,V}^{1})\geq K(f,t,L_{r},L_{q})+K(|\nabla f|,t,L_{r},L_{q})+K( Vf,t,L_{r},L_{q})
$$
since the operator
$$
(I,\,\nabla,\, V): W_{l,V}^{1}\rightarrow L_{l}(M;\mathbb{C}\times TM\times \mathbb{C})
$$
is bounded for every $1\leq l\leq \infty$.
\\
Hence we conclude with
\begin{align*}
K(f,t,W_{r,V}^{1},W_{q,V}^{1})&\geq  C\left(\int_{0}^{t^{\frac{qr}{q-r}}}T_{r*}f(u)du\right)^{\frac{1}{r}}+Ct \left(\int_{t^{\frac{qr}{q-r}}}^{\infty}T_{q*}f(u)du\right)^{\frac{1}{q}}.
 \end{align*}

 We now prove item 2. Let $f\in W_{p,V}^{1},\,s\leq p<q$ and $t>0$. We consider the Calder\'{o}n-Zygmund decomposition of $f$ given by Proposition \ref{CZ} with $\alpha=\alpha(t)=(\mathcal{M}T_{s}f)^{*\frac{1}{s}}(t^{\frac{qr}{q-r}})$. Thus $f$ can be written as $f=b+g$ with $b= \sum\limits_{i}b_{i}$ where $(b_{i})_{i},\,g$ satisfy the properties of the proposition. For the $L_{r}$ norm of $b$ we  have 
 \begin{align*}
\| b \|_{r}^{r}&\leq \int_{M}(\sum_{i}
|b_{i}|)^{r}d\mu
\\
&\leq N \sum_{i}\int_{B_{i}}
|b_{i}|^{r}d\mu
\\
&\leq
C\alpha^{r}(t)\sum_{i}\mu(B_{i})
\\
&\leq N C\alpha^{r}(t)\mu(\Omega_{t}).
\end{align*}
This follows from the fact that
$\displaystyle \sum_{i}\chi_{B_{i}}\leq N$ and
$\Omega_{t}=\Omega=\underset{i}{\bigcup}B_{i}$. Similarly we get $\| \,|\nabla b|\,\|_{r}^{r}\leq
C\alpha^{r}(t)\mu(\Omega_{t})\,$ and $\,\|Vb\|_{r}^{r}\leq C\alpha^{r}(t)\mu(\Omega_{t})$. For $g$ we have $\|g\|_{W_{q,V}^{1}}\leq C\alpha(t) \mu(\Omega_{t})^{\frac{1}{q}}+\left(\int_{F_{t}}T_{q}f d\mu\right)^{\frac{1}{q}}$, where $F_{t}=F$ in the Proposition \ref{CZ} with this choice of $\alpha$.
 
Moreover, since $(\mathcal{M}f)^{*}\sim f^{**}$ and $(f+g)^{**}\leq f^{**}+g^{**}$, we obtain
$$
\alpha(t)=(\mathcal{M}T_{s}f)^{*\frac{1}{s}}(t^{\frac{qr}{q-r}})\leq C (T_{s**}f)^{\frac{1}{s}}(t^{\frac{qr}{q-r}}).
$$
 Notice that for every $t>0$, $\mu(\Omega_{t})\leq t^{\frac{qr}{q-r}}$. It comes that
\begin{equation}\label{K}
 K(f,t,W_{r,V}^{1},W_{q,V}^{1})\leq
Ct^{\frac{q}{q-r}}(T_{s**}f)^{\frac{1}{s}}(t^{\frac{qr}{q-r}})+Ct\left(\int_{F_{t}} T_{q}f d\mu\right)^{\frac{1}{q}}.
\end{equation} 
Let us estimate $\int_{F_{t}} T_{q}f d\mu$. Consider $E_{t}$ a measurable set such that 
$$
\Omega_{t}\subset E_{t}\subset\left\lbrace x: \mathcal{M}T_{s}f(x)\geq (\mathcal{M}T_{s}f)^{*}(t^{\frac{qr}{q-r}})\right\rbrace 
$$
 and $\mu(E_{t})=t^{\frac{qr}{q-r}}$. Remark that $\int_{E_{t}}(\mathcal{M}T_{s}f)^{l}d\mu=\int_{0}^{t^{\frac{qs}{q-s}}}(\mathcal{M}T_{s}f)^{*l}(u)du\,$ for $l\geq 1$ --see \cite{stein3},\,Chapter V, Lemma 3.17--. Denote $G_{t}:=E_{t}-\Omega_{t}$.
Then
\begin{align}\label{K+}
\int_{F_{t}} T_{q}f d\mu &=\int_{E_{t}^{c}} T_{q}f d\mu+\int_{G_{t}} T_{q}f d\mu \nonumber
\\
&\leq C \int_{t^{\frac{qr}{q-r}}}^{\infty}(\mathcal{M}T_{s}f)^{*\frac{q}{s}}(u)du+C\int_{G_{t}}(T_{s**}f)^{\frac{q}{s}}(t^{\frac{qr}{q-r}})d\mu \nonumber
\\
&\leq C \int_{t^{\frac{qr}{q-r}}}^{\infty}(\mathcal{M}T_{s}f)^{*\frac{q}{s}}(u)du+C\mu(E_{t})(T_{s**}f)^{\frac{q}{s}}(t^{\frac{qr}{q-r}}) \nonumber
\\
&=C\int_{t^{\frac{qr}{q-r}}}^{\infty}(\mathcal{M}T_{s}f)^{*\frac{q}{s}}(u)du+Ct^{\frac{qr}{q-r}}(T_{s**}f)^{\frac{q}{s}}(t^{\frac{qr}{q-r}}).
\end{align}
Combining (\ref{K}) and (\ref{K+}) we deduce that
 \begin{align*}
 K(f,t,W_{r,V}^{1},W_{q,V}^{1})&\leq
Ct^{\frac{q}{q-r}}(T_{s**}f)^{\frac{1}{s}}(t^{\frac{qr}{q-r}})+Ct\left(\int_{t^{\frac{qr}{q-r}}}^{\infty}(\mathcal{M}T_{s}f)^{*\frac{q}{s}}(u)du\right)^{\frac{1}{q}}
\end{align*}
which finishes the proof in that case. 
\subsection{The local case}
Let $M$ be a complete non-compact Riemannian manifold satisfying a local doubling property $(D_ {loc})$. Consider $V\in RH_{qloc}$ for some $1<q<\infty$ and assume that $M$ admits a local Poincar\'{e} inequality $(P_{sloc})$ for some $1\leq s<q$.

Denote by $\mathcal{M}_{E}$ the Hardy-Littlewood maximal operator relative to a measurable subset 
$E$ of $M$, that is, for  $x\,\in E$ and every $f$ locally integrable function on $M$:
$$
\displaystyle\mathcal{M}_{E}f(x)= \sup_{B:\,x\in
B}\frac{1}{\mu(B\cap E)}\int_{B \cap E}|f|d\mu
$$
where $B$ ranges over all open balls of $M$ containing $x$ and centered in $E$.
We say that a measurable subset $E$ of $M$ has the relative doubling property if there exists a constant $C_{E}$ such that for all $x\in E$ and $r>0$ we have
$$
\mu(B(x,2r)\cap E)\leq C_{E}\mu(B(x,r)\cap E).
$$
This is equivalent to saying that the metric measure space $(E,d/E,\mu/E)$ has the doubling property.
On such a set $\mathcal{M}_{E}$ is of weak type $(1,1)$ and bounded on $L^{p}(E,\mu),\,1<p\leq\infty$.

We now prove Theorem \ref{EK} in the local case. To fix ideas, we assume $r_{0}=5$, $r_{1}=8$, $r_{2}=2$. The lower bound of $K$ in item 1. is trivial (same proof as for the global case). It remains to prove the upper bound. 
For all $t>0$,
 take $\alpha=\alpha(t)=(\mathcal{M}T_{s}f)^{*\frac{1}{s}}(t^{\frac{qs}{q-s}})$.

Consider 
$$
\Omega=\left\lbrace x\in M:\mathcal{M}T_{s}f(x)>\alpha^{s}(t)\right\rbrace.
$$
We have $\mu(\Omega)\leq t^{\frac{qr}{q-r}}$. If $\Omega=M$  then
\begin{align*}
 \int_{M}T_{r}f\,d\mu
 &=\int_{\Omega}T_{r}f\,d\mu
\\
&\leq C\int_{0}^{\mu(\Omega)}T_{r*}f(l)dl
\\
&\leq C \int_{0}^{t^{\frac{qr}{q-r}}}T_{r*}f(l)dl
\\
&\leq Ct^{\frac{qr}{q-r}}(T_{r**}f)^{\frac{1}{r}}(t^{\frac{qr}{q-r}})
\end{align*}
Therefore
$$
 K(f,t,W_{r,V}^{1},W_{q,V}^{1})\leq Ct^{\frac{q}{q-r}}(T_{s**}f)^{\frac{1}{s}}(t^{\frac{qr}{q-r}})
$$
since $r\leq s$.
We thus obtain item 2. in this case. 

Now assume $\Omega \neq M$.
Pick a countable set
$\left\lbrace x_{j}\right\rbrace _{j\in J} \subset  M,$ such that $ M=
\underset{j\in J}{\bigcup}B(x_{j},\frac{1}{2})$ and for all $x\in M$,
$x$ does not belong to more than $N_{1}$ balls $B^{j}:=B(x_{j},1)$.
Consider a $C^{\infty}$ partition of unity $(\varphi_{j})_{j\in J}$ subordinated to the balls $\frac{1}{2}B^{j}$ such that $0\leq
\varphi_{j}\leq 1,\,\\supp\,\,\varphi_{j}\subset B^{j}$  and
$\|\,|\nabla \varphi_{j}|\, \|_{\infty}\leq C$ uniformly with respect to $j$.
 Consider $f\in W_{p,V}^{1}$, $s\leq p<q$. Let $ f_{j}=f\varphi_{j}$ so that $ f=\sum_{j\in J}f_{j}$. We have for $j\in J$, $f_{j},\, Vf_{j}\in L_{p}$ and $\;\nabla f_{j}=f\nabla
\varphi_{j}+\nabla f \varphi_{j} \in L_{p}$. Hence $f_{j}\in W_{p}^{1}(B^{j})$. The balls $B^{j}$ satisfy  the relative doubling property with the constant independent of the balls $B^{j}$. This follows from the next lemma quoted from \cite{auscher2} p.947.
\begin{lem}\label{DB}
Let $M$ be a  complete Riemannian manifold satisfying
$(D_ {loc})$. Then the balls $B^{j}$ above, 
equipped with the induced distance and measure, satisfy the
relative doubling property $(D)$, with the doubling constant that may be chosen independently of $j$.
More precisely, there exists $C\geq0$ such that for all $j\in J$
\begin{equation}
 \mu(B(x,2R)\cap B^{j})\leq
C\,\mu(B(x,R)\cap B^{j})\label{DB1}
\quad\forall x\in B^{j},\,R>0,
\end{equation}
and
\begin{equation}
\mu(B(x,R))\leq C\mu(B(x,R)\cap B^{j})\label{DB2}
\quad \forall x\in B^{j},\,0<R\leq 2.
 \end{equation}
\end{lem}

\noindent Let us return to the proof of the theorem. 
 For any $x\in B^{j}$ we have
\begin{align}
 \mathcal{M}_{B^{j}}T_{s}f_{j}(x)
 &=\sup_{B:\,x\in B,\,R(B)\leq 2}\frac{1}{\mu(B^{j}\cap
B)}\int_{B^{j}\cap B}T_{s}f_{j}d\mu \nonumber
\\
&\leq \sup_{B:\,x\,\in B,\;R(B)\leq 2}C\frac{\mu(B)}{\mu(B^{j}\cap B)}\frac{1}{\mu(B)}\int_{B}T_{s}fd\mu \nonumber
\\
&\leq C\mathcal{M}T_{s}f(x).\label{MB}
\end{align}
where we used (\ref{DB2}) of Lemma \ref{DB}. Consider now 
$$
\Omega_{j}=\left\lbrace x\in
B^{j}:\mathcal{M}_{B^{j}}T_{s}f_{j}(x)>C\alpha^{s}(t)\right\rbrace
$$
 where $C$ is the constant in
(\ref{MB}). The set $\Omega_{j}$ is an open subset of $B^{j}$ then of $M$ and $\Omega_{j}\subset \Omega $ for all $j \in J$. 
For the $f_{j}$'s, and for all $t>0$, we have a Calder\'{o}n-Zygmund decomposition similar to the one done in Proposition \ref{CZ}:
there exist $b_{jk},\;g_{j}$ supported in $B^{j}$, and balls $(B_{jk})_{k}$ of $M$, contained in $\Omega_{j}$, such that
\begin{equation}
f_{j} = g_{j}+\sum_{k}b_{jk} \label{f1}
\end{equation}
\begin{equation}
\int_{\Omega_{j}} T_{q}g_{j}\,d\mu \leq C \alpha^{q}(t)\mu(\Omega_{j}) \label{eg1}
\end{equation}
\begin{equation}
\\supp\,\, b_{jk}\subset B_{jk},\,\forall 1\leq r\leq s\;  \int_{B_{jk}}T_{r}b_{jk}\, d\mu
\leq C\alpha^{r}(t)\mu(B_{jk})\label{eb1}
\end{equation}
\begin{equation}
\sum_{k}\mu(B_{jk})\leq C\alpha^{-p}(t)\int_{B^{j}} T_{p}f_{j}\,d\mu \label{emB1}
\end{equation}
\begin{equation}
\sum_{k}\chi_{B_{jk}}\leq N \label{eiB1}
\end{equation}
with $C$ and $N$ depending only on $q$, $p$ and the constant $C(r_{0}),C(r_{1}), C(r_{2})$ in
$(D_{loc})$ and $(P_{sloc})$ and the $RH_{qloc}$ condition of $V$, which is independent of $B^{j}$.
\\
The proof of this decomposition is the same as that of Proposition \ref{CZ}, taking for all $j\in J$ a Whitney decomposition  $(B_{jk})_{k}$ of $\Omega_{j}\neq M$ and using the doubling property for balls whose radii do not exceed $3<r_{0}$ and the Poincar\'{e} inequality for balls whose radii do not exceed $7<r_{1}$ and the $RH_{qloc}$ property of $V$ for balls whose radii do not exceed $1<r_{2}$.
By the above decomposition we can write $ f=\sum\limits _{j\in J}\sum\limits_{k}b_{jk}+\sum\limits _{j\in J}g_{j}=b+g$.
Let us now estimate $\|b\|_{W_{r,V}^{1}}$ and $\|g\|_{W_{q,V}^{1}}$. 
\begin{align*} 
\|b\|_{r}^{r}&\leq N_{1}N\sum_{j}
\sum_{k}\|b_{jk}\|_{r}^{r}
\\
&\leq C\alpha^{r}(t) \sum_{j}\sum_{k}(\mu(B_{jk}))
\\
&\leq NC\alpha^{r}(t) \Bigl(\sum_{j}\mu(\Omega_{j})\Bigr)
\\
&\leq N_{1}C\alpha^{r}(t) \mu(\Omega).
\end{align*}
We used the bounded overlap property of the $(\Omega_{j})_{j\in J}$'s and that of the 
$(B_{jk})_{k}$'s for all $j\in J$. 
It follows that $\| b\|_{r}\leq C\alpha(t)\mu(\Omega)^{\frac{1}{r}}$.
Similarly we get $\|\,|\nabla b|\, \|_{r}\leq C\alpha(t)\mu(\Omega)^{\frac{1}{r}}$ and $\|V b\|_{r}\leq C\alpha(t)\mu(\Omega)^{\frac{1}{r}}$.

For $g$ we have
\begin{align*}
 \int_{\Omega}|g|^{q}d\mu & \leq N \sum_{j}\int_{\Omega_{j}}|g_{j}|^{q}d\mu
 \\
 &\leq NC\alpha^{q}(t)\sum_{j}\mu(\Omega_{j})
 \\
 &\leq N_{1}NC\alpha^{q}(t)\mu(\Omega).
\end{align*}
Analogously $\int_{\Omega}|\nabla g|^{q}d\mu\leq
C\alpha^{q}(t)\mu(\Omega)$ and $\int_{\Omega}|Vg|^{q}d\mu \leq C\alpha^{q}(t)\mu(\Omega)$. Noting that $g\in W_{q,V}^{1}$ --same argument as in the proof of the global case--, it follows that
\begin{align*}
 K(f,t,W_{r,V}^{1},W_{q,V}^{1})
&\leq \| b \|_{W_{r}^{1}}+t\|g\|_{W_{q}^{1}}
\\
&\leq C
\alpha(t)\mu(\Omega)^{\frac{1}{r}}+Ct\alpha(t)\mu(\Omega)^{\frac{1}{q}}+t\left(\int_{F_{t}}T_{q}f d\mu\right)^{\frac{1}{q}}
\\
&\leq Ct^{\frac{q}{q-r}}(T_{s**}f)^{\frac{1}{s}}(t^{\frac{qr}{q-r}})+t\left(\int_{t^{\frac{qr}{q-r}}}^{\infty}(\mathcal{M}T_{s}f)^{*\frac{q}{s}}(u)du\right)^{\frac{1}{q}}.
\end{align*}
Thus, we get the desired estimation for $f\in W_{p,V}^{1}$.
\end{proof}
\section{Interpolation of non-homogeneous Sobolev spaces}
\begin{proof}[Proof of Theorem \ref{IS}]  The proof of the case when $V\in RH_{\infty loc}$ is the same as the one in section 4 in \cite{badr1}.
Consider now $V\in RH_{qloc}$ for some $1<q<\infty$. For $1\leq r\leq s< p<q$, we define the interpolation space $W_{p,r,q,V}^{1}(M)=W_{p,r,q,V}^{1}$ between $W_{r,V}^{1}$ and $W_{q,V}^{1}$ by
$$
W_{p,r,q,V}^{1}=(W_{r,V}^{1},W_{q,V}^{1})_{\frac{q(p-r)}{p(q-r)},p}.
$$
We claim that $W_{p,r,q,V}^{1}=W_{p,V}^{1}$ with equivalent norms. Indeed, let $f\in W_{p,r, q,V}^{1}$. We have 
\begin{align*}
\|f\|_{\frac{q(p-r)}{p(q-r)},p}=&\left\lbrace\int_{0}^{\infty} \left(t^{\frac{q(r-p)}{p(q-r)}}K(f,t,W_{r,V}^{1},W_{q,V}^{1})\right)^{p}\frac{dt}{t}\right\rbrace^{\frac{1}{p}}
\\
&\geq \left\lbrace \int_{0}^{\infty}\left(t^{\frac{q(r-p)}{p(q-r)}}t^{\frac{q}{q-r}}(T_{r**}f)^{\frac{1}{r}}(t^{\frac{qr}{q-r}})\right)^{p}\frac{dt}{t}\right\rbrace ^{\frac{1}{p}}
\\
&=\left\lbrace\int_{0}^{\infty}t^{\frac{qr}{q-r}-1}(T_{r**}f)^{\frac{p}{r}}(t^{\frac{qr}{q-r}})dt\right\rbrace^{\frac{1}{p}}
\\
&= \left\lbrace \int_{0}^{\infty}(T_{r**}f)^{\frac{p}{r}}(t)dt\right\rbrace^{\frac{1}{p}}
\\
&\geq \|f^{r**}\|_{\frac{p}{r}}^{\frac{1}{r}}+\| \,|\nabla f|^{r**}\|_{\frac{p}{r}}^{\frac{1}{r}}+\|\,|Vf|^{r**}\|_{\frac{p}{r}}^{\frac{1}{r}}
\\
&\sim \|f^{r}\|_{\frac{p}{r}}^{\frac{1}{r}}+\|\,|\nabla f|^{r}\|_{\frac{p}{r}}^{\frac{1}{r}}+\|\,|Vf|^{r}\|_{\frac{p}{r}}^{\frac{1}{r}}
\\
&= \|f\|_{W_{p,V}^{1}}
\end{align*}
where we used that for $l>1$, $\|f^{**}\|_{l}\sim \|f\|_{l}$.
Therefore $W_{p,r,q,V}^{1}\subset W_{p,V}^{1}$, with $\|f\|_{\frac{q(p-r)}{p(q-r)},p}\geq C \|f\|_{W_{p,V}^{1}}$.
 
On the other hand, let $f\in W_{p,V}^{1}$. By the Calder\'{o}n-Zygmund decomposition of Proposition \ref{CZ}, $f\in W_{r,V}^{1}+W_{q,V}^{1}$. Next, 
\begin{align*}
\|f\|_{\frac{q(p-r)}{p(q-r)},p}& \leq C\left\lbrace \int_{0}^{\infty} \left(t^{\frac{q(r-p)}{p(q-r)}}t^{\frac{q}{q-r}}(T_{s**}f)^{\frac{1}{s}}(t^{\frac{qr}{q-r}})\right)^{p}\frac{dt}{t}\right\rbrace ^{\frac{1}{p}}
\\
&+C\left\lbrace\int_{0}^{\infty}\left(t^{\frac{q(r-p)}{p(q-r)}}t\left(\int_{t^{\frac{qr}{q-r}}}^{\infty }(\mathcal{M}T_{s}f)^{*\frac{q}{s}}(u)du\right)^{\frac{1}{q}}\right)^{p}\frac{dt}{t}\right\rbrace ^{\frac{1}{p}}
\\
&=I+\,II.
\end{align*}
Using the same computation as above, we conclude that
\begin{align*}
I&\leq C \left\lbrace \int_{0}^{\infty}(T_{s**}f)^{\frac{p}{s}}(t)dt\right\rbrace^{\frac{1}{p}}
\\
&\leq C\|f\|_{W_{p,V}^{1}}.
\end{align*}
It remains to estimate II. We have
\begin{align*}
II&\leq C\left\lbrace \int_{0}^{\infty} t^{\frac{q(r-p)}{q-r}}t^{p}\left(\int_{t^{\frac{qr}{q-r}}}^{\infty}(\mathcal{M}T_{s}f)^{*\frac{q}{s}}(u)du\right)^{\frac{p}{q}}\frac{dt}{t} \right\rbrace^{\frac{1}{p}}
\\
&\leq C\left\lbrace \int_{0}^{\infty} t^{-\frac{p}{q}}\left(\int_{t}^{\infty}\left(u(\mathcal{M}T_{s}f)^{*\frac{q}{s}}(u)\right)\frac{du}{u}\right)^{\frac{p}{q}}dt\right\rbrace^{\frac{1}{p}}
\\
&\leq C\left\lbrace \int_{0}^{\infty} t^{-\frac{p}{q}}\left(\int_{t}^{\infty}\left(u(\mathcal{M}T_{s}f)^{*\frac{q}{s}}(u)\right)^{\frac{p}{q}}\frac{du}{u}\right)dt\right\rbrace^{\frac{1}{p}}
\\
&\leq \frac{C}{1-\frac{p}{q}}\left\lbrace \int_{0}^{\infty} t^{-\frac{p}{q}}(t(t^{\frac{p}{q}-1}(\mathcal{M}T_{s}f)^{*\frac{p}{s}}(t)))dt\right\rbrace^{\frac{1}{p}}
\\
&= C\|(\mathcal{M}T_{s}f)^{*}\|_{\frac{p}{s}}^{\frac{1}{s}}
\\
&\leq C\|\mathcal{M}T_{s}f\|_{\frac{p}{s}}^{\frac{1}{s}}
\\
&\leq C\|T_{s}f\|_{\frac{p}{s}}^{\frac{1}{s}}
\\
&\leq C\|f\|_{W_{p,V}^{1}}.
\end{align*}
We used the monotonicity of $(\mathcal{M}T_{s}f)^{*}$ together with $\frac{p}{q}<1$, the following Hardy inequality 
$$
\int_{0}^{\infty}\left[\int_{t}^{\infty}g(u)du\right]t^{l-1}dt\leq \left(\frac{1}{l}\right)\int_{0}^{\infty}\left[ug(u)\right]u^{l-1}du
$$
for $l=1-\frac{p}{q}>0$, the fact that $\|g^{*}\|_{l}\sim\|g\|_{l}$ for all $l\geq 1$ and Theorem \ref{MIT}.
Therefore, $W_{p,V}^{1}\subset W_{p,r,q,V}^{1}$ with $\|f\|_{ \frac{q(p-r)}{p(q-r)},p}\leq C\|f\|_{W_{p,V}^{1}}$.
\end{proof}
Let $A_{V}=\left\lbrace q\in ]1,\infty]: V\in RH_{qloc}\right\rbrace$ and $q_{0}=\sup\limits A_{V}$,  $B_{M}=\left\lbrace s\in[1,q_{0}[: (P_{sloc}) \textrm{ holds }\right\rbrace$ and $s_{0}=\inf B_{M}$.
\begin{cor}\label{CI} For all $p,\,p_{1},\,p_{2}$ such that $ 1\leq  p_{1} <p<p_{2}<q_{0} $ with $p>s_0$, $W_{p,V}^{1}$ is a real interpolation space between $W_{p_{1},V}^{1}$ and $W_{p_{2},V}^{1}$.
\end{cor}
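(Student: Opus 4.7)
The strategy is reiteration. I would first pick auxiliary exponents $s\in B_M$ and $q\in A_V$ satisfying $p_1\leq s<p$ and $p_2<q<q_0$. Such an $s$ exists because $p>s_0$ and, by Theorem~\ref{kz}, every $s'\in(s_0,q_0)$ lies in $B_M$: if $p_1>s_0$ take $s=p_1$, otherwise pick any $s\in(s_0,p)$, in which case $s>s_0\geq p_1$ automatically. Such a $q$ exists because $A_V$ is an interval open on the right (by the self-improvement of reverse H\"older classes, Proposition~\ref{CRO}) and $p_2<q_0=\sup A_V$. With these choices one has $1\leq p_1\leq s<p<p_2<q$, so the hypotheses of Theorem~\ref{IS} are met simultaneously for the triples $(p_1,s,p,q)$ and $(p_1,s,p_2,q)$.

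Applying Theorem~\ref{IS} twice then represents $W_{p,V}^{1}$ and $W_{p_2,V}^{1}$ as interpolation spaces between the \emph{same} pair $(W_{p_1,V}^{1},W_{q,V}^{1})$:
\[
W_{p,V}^{1}=(W_{p_1,V}^{1},W_{q,V}^{1})_{\theta_1,p},\qquad W_{p_2,V}^{1}=(W_{p_1,V}^{1},W_{q,V}^{1})_{\theta_2,p_2},
\]
with $\theta_1=\frac{q(p-p_1)}{p(q-p_1)}$ and $\theta_2=\frac{q(p_2-p_1)}{p_2(q-p_1)}$. A direct computation gives
\[
\eta:=\theta_1/\theta_2=\frac{p_2(p-p_1)}{p(p_2-p_1)}\in(0,1),
\]
the upper bound coming from $p<p_2$ and the lower from $p_1<p$.

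Finally I would invoke the reiteration theorem for the real $K$-method (Bergh-L\"ofstr\"om \cite{bergh}, Theorem~3.5.3) in the form
\[
\bigl(A_0,(A_0,A_1)_{\theta,q_1}\bigr)_{\eta,q_2}=(A_0,A_1)_{\eta\theta,q_2},\qquad 0<\theta,\eta<1,
\]
applied with $A_0=W_{p_1,V}^{1}$, $A_1=W_{q,V}^{1}$, $(\theta,q_1)=(\theta_2,p_2)$ and $(\eta,q_2)=(\theta_1/\theta_2,p)$. This yields $(W_{p_1,V}^{1},W_{p_2,V}^{1})_{\eta,p}=(W_{p_1,V}^{1},W_{q,V}^{1})_{\theta_1,p}=W_{p,V}^{1}$, which is the desired conclusion.

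The main obstacle is administrative rather than analytic: one must verify that a \emph{single} auxiliary pair $(s,q)$ serves Theorem~\ref{IS} for both target exponents $p$ and $p_2$, so that the two interpolation representations share an ambient couple and reiteration is applicable. This is where the hypothesis $p>s_0$ (rather than a weaker hypothesis on $p_1$) and the strict inequality $p_2<q_0$ are used, together with the openness of $B_M$ and $A_V$ at their defining endpoints. The substantive work --- the Calder\'on-Zygmund decomposition, the Fefferman-Phong inequality, and the $K$-functional estimates of Theorem~\ref{EK} --- has already been absorbed into Theorem~\ref{IS} itself, so nothing further is needed beyond this interpolation-theoretic bookkeeping.
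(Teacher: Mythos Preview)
Your argument is correct, but it is more elaborate than necessary. The paper's own proof is a one-liner: since $p_{2}<q_{0}$, item~1 of Proposition~\ref{CRO} gives $V\in RH_{p_{2}loc}$, and then Theorem~\ref{IS} applies directly with $r=p_{1}$, $q=p_{2}$ (and the same choice of $s\in B_{M}$ with $p_{1}\leq s<p$ that you describe). No auxiliary $q>p_{2}$ and no reiteration are needed, because $p_{2}$ itself already serves as the upper endpoint in Theorem~\ref{IS}.

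The paper does note parenthetically that one ``could prove this corollary also using the reiteration theorem'', which is exactly the route you take. So your approach is the acknowledged alternative; it is fully valid and your bookkeeping (the computation $\eta=\theta_{1}/\theta_{2}=\frac{p_{2}(p-p_{1})}{p(p_{2}-p_{1})}$ and the appeal to Bergh--L\"ofstr\"om) is correct. The direct approach simply buys brevity, while yours has the mild advantage of illustrating how the reiteration machinery recovers the full range of intermediate exponents once two endpoint identifications are in hand.
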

\begin{proof}
Since $p_{2}<q_{0}$, item 1. of Proposition \ref{CRO} gives us that $V\in RH_{p_{2}loc}$. Therefore, Theorem \ref{IS} yields the corollary. (We could prove this corollary also using the reiteration theorem.)
 \end{proof}
 \section{Interpolation of homogeneous Sobolev spaces} 
 Denote for $1\leq r<\infty$, $\dot{T}_{r}f= |\nabla f|^{r}+|Vf|^{r}$, $\dot{T}_{r*}f= |\nabla f|^{r*}+|Vf|^{r*}$ and $\dot{T}_{r**}f=|\nabla f|^{r**}+|Vf|^{r**}$. For the estimation of the functional $K$ for homogeneous Sobolev spaces we have the corresponding results:
\begin{thm}\label{EKH} Under the hypotheses of Theorem \ref{IHS} with $q<\infty$:
\begin{itemize}
\item[1.] there exists $C_{1}$ such that for every $f \in\dot{W}_{r,V}^{1}+\dot{W}_{q,V}^{1}$ and $t>0$
$$
 K(f,t,\dot{W}_{r,V}^{1},\dot{W}_{q,V}^{1})\geq C_{1}\left\lbrace\left(\int_{0}^{t^{\frac{qr}{q-r}}}\dot{T}_{r*}f(u)du \right)^{\frac{1}{r}}+t \left(\int_{t^{\frac{qr}{q-r}}}^{\infty}\dot{T}_{q*}f(u)du\right)^{\frac{1}{q}}\right\rbrace;
$$
\item[2.] for $s\leq p<q$, there exists $C_{2}$ such that for every $f\in \dot{W}_{p,V}^{1}$ and $t>0$
$$ 
K(f,t,\dot{W}_{r,V}^{1},\dot{W}_{q,V}^{1})\leq C_{2}\left\lbrace \left(\int_{0}^{t^{\frac{qr}{q-r}}}\dot{T}_{s*}f(u)du \right)^{\frac{1}{s}}+t\left(\int_{t^{\frac{qr}{q-r}}}^{\infty}\left(\mathcal{M}\dot{T}_{s}f\right)^{*\frac{q}{s}}(u) du\right)^{\frac{1}{q}}\right\rbrace.
$$
\end{itemize}
\end{thm}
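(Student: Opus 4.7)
The plan is to mirror the proof of Theorem \ref{EK}, simply deleting the $|f|^r$ contribution from $T_r$ wherever it appears. For the lower bound in item 1, I first observe that the maps $f \mapsto |\nabla f|$ and $f \mapsto Vf$ realize $\dot{W}^1_{l,V}$ as a subspace of $L_l \times L_l$ for each $l \in \{r,q\}$, so
\begin{equation*}
K(f,t,\dot{W}^1_{r,V},\dot{W}^1_{q,V}) \;\geq\; K(|\nabla f|,t,L_r,L_q) + K(Vf,t,L_r,L_q).
\end{equation*}
Applying Holmstedt's theorem (Theorem \ref{H}) with $p_0 = r$, $p_1 = q$ (so $\alpha = qr/(q-r)$) to each summand and adding the two pieces gives the stated lower bound, using the definitions $\dot{T}_{r*}f = |\nabla f|^{r*} + |Vf|^{r*}$ and $\dot{T}_{q*}f = |\nabla f|^{q*} + |Vf|^{q*}$.

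For the upper bound in item 2, I will apply the Calder\'{o}n-Zygmund decomposition of Proposition \ref{CZ} at height $\alpha(t) = (\mathcal{M}\dot{T}_s f)^{*1/s}(t^{qr/(q-r)})$, with $\dot{T}_s$ in place of $T_s$ in the definition of the bad set $\Omega_t$. This yields $f = g + \sum_i b_i$ over a Whitney covering $\Omega_t = \cup_i B_i$. The bounded overlap of the $B_i$ together with the homogeneous analogues of property \eqref{3} yield $\| |\nabla b| \|_r + \|Vb\|_r \leq C\alpha(t)\mu(\Omega_t)^{1/r}$, while the type-1/type-2 estimates on $\Omega_t$ give $\int_{\Omega_t} \dot{T}_q g\, d\mu \leq C\alpha^q(t)\mu(\Omega_t)$ exactly as in Proposition \ref{CZ}. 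On the good set $F_t = M \setminus \Omega_t$ we have $g = f$, and I would control $\int_{F_t} \dot{T}_q f\, d\mu$ by the device in \eqref{K+}: pick a measurable $E_t$ with $\Omega_t \subset E_t \subset \{\mathcal{M}\dot{T}_s f \geq (\mathcal{M}\dot{T}_s f)^*(t^{qr/(q-r)})\}$ and $\mu(E_t) = t^{qr/(q-r)}$, split into $E_t^c$ and $G_t = E_t \setminus \Omega_t$, and bound each piece by $(\mathcal{M}\dot{T}_s f)^*$ via Lemma 3.17 of Chapter V in \cite{stein3}. Inserting the estimates in $K(f,t,\dot{W}^1_{r,V},\dot{W}^1_{q,V}) \leq \|b\|_{\dot{W}^1_{r,V}} + t\|g\|_{\dot{W}^1_{q,V}}$ and using $\mu(\Omega_t) \leq t^{qr/(q-r)}$ delivers the claimed bound.

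The principal obstacle is checking that the Calder\'{o}n-Zygmund decomposition of Proposition \ref{CZ}, originally stated for $f \in W^1_{p,V}$, still produces bad functions $b_i$ with the homogeneous estimates when the term $|f|^s$ is removed from $T_s$. The key point is that on type 2 balls the control $\int_{B_i}|Vb_i|^s d\mu \leq C\alpha^s \mu(B_i)$ uses Fefferman-Phong (Lemma \ref{FP}) to bound $V^s_{B_i}(|f|^s)_{B_i}$ directly by $(|\nabla f|^s + |Vf|^s)_{B_i}$, which is homogeneous; likewise on type 1 balls one uses $V^s_{B_i} \geq r_i^{-s}$ together with Fefferman-Phong to control $r_i^{-s}(|f|^s)_{B_i} \leq C(\dot{T}_s f)_{B_i}$. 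A secondary issue is verifying that $g$ is well-defined a.e.\ and that $g \in \dot{W}^1_{q,V}$; the local integrability is provided by the parenthetical remark at the end of the proof of Proposition \ref{CZ}, where the bound $\int |b_i|/r_i\, d\mu \leq C\alpha\mu(B_i)$ was noted precisely for this homogeneous purpose. The remaining work is routine bookkeeping.
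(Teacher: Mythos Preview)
Your proposal is correct and follows essentially the same route as the paper: the lower bound via the bounded map $(\nabla,V):\dot{W}^1_{l,V}\to L_l\times L_l$ and Holmstedt's theorem is exactly the paper's argument, and for the upper bound the paper likewise applies a homogeneous Calder\'on--Zygmund decomposition (stated separately as Proposition~\ref{CZH1}, with $\Omega=\{\mathcal{M}\dot{T}_sf>\alpha^s\}$) at the same height $\alpha(t)$ and then repeats the $E_t$ trick verbatim. Your identification of the ``principal obstacle'' and its resolution---that Fefferman--Phong handles both type~1 and type~2 balls without any $|f|^s$ term, and that the parenthetical local-integrability remark in Proposition~\ref{CZ} was inserted precisely for this homogeneous situation---matches the paper's own commentary that in the non-homogeneous proof the hypothesis $f\in L_p$ was used only for the $L_r$ control of $b$ and the $\int_\Omega|g|^q$ bound, neither of which is needed here.
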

\begin{thm}\label{EKHI}Under the hypotheses of Theorem \ref{IHS} with $V\in RH_{\infty}$:
\begin{itemize}
\item[1.] there exists $C_{1}$ such that for every $f \in\dot{W_{r,V}^{1}}+\dot{W}_{\infty,V}^{1}$ and $t>0$
$$ K(f,t^\frac{1}{r},\dot{W}_{r,V}^{1},\dot{W}_{\infty,V}^{1})\geq C_{1}t^{\frac{1}{r}}(\dot{T}_{r**}f)^{\frac{1}{r}}(t);
$$
\item[2.] for $ s\leq p<\infty$, there exists $C_{2}$ such that for every $f\in \dot{W}_{p,V}^{1}$  and every $t>0$
$$ 
K(f,t^{\frac{1}{r}},\dot{W}_{r,V}^{1},\dot{W}_{\infty,V}^{1})\leq C_{2} t^{\frac{1}{r}}(\dot{T}_{s**}f)^{\frac{1}{s}}(t).
$$
\end{itemize}
\end{thm}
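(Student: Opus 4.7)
The plan is to mirror the argument of Theorem~\ref{EKI} (the non-homogeneous $RH_\infty$ case), with every occurrence of $|f|$ dropped from $T_r f$ since here $\dot{T}_r f = |\nabla f|^r + |Vf|^r$. The key inputs are Holmstedt's theorem for the lower bound and the Calder\'on--Zygmund decomposition of Proposition~\ref{CZI}, suitably adapted to the homogeneous setting, for the upper bound.

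For item~1, I would start from an arbitrary decomposition $f = f_1 + f_2$ with $f_1 \in \dot{W}_{r,V}^{1}$ and $f_2 \in \dot{W}_{\infty,V}^{1}$. Since the two linear maps $h \mapsto |\nabla h|$ and $h \mapsto Vh$ are bounded from $\dot{W}_{l,V}^{1}$ to $L_l$ for $l = r, \infty$, one obtains $K(|\nabla f|, t^{1/r}, L_r, L_\infty) \leq K(f, t^{1/r}, \dot{W}_{r,V}^{1}, \dot{W}_{\infty,V}^{1})$ together with the analogous inequality for $Vf$. The $L_r$--$L_\infty$ version of Holmstedt's theorem gives $K(h, t^{1/r}, L_r, L_\infty) \sim (t\, h^{r**}(t))^{1/r}$; summing over $h \in \{|\nabla f|, Vf\}$ and using $\dot{T}_{r**}f = |\nabla f|^{r**} + |Vf|^{r**}$ yields the lower bound.

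For item~2, given $f \in \dot{W}_{p,V}^{1}$ with $s \leq p < \infty$ and $t > 0$, I would set $\alpha = \alpha(t) := (\mathcal{M}\dot{T}_s f)^{*\,1/s}(t)$ and $\Omega_t := \{\mathcal{M}\dot{T}_s f > \alpha^s\}$, noting that $\mu(\Omega_t) \leq t$. Applying the homogeneous analogue of Proposition~\ref{CZI}, one decomposes $f = g + \sum_i b_i$ with $\|g\|_{\dot{W}_{\infty,V}^{1}} \leq C\alpha$, $\int_{B_i} \dot{T}_r b_i \, d\mu \leq C\alpha^r \mu(B_i)$, $\sum_i \ind_{B_i} \leq N$, and $\sum_i \mu(B_i) \leq N\mu(\Omega_t) \leq Nt$ (the $B_i$ coming from a Whitney decomposition of $\Omega_t$). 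The bounded overlap property then gives $\|b\|_{\dot{W}_{r,V}^{1}}^r \leq N\sum_i \int_{B_i} \dot{T}_r b_i \, d\mu \leq C\alpha^r t$, whence $K(f, t^{1/r}, \dot{W}_{r,V}^{1}, \dot{W}_{\infty,V}^{1}) \leq \|b\|_{\dot{W}_{r,V}^{1}} + t^{1/r}\|g\|_{\dot{W}_{\infty,V}^{1}} \leq Ct^{1/r}\alpha(t)$. The desired upper bound follows from $\alpha(t) \leq C(\dot{T}_{s**}f)^{1/s}(t)$, which is a consequence of $(\mathcal{M}h)^* \sim h^{**}$ and the subadditivity $(h_1+h_2)^{**} \leq h_1^{**} + h_2^{**}$.

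The main obstacle is verifying that Proposition~\ref{CZI} transfers to the homogeneous setting. Examining its proof, the pointwise bounds on $|\nabla g|$ and $|Vg|$ were established using only local integrability of $|\nabla f|$ and $Vf$, the Poincar\'e inequality $(P_s)$, the $RH_\infty$ condition, and the Fefferman--Phong inequality (Lemma~\ref{FP}); the $L_\infty$ bound on $g$ itself, which used $|f|_{B_i} \leq \alpha$ in the non-homogeneous argument, is not needed here since $\dot{W}_{\infty,V}^{1}$ does not control $\|g\|_\infty$. Similarly, the estimates on $\int_{B_i}\dot{T}_r b_i\,d\mu$ correspond to exactly those parts of the estimates on $b_i$ in Proposition~\ref{CZ} that do not involve $|f|_{B_i}$. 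In the particular case $r = s$, the argument produces the upper bound for every $f \in \dot{W}_{r,V}^{1} + \dot{W}_{\infty,V}^{1}$, giving a true characterization of $K$ exactly as in Theorem~\ref{EKI}.
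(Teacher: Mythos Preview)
Your proposal is correct and follows essentially the same approach as the paper: the paper defers the details to \cite{badr1} but explicitly records the homogeneous Calder\'on--Zygmund decomposition you need as Proposition~\ref{CZH2} (with $\Omega=\{\mathcal{M}\dot{T}_s f>\alpha^s\}$), and remarks, just as you do, that the only places where $f\in L_p$ was used in the non-homogeneous argument were to control $\|b\|_r$ and $\|g\|_\infty$, neither of which is needed in the homogeneous setting. Your lower bound via the boundedness of $(\nabla,V)$ and the $L_r$--$L_\infty$ $K$-functional, and your upper bound via $\alpha(t)=(\mathcal{M}\dot{T}_s f)^{*1/s}(t)$ with $\mu(\Omega_t)\leq t$, mirror exactly the scheme of Theorem~\ref{EKI}.
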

 Before we prove Theorems \ref{EKH}, \ref{EKHI} and \ref{IHS}, we give two versions of a Calder\'{o}n-Zygmund decomposition.
 \begin{prop}\label{CZH1} Let $M$ be a complete non-compact Riemannian manifold satisfying $(D)$. Let $1\leq q <\infty$ and $V\in RH_{q}$. Assume that $M$
admits a  Poincar\'{e} inequality $(P_{s})$ for some $1\leq s<q$. Let $ s \leq p < q$ and consider $ f \in \dot{W}_{p,V}^{1}$ and $\alpha
>0$.
Then there exist a collection of balls $(B_{i})_{i}$, 
functions $b_{i}\in \dot{W}_{r,V}^{1}$ for $1\leq r\leq s$ and a function $g\in \dot{W}_{q,V}^{1}$ 
such that the following properties hold:
\begin{equation}
f = g+\sum_{i}b_{i} \label{dfn}
\end{equation}
\begin{equation}
\int_{\cup_{i}B_{i}}\dot{T}_{q}g\,d\mu\leq C\,\alpha^{q}\mu(\cup_{i}B_{i}) \label{egn}
\end{equation}
\begin{equation}
\\supp\,\, b_{i}\subset B_{i} \,\textrm{ and } \;\forall 1\leq r\leq s
 \int_{B_{i}}\dot{T}_{r}b_{i}\,d\mu
 \leq C\alpha^{r}\mu(B_{i})\label{ebn}
\end{equation}
\begin{equation}
\sum_{i}\mu(B_{i})\leq C\alpha^{-p}\int \dot{T}_{p}f\,d\mu
\label{eBn}
\end{equation}
\begin{equation}
\sum_{i}\chi_{B_{i}}\leq N \label{rbn}
\end{equation}
with $C$ and $N$ depending only on $q$, $s$ and the constants in
$(D)$, $(P_{s})$ and the $RH_{q}$ condition.
\end{prop}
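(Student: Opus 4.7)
The plan is to mimic, line-by-line, the proof of Proposition \ref{CZ}, exploiting the fact that the hypotheses here are exactly those of the global case of Proposition \ref{CZ} and that $\dot T_r$ differs from $T_r$ only by the missing $|f|^r$ term. Concretely, I would set $\Omega=\{x\in M:\mathcal M\dot T_s f(x)>\alpha^s\}$; by the maximal theorem (Theorem \ref{MIT}) applied at exponent $p/s\geq 1$ one has $\mu(\Omega)\leq C\alpha^{-p}\int_M\dot T_p f\,d\mu<\infty$, and $\Omega\neq M$ since $M$ is noncompact and doubling. I would then take a Whitney covering $(\underline{B_i})$ of $\Omega$, dilate to $B_i=C_1\underline{B_i}$, split the balls into type 1 ($V_{B_i}^s\geq r_i^{-s}$) and type 2 ($V_{B_i}^s<r_i^{-s}$) using Fefferman--Phong (Lemma \ref{FP}) for the weight $V^s\in A_\infty$ (noting $V^s\in RH_{q/s}$ by Proposition \ref{CR}), fix a Lipschitz partition of unity $(\chi_i)$ subordinate to $(\underline{B_i})$ exactly as in Proposition \ref{CZ}, and set $b_i=f\chi_i$ on type 1, $b_i=(f-f_{B_i})\chi_i$ on type 2, and $g:=f-\sum_i b_i$.

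The verifications of (\ref{eBn}) and (\ref{rbn}) are then immediate copies of Proposition \ref{CZ}. For (\ref{ebn}) at $r=s$, the proof is actually lighter than there since I no longer need to bound $\int_{B_i}|b_i|^s\,d\mu$: for type 2 balls, Poincar\'e $(P_s)$ together with $\overline{B_i}\cap F\neq\emptyset$ controls $\int_{B_i}|\nabla b_i|^s\,d\mu$, and Jensen, doubling and the type 2 Fefferman--Phong estimate $V_{B_i}^s(|f|^s)_{B_i}\leq C(|\nabla f|^s+|Vf|^s)_{B_i}\leq C\alpha^s$ control $\int_{B_i}|Vb_i|^s\,d\mu$; for type 1 balls, $\int_{B_i}\dot T_s b_i\,d\mu\leq\int_{B_i}\dot T_s f\,d\mu+r_i^{-s}\int_{B_i}|f|^s\,d\mu\leq C\int_{B_i}\dot T_s f\,d\mu\leq C\alpha^s\mu(B_i)$ since on type 1 balls $\min(V_{B_i}^s,r_i^{-s})=r_i^{-s}$ in Lemma \ref{FP}. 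The extension from $r=s$ to $1\leq r\leq s$ is H\"older plus doubling. For (\ref{egn}), note that $\cup_iB_i=\Omega$ and on $\Omega$ one has $g=\sum{}^2 f_{B_i}\chi_i$ and $\nabla g=\sum{}^2 f_{B_i}\nabla\chi_i$. The pointwise bound $|\nabla g|\leq C\alpha$ on $\Omega$ follows from the same cancellation trick used in Proposition \ref{CZ}: since $\sum\nabla\chi_i=0$ on $\Omega$, at any $x\in B_j$ one writes $\nabla g(x)=\sum_{i\in I_x}(f_{B_i}-f_{B_j})\nabla\chi_i(x)$ and bounds each difference by $Cr_j\alpha$ using $(P_s)$ on $7B_j$ and $\overline{B_j}\cap F\neq\emptyset$. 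The bound on $\int_\Omega V^q|g|^q\,d\mu\leq C\alpha^q\mu(\Omega)$ then uses $V^s\in RH_{q/s}$ and the type 2 Fefferman--Phong estimate as in Proposition \ref{CZ}.

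The one genuinely new point, and the main (mild) obstacle in the homogeneous setting, is that $g$ must be a well-defined distribution, i.e.\ $g\in L_{1,loc}$, which in Proposition \ref{CZ} was automatic since $f\in L_p$. Here I would invoke the remark already flagged parenthetically there: for $\varphi\in L_\infty$ with compact support,
\[
\int\sum_i|b_i|\,|\varphi|\,d\mu\leq\Bigl(\int\sum_i\frac{|b_i|}{r_i}\,d\mu\Bigr)\sup_{x\in M}\bigl(d(x,F)|\varphi(x)|\bigr)\leq C\alpha\,\mu(\Omega)\sup_{x\in M}\bigl(d(x,F)|\varphi(x)|\bigr),
\]
by H\"older, $(P_s)$ and $\overline{B_i}\cap F\neq\emptyset$. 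Combined with $f\in L_{p,loc}\subset L_{1,loc}$ (elements of $\dot W_{p,V}^1$ are locally integrable), this yields $g\in L_{1,loc}$. To conclude $g\in\dot W_{q,V}^1$ it then suffices to combine the on-$\Omega$ estimate with the observation that on $F$, $g=f$ and Lebesgue differentiation together with the definition of $\Omega$ give $\dot T_s f\leq\alpha^s$ $\mu$-a.e.\ on $F$, whence $\int_F\dot T_q f\,d\mu\leq\alpha^{q-p}\|f\|_{\dot W_{p,V}^1}^p<\infty$, as in Proposition \ref{CZ}.
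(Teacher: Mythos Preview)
Your proposal is correct and follows exactly the paper's own approach: the paper merely says that the proof ``goes as in the case of non-homogeneous Sobolev spaces, but taking $\Omega=\{x\in M:\mathcal{M}\dot{T}_{s} f(x)>\alpha^{s}\}$,'' and you have faithfully unpacked this, including the $L_{1,loc}$ argument for $g$ that the paper explicitly flagged (in the parenthetical remark during the proof of Proposition~\ref{CZ}) as needed precisely here. The only cosmetic slip is that on $\Omega$ one has $\nabla g=h_2=\sum{}^2 f_{B_i}\nabla\chi_i$, so the cancellation identity $\sum_{i\in I_x}(f_{B_i}-f_{B_j})\nabla\chi_i(x)$ equals $h=h_1+h_2$ rather than $\nabla g$ itself; but since you invoke ``the same cancellation trick used in Proposition~\ref{CZ}'' (which bounds $h$ and $h_1$ separately and subtracts), this is harmless.
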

 \begin{prop}\label{CZH2} Let $M$ be a complete non-compact Riemannian manifold satisfying $(D)$. Consider $V\in RH_{\infty}$. Assume that $M$
admits a  Poincar\'{e} inequality $(P_{s})$  for some $1\leq s<\infty$. 
Let $ s \leq p < \infty$, $ f \in \dot{W}_{p,V}^{1}$ and $\alpha
>0$. Then there exist a collection of balls $(B_{i})_{i}$, 
functions $b_{i}$ and a function $g$ 
such that the following properties hold :
\begin{equation}
f = g+\sum_{i}b_{i} \label{dfn}
\end{equation}
\begin{equation}
\dot{T}_{1}g\leq C\alpha
\quad \mu-a.e. \label{egn}
\end{equation}
\begin{equation}
\\supp\,\, b_{i}\subset B_{i} \,\textrm{ and }\;\forall 1\leq r\leq s\,
 \int_{B_{i}}\dot{T}_{r}b_{i}
d\mu \leq C\alpha^{r}\mu(B_{i})\label{ebn}
\end{equation}
\begin{equation}
\sum_{i}\mu(B_{i})\leq C\alpha^{-p}\int \dot{T}_{p} f\,d\mu
\label{eBn}
\end{equation}
\begin{equation}
\sum_{i}\chi_{B_{i}}\leq N \label{rbn}
\end{equation}
with $C$ and $N$ depending only on $q$, $p$ and the constant in
$(D)$, $(P_{s})$ and the $RH_{\infty}$ condition.
\end{prop}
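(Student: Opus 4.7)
The plan is to merge the arguments of Propositions \ref{CZ} and \ref{CZI}, restricted to the homogeneous functional $\dot{T}_s f = |\nabla f|^s + |Vf|^s$, while keeping track of the integrability issues flagged at the end of the proof of Proposition \ref{CZ}.

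\textbf{Setup of the level set.} Given $f \in \dot{W}_{p,V}^{1}$ and $\alpha>0$, I set
$$\Omega = \{x \in M : \mathcal{M}\dot{T}_s f(x) > \alpha^s\}.$$
Since $\dot{T}_s f \in L_{p/s}$ with $p/s \ge 1$, the maximal theorem gives $\mathcal{M}\dot{T}_s f \in L_{p/s,\infty}$, so
$$\mu(\Omega) \le C\alpha^{-p}\int \dot{T}_p f\,d\mu < \infty.$$
Because $\mu(M)=\infty$, we have $\Omega \neq M$. If $\Omega = \emptyset$, take $g=f$ and all $b_i=0$; (\ref{egn}) follows from the Lebesgue differentiation theorem applied to $\dot{T}_s f$. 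Otherwise I pick a Whitney covering $(B_i)$ of $\Omega$ exactly as in Proposition \ref{CZ}, together with the subordinate Lipschitz partition of unity $(\chi_i)$.

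\textbf{Definition of $b_i$ and $g$.} Using the Fefferman-Phong inequality (Lemma \ref{FP}) applied to $w = V^s$ (which lies in $A_\infty$ since $V \in RH_\infty$), I split the Whitney balls into type~1 ($V^s_{B_i} \ge r_i^{-s}$) and type~2 ($V^s_{B_i}< r_i^{-s}$), and define
$$b_i = \begin{cases} f\chi_i & \text{if } B_i \text{ of type 1,}\\ (f - f_{B_i})\chi_i & \text{if } B_i \text{ of type 2,}\end{cases}\qquad g = f - \sum_i b_i.$$
As in Proposition \ref{CZ}, one checks that the sum defining $g$ converges locally in the distributional sense via the $L_\infty$-compactly supported pairing argument at the end of that proof (this is precisely where the remark ``for the homogeneous case we need this observation'' is used, since here $f \notin L_1$ a priori). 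The verification of (\ref{ebn}) for $r = s$ copies the type~1 / type~2 computation of Proposition \ref{CZ} verbatim, omitting only the $|b_i|^s$ term; the case $1 \le r<s$ then follows by Hölder's inequality with exponent $s/r$ and the size of $B_i$. Conclusions (\ref{eBn}) and (\ref{rbn}) are immediate from $\mu(\Omega)$ and bounded overlap.

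\textbf{The pointwise bound $\dot{T}_1 g \le C\alpha$.} This is the only genuinely new piece and the key place where $V\in RH_\infty$ is used. On $F := \Omega^c$ we have $g = f$, and Lebesgue differentiation applied to $\dot{T}_s f$ gives $|\nabla f| + |Vf| \le C(\mathcal{M}\dot{T}_s f)^{1/s} \le C\alpha$ a.e.\ on $F$. On $\Omega$ one computes, exactly as in Proposition \ref{CZ},
$$\nabla g = (\nabla f)\mathbf{1}_F + \sum\nolimits^2 f_{B_i}\nabla\chi_i,$$
and the bound $|\nabla g|\le C\alpha$ follows from the same telescoping argument $|h(x)| \le C\sum_{i\in I_x}|f_{B_i}-f_{B_j}|r_i^{-1}$ combined with $(P_s)$ applied on $7B_j$ (for type~2 terms) and, for type~1 terms, the bound $|f_{B_i}| \le Cr_i\alpha$ coming from Lemma \ref{FP}. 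For the new $Vg$ estimate, I use that on $\Omega$, $g = \sum^2 f_{B_i}\chi_i$, so for a.e.\ $x\in\Omega$
$$|Vg(x)| \le \sum\nolimits^2_{i\in I_x}V(x)|f_{B_i}|\le C\sum\nolimits^2_{i\in I_x}V_{B_i}|f_{B_i}| \le C\sum\nolimits^2_{i\in I_x}\bigl((V^s)_{B_i}(|f|^s)_{B_i}\bigr)^{1/s},$$
where the first inequality uses the $RH_\infty$ condition $V(x)\le CV_{B_i}$ on $B_i$, and the last uses Jensen together with $(V_{B_i})^s \sim (V^s)_{B_i}$. For type~2 balls, Lemma \ref{FP} yields $(V^s)_{B_i}(|f|^s)_{B_i} \le C(\dot{T}_s f)_{\overline{B_i}} \le C\alpha^s$ since $\overline{B_i}\cap F\neq\emptyset$. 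The bounded overlap property then gives $|Vg|\le NC\alpha$ on $\Omega$.

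\textbf{Main obstacle.} The calculations are essentially forced by the templates of Propositions \ref{CZ} and \ref{CZI}; the only subtle point is the local integrability of $g$ in the absence of $f \in L_1$, which is handled by the duality-with-$L_\infty$-compact-support argument referenced above, and the use of $RH_\infty$ (rather than $RH_q$) to upgrade the mean bound on $V_{B_i}|f_{B_i}|$ to a pointwise bound on $Vg$.
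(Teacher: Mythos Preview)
Your proposal is correct and follows exactly the approach the paper indicates: the paper's own proof is the one-paragraph remark that the argument ``goes as in the case of non-homogeneous Sobolev spaces, but taking $\Omega=\{x\in M:\mathcal{M}\dot{T}_{s} f(x)>\alpha^{s}\}$'' and noting that $f\in L_p$ was only used for the $|b_i|$ and $|g|$ terms that are now absent from $\dot{T}_r$. You have faithfully unpacked this sketch, including the two points the paper flags---the duality argument for $g\in L_{1,loc}$ and the $RH_\infty$ pointwise bound on $Vg$ borrowed from Proposition~\ref{CZI}.
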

The proof of these two decompositions goes as in the case of non-homogeneous Sobolev spaces, but taking 
$\Omega=\left\lbrace x\in M:\mathcal{M}\dot{T}_{s} f(x)>\alpha^{s}\right\rbrace$ as $\|f\|_{p}$ is not under control. We note that in the non-homogeneous case, we used that $f\in L_{p}$ only to control  $b\in L_{r}$ and $g\in L_{\infty}$ when $V\in RH_{\infty}$ and $\int_{\Omega}|g|^{q}d\mu$ when $V\in RH_{q}$ and $q<\infty$.
\begin{proof}[Proof of Theorem \ref{EKH} and \ref{EKHI}]
We refer to \cite{badr1} for the proof of Theorem \ref{EKHI}. The proof of item 1. of Theorem \ref{EKH} is the same as in the non-homogeneous case. Let us turn to inequality 2. Consider $f\in \dot{W}_{p,V}^{1}$, $t>0$ and $\alpha(t)=(\mathcal {M}\dot{T}_{s}f)^{*\frac{1}{s}}(t^{\frac{qr}{q-r}})$. By the Calder\'{o}n-Zygmund decomposition with $\alpha=\alpha(t)$, $f$ can be written $f=b+g$ with $\|b\|_{\dot{W}_{r,V}^{1}}\leq C\alpha(t) \mu(\Omega)^{\frac{1}{r}}$ and $\int_{\Omega} \dot{T}_{q}g d\mu\leq C\alpha^{q}(t)\mu(\Omega)$. Since we have $\mu(\Omega)\leq t^{\frac{qr}{q-r}}$, we get then as in the non-homogeneous case
$$
K(f,t,\dot{W}_{r,V}^{1},\dot{W}_{q,V}^{1})\leq Ct^{\frac{q}{q-r}}(\dot{T}_{s**}f)^{\frac{1}{s}}(t^{\frac{qr}{q-r}})+Ct\left(\int_{t^{\frac{qr}{q-r}}}^{\infty}(\mathcal{M}\dot{T}_{s} f)^{*\frac{q}{s}}(u)du \right)^{\frac{1}{q}}.
$$
\end{proof} 
\begin{proof}[Proof of Theorem \ref{IHS}] We refer to \cite{badr1} when $q=\infty$. When $q<\infty$, the proof follows directly from Theorem \ref{EKH}. Indeed, item 1. of Theorem \ref{EKH} gives us that  $$
(\dot{W}_{r,V}^{1},\dot{W}_{q,V}^{1})_{\frac{q(p-r)}{p(q-r)},p} \subset \dot{W}_{p,V}^{1}
$$  
 with $\|f\|_{\dot{W}_{p,V}^{1}}\leq C\|f\|_{\frac{q(p-r)}{p(q-r)},p}$, while  item 2. gives us as in section 5 for non-homogeneous Sobolev spaces, that 
$$\dot{W}_{p,V}^{1}\subset (\dot{W}_{r,V}^{1},\dot{W}_{q,V}^{1})_{\frac{q(p-r)}{p(q-r)},p}
$$ 
with $\|f\|_{\frac{q(p-r)}{p(q-r)},p}\leq C\|f\|_{\dot{W}_{p,V}^{1}}$. 
 \end{proof}
 Let $A_{V}=\left\lbrace q\in ]1,\infty]: V\in RH_{q}\right\rbrace$ and $q_{0}=\sup\limits A_{V}$, $B_{M}=\left\lbrace s\in[1,q_{0}[: (P_{s}) \textrm{ holds }\right\rbrace$ and $s_{0}=\inf B_{M}$.
\begin{cor}\label{CIH} For all $p,\,p_{1},\,p_{2}$ such that $ 1\leq  p_{1} <p<p_{2}<q_{0} $ with $p>s_0$, $\dot{W}_{p,V}^{1}$ is a real interpolation space between $\dot{W}_{p_{1},V}^{1}$ and $\dot{W}_{p_{2},V}^{1}$.
\end{cor}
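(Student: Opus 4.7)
The plan is to deduce Corollary \ref{CIH} as a direct consequence of Theorem \ref{IHS}, following exactly the strategy used for the non-homogeneous Corollary \ref{CI}. The goal is to verify the hypotheses of Theorem \ref{IHS} for the parameters $r=p_{1}$ and $q=p_{2}$, together with some admissible Poincar\'e exponent $s$.

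First, the reverse H\"older hypothesis: since $p_{2}<q_{0}=\sup A_{V}$, the definition of the supremum produces some $\tilde q\in A_{V}$ with $p_{2}<\tilde q$, i.e.\ $V\in RH_{\tilde q}$. By item 1 of Proposition \ref{CRO}, $RH_{\tilde q}\subset RH_{p_{2}}$, and hence $V\in RH_{p_{2}}$. This supplies the reverse H\"older hypothesis of Theorem \ref{IHS} with $q=p_{2}$.

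Second, the Poincar\'e hypothesis: I need some $s$ with $p_{1}\leq s<p$ (so that $1\leq r=p_{1}\leq s<p<q=p_{2}$) for which $(P_{s})$ holds. Since $p>s_{0}=\inf B_{M}$, there exists $s\in B_{M}$ with $s<p$. If $p_{1}\leq s_{0}$, any such $s$ automatically satisfies $p_{1}\leq s_{0}\leq s$. If $p_{1}>s_{0}$, then Theorem \ref{kz}, together with the standard monotonicity that $(P_{s})$ implies $(P_{s'})$ for $s'\geq s$, forces $B_{M}$ to be an interval open in $[1,q_{0}[$, so $p_{1}\in B_{M}$ and one may take $s=p_{1}$. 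Either way the required $s$ exists.

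With both hypotheses of Theorem \ref{IHS} verified, the conclusion of that theorem with $r=p_{1}$, $q=p_{2}$ and this $s$ reads $\dot{W}_{p,V}^{1}=(\dot{W}_{p_{1},V}^{1},\dot{W}_{p_{2},V}^{1})_{\theta,p}$ with $\theta=\frac{p_{2}(p-p_{1})}{p(p_{2}-p_{1})}$, which is precisely the statement of Corollary \ref{CIH}. I do not expect a genuine obstacle; the only mild subtlety is extracting an admissible $s$ when $p_{1}>s_{0}$, which is handled by Theorem \ref{kz}. Alternatively, as the author remarks in the proof of Corollary \ref{CI}, one may invoke the reiteration theorem to deduce the result, but the direct application above is more transparent.
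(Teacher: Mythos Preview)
Your proposal is correct and follows exactly the approach the paper intends: the paper states Corollary~\ref{CIH} without proof, relying on the obvious analogy with Corollary~\ref{CI}, whose proof simply observes that $p_{2}<q_{0}$ forces $V\in RH_{p_{2}}$ and then invokes the main interpolation theorem. Your argument does just this in the homogeneous setting, with the additional care of explicitly producing the Poincar\'e exponent $s$; note, however, that the appeal to Theorem~\ref{kz} is unnecessary, since the monotonicity $(P_{s})\Rightarrow(P_{s'})$ for $s'\geq s$ already gives $p_{1}\in B_{M}$ whenever $p_{1}>s_{0}$.
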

 \section{Interpolation of Sobolev spaces on Lie Groups}
 Consider $G$ a connected Lie group. Assume that $G$ is unimodular and let $d\mu$ be a fixed Haar measure on $G$. Let $X_{1},...,X_{k}$ be a family of left invariant vector fields  such that the $X_{i}$'s satisfy a H\"{o}rmander condition. In this case the Carnot-Carath\'{e}odory metric $\rho$ is a true metric is a distance, and $G$ equipped with the distance  $\rho$ is complete and defines the same topology as  the topology of $G$ as manifold (see \cite{coulhon8} page 1148). 
 It is known that $G$ has an exponential growth or polynomial growth. In the first case, $G$ satisfies the local doubling property $(D_{loc})$ and admits a local Poincar\'{e} inequality $(P_{1loc})$. In the second case, it admits the global doubling property $(D)$ and a global Poincar\'{e} inequality $(P_{1})$ (see  \cite{coulhon8}, \cite{guivarch}, \cite{saloff2}, \cite{varopoulos2} for more details).

 \begin{dfn}[Sobolev spaces $W_{p,V}^{1}$]  For $1\leq p<\infty$ and for a weight $V\in A_{\infty}$, we define the Sobolev space $W_{p,V}^{1}$ as the completion of $C^{\infty}$ functions for the norm:
$$
  \| u \|_{W_{p,V}^{1}}=\|f \|_{p}+\|\,|Xf|\,\|_{p}+\|Vf\|_{p}
$$
where $|Xf|=\left(\sum_{i=1}^{k}|X_{i}f|^{2}\right)^{\frac{1}{2}}$.
\end{dfn}
\begin{dfn} We denote by $W_{\infty,V}^{1}$ the space of all bounded Lipschitz functions $f$ on $G$ such that $\|Vf\|_{\infty}<\infty$ which is a Banach space.
\end{dfn}
\begin{prop}  Let $V\in RH_{qloc}$ for some $1\leq q<\infty$. Consider, for $1\leq p<q$, 
$$H_{p,V}^{1}=\left\lbrace f\in L_{p}:\; |\nabla f|\, \textrm{ and } Vf \in L_{p} \right\rbrace
$$ and equip it with the same norm as $W_{p,V}^{1}$. 
Then as in Proposition \ref{DC} in the case of Riemannian manifolds, $C_{0}^{\infty}$ is dense in $H_{p,V}^{1}$ and hence $W_{p,V}^{1}=H_{p,V}^{1}$.
\end{prop}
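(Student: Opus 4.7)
The plan is to mirror the Riemannian proof of Proposition \ref{DC} by a three-stage approximation of a given $f \in H_{p,V}^{1}$: spatial truncation, bounded truncation in height, and smoothing by group convolution with an approximate identity. Each stage must be checked in the three parts of the $W_{p,V}^{1}$ norm, namely $\|\cdot\|_{p}$ applied to the function itself, to $|Xf|$, and to $Vf$.

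First I would reduce to compactly supported functions. Pick cutoffs $\chi_{n} \in C_{0}^{\infty}(G)$ with $\chi_{n} = 1$ on $B(e,n)$, $\supp\chi_{n}\subset B(e,2n)$, and $\|\,|X\chi_{n}|\,\|_{\infty} \leq C/n$, which is a standard construction on the H\"{o}rmander Lie group. By dominated convergence, $\|f(1-\chi_{n})\|_{p}$, $\|Vf(1-\chi_{n})\|_{p}$, and $\|\,|Xf|(1-\chi_{n})\,\|_{p}$ tend to $0$, while $\|\,f|X\chi_{n}|\,\|_{p} \leq (C/n)\|f\|_{p} \to 0$. Hence $f\chi_{n} \to f$ in $H_{p,V}^{1}$, and one may assume $f$ has compact support. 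Next, replace $f$ by its truncation $T_{k}(f)$, where $T_{k}(t) = \mathrm{sgn}(t)\min(|t|,k)$. The chain-rule inequalities $|X T_{k}(f)| \leq |Xf|\mathbf{1}_{\{|f|<k\}}$ and $|V T_{k}(f)| \leq |Vf|$ imply $T_{k}(f) \to f$ in $H_{p,V}^{1}$ as $k\to\infty$, again by dominated convergence. This reduces the problem to approximating a bounded, compactly supported $h \in H_{p,V}^{1}$ by $C_{0}^{\infty}$ functions.

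For the mollification stage, fix such an $h$ with $\supp h \subset K$, and set $h_{\epsilon} = h \ast \phi_{\epsilon}$, where $\phi_{\epsilon}$ is a smooth compactly supported approximate identity on $G$. Then $h_{\epsilon} \in C_{0}^{\infty}(G)$ with $\supp h_{\epsilon} \subset K'$ for some fixed compact $K'$ and all small $\epsilon$. Since the $X_{i}$ are left invariant, $X_{i}h_{\epsilon} = (X_{i}h)\ast\phi_{\epsilon}$, from which $h_{\epsilon}\to h$ and $X_{i}h_{\epsilon}\to X_{i}h$ in $L^{p}$ follow from standard mollifier theory. The decisive step is $Vh_{\epsilon} \to Vh$ in $L^{p}$. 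For this I would apply H\"{o}lder's inequality with conjugate exponents $q/p$ and $q/(q-p)$ on $K'$:
\[
\|V(h_{\epsilon} - h)\|_{p} \leq \|V\|_{L^{q}(K')}\,\|h_{\epsilon} - h\|_{L^{pq/(q-p)}(K')}.
\]
The first factor is finite because $V \in RH_{qloc}$ implies $V \in L^{q}_{loc}$. The second tends to $0$ because $h$ is bounded with compact support and therefore lies in every $L^{r}(K')$ for $r<\infty$, so $h_{\epsilon}\to h$ in each such $L^{r}$.

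The main obstacle, and the only place where the hypotheses $V \in RH_{qloc}$ and $p < q$ are used, is this last convergence. The bounded-truncation step is not cosmetic: without it, one can only guarantee $h \in L^{p}$, whereas the H\"{o}lder bound requires $h \in L^{pq/(q-p)}$, which is strictly higher integrability.
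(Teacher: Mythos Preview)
Your three-stage strategy---spatial cutoff, height truncation, mollification, with a H\"older estimate for the $Vf$ term---is exactly the scheme of the Appendix proof of Proposition~\ref{DC}, which is all the paper invokes here (no separate Lie-group argument is written out). The one real difference is the mollification step: you replace the paper's local-chart and partition-of-unity reduction to Euclidean convolution (step~3 of the Appendix) by a single group convolution $h_{\epsilon}=h\ast\phi_{\epsilon}$, which is the natural device on $G$ and spares you all of that bookkeeping. One technical slip to fix: for \emph{left}-invariant fields $X_{i}$ and the standard convolution $(f\ast g)(x)=\int f(y)\,g(y^{-1}x)\,d\mu(y)$ one has $X_{i}(f\ast g)=f\ast(X_{i}g)$, not $(X_{i}f)\ast g$; to make your identity $X_{i}h_{\epsilon}=(X_{i}h)\ast\phi_{\epsilon}$ hold you should convolve on the other side, i.e.\ set $h_{\epsilon}=\phi_{\epsilon}\ast h$. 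Your H\"older step is in fact slightly cleaner than the paper's: you use the given exponent $q>p$ directly, whereas the Appendix first invokes the self-improvement of reverse H\"older classes to manufacture some $r>p$, which is redundant here since $q>p$ is already part of the hypothesis.
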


\paragraph{\textbf{Interpolation of  $W_{p,V}^{1}$:}}
 Let $V \in RH_{qloc}$ for some $1<q\leq\infty$. To interpolate  the $W_{p_{i},V}^{1}$, we distinguish between the polynomial and the exponential growth cases. If $G$ has polynomial growth and $V\in RH_{q}$, then we are in the global case. Otherwise we are in the local case. In the two cases we obtain the following theorem:
\begin{thm}\label{G} Let $G$ be a connected Lie group as in the beginning of this section and assume that $V\in RH_{qloc}$ with $1<q\leq\infty$. Denote $T_{1}f=|f|+|X f|+|Vf|$, $T_{r*}f=|f|^{r*}+|X f|^{r*}+|Vf|^{r*}$ for $1\leq r<\infty$. 
\begin{itemize}
\item[a.] If $q<\infty$, then
\begin{itemize}
\item[1.] there exists $C_{1}>0$ such that for every $f\in W_{1,V}^{1}+W_{q,V}^{1}$ and $t>0$ 
$$
K(f,t, W_{1,V}^{1},W_{q,V}^{1})\geq C_{1}\left\lbrace\left(\int_{0}^{t^{\frac{q}{q-1}}}T_{1*}f(u)du \right)^{\frac{1}{s}}+t\left(\int_{t^{\frac{q}{q-1}}}^{\infty}T_{q*}f(u)du\right)^{\frac{1}{q}}\right\rbrace ;
$$
\item[2.] for $1\leq p<\infty$, there exists $C_{2}>0$ such that for every $f\in W_{p,V}^{1}$ and $t>0$,
$$ K(f,t, W_{1,V}^{1},W_{q,V}^{1}) \leq C_{2}\left\lbrace \int_{0}^{t^{\frac{q}{q-1}}}T_{1*}f(u)du +t\left(\int_{t^{\frac{q}{q-1}}}^{\infty} (\mathcal{M}T_{1}f)^{*q}(u)du\right)^{\frac{1}{q}}\right\rbrace.
$$
\end{itemize}
\item[b.] If $q=\infty$, then for every $f \in
 W_{1,V}^{1}+W_{\infty,V}^{1}$ and $t>0$ 
\begin{equation*}
 K(f,t,W_{1,V}^{1},W_{\infty,V}^{1})\sim \int_{0}^{t}
T_{1*}f(u)du.
\end{equation*}
\end{itemize}
\end{thm}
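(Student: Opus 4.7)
The plan is to transfer Theorems \ref{EKI} and \ref{EK} (proved for Riemannian manifolds) to the Lie group setting, with the specialization $s = r = 1$. The enabling fact is that a connected Lie group $G$ with H\"ormander vector fields $X_1, \dots, X_k$ always admits a Poincar\'e inequality with exponent $s=1$ -- globally $(P_1)$ if $G$ has polynomial growth, and locally $(P_{1loc})$ if it has exponential growth -- along with the corresponding doubling property. Thus with $r = s = 1 < q$, the hypotheses of the earlier theorems are met, and it remains to adapt the proofs with $|Xf|$ in place of $|\nabla f|$.

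For case (a) with $q < \infty$, the lower bound follows from Holmstedt's Theorem \ref{H} applied componentwise to the bounded embedding $(I, X, V) \colon W_{l,V}^{1} \to L_l(G; \mathbb{C} \times TG \times \mathbb{C})$, $1 \le l \le \infty$, exactly as in the Riemannian case. For the upper bound, I would apply the Lie group analogue of Proposition \ref{CZ} at level $\alpha(t) = (\mathcal{M} T_1 f)^{*}(t^{q/(q-1)})$, writing $f = g + \sum_i b_i$ on the bad set $\Omega_t = \{\mathcal{M} T_1 f > \alpha\}$, and repeat the estimates of Section 4: the global version if $G$ has polynomial growth with $V \in RH_q$, and otherwise the local partition-of-unity version, covering $G$ by unit balls $B^j$ and decomposing each $f_j = f\varphi_j$. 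For case (b) with $q = \infty$, I would apply the analogue of Proposition \ref{CZI} together with Theorem \ref{EKI}, where $r = s = 1$ yields a two-sided equivalence and hence the claimed characterization of $K$.

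The underlying technical tools transfer verbatim: the Fefferman-Phong inequality (Lemma \ref{FP}), the Calder\'on-Zygmund decompositions (Propositions \ref{CZ}, \ref{CZI}), the maximal theorem, and the relative doubling Lemma \ref{DB} all rely only on the doubling property, the Poincar\'e inequality, and $A_\infty$-weight theory, all of which are formulated metric-measure-theoretically. The left-invariant subgradient norm $|Xf|$ plays the role of the Riemannian gradient throughout.

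The main obstacle I foresee is, in the exponential growth case, constructing a partition of unity $(\varphi_j)_{j \in J}$ on $G$ with bounded overlap of the balls $B^j = B(x_j, 1)$, $\supp \varphi_j \subset B^j$, and $\|\,|X \varphi_j|\,\|_\infty$ uniformly bounded in $j$. This follows from left invariance of $\rho$ and of $d\mu$: pick a smooth compactly supported bump $\varphi$ on $G$ with $\varphi \equiv 1$ near the identity, pick a maximal $\tfrac{1}{2}$-separated discrete set $\{x_j\} \subset G$ so that $\{B(x_j, \tfrac{1}{2})\}$ covers $G$, and set $\varphi_j(x) = \varphi(x_j^{-1} x) / \sum_k \varphi(x_k^{-1} x)$. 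Bounded overlap of the $B^j$ is automatic from local doubling. With this partition of unity in hand, the argument is a direct adaptation of the Riemannian proof, and I would simply refer to Sections 4 and 6 for the detailed computations.
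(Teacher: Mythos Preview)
Your proposal is correct and follows essentially the same approach as the paper: reduce Theorem \ref{G} to the Riemannian estimates of Theorems \ref{EKI} and \ref{EK} by observing that a connected unimodular Lie group with H\"ormander vector fields satisfies $(D)$ and $(P_1)$ in the polynomial growth case, and $(D_{loc})$ and $(P_{1loc})$ in the exponential growth case, so one may take $r=s=1$ and carry the proofs over verbatim with $|Xf|$ in place of $|\nabla f|$. The paper's own treatment is in fact terser than yours---it simply declares ``we are in the global case'' or ``we are in the local case'' without spelling out the partition-of-unity construction you (correctly) flag---so your added detail on building the $\varphi_j$ via left translation is a welcome clarification rather than a deviation.
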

 \begin{thm} Let $G$ be as above, $V\in RH_{qloc}$, for some $1<q\leq\infty$. Then, for $1\leq p_{1}< p < p_{2}<q_{0}$, $W^{1}_{p,V}$ is a real interpolation space between $W^{1}_{p_{1},V}$ and $W^{1}_{p_{2},V}$ where $q_{0}=\sup\left\lbrace q\in]1,\infty]:V\in RH_{qloc}\right\rbrace$. 
\end{thm}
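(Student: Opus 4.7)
The plan is to transplant the proofs of Theorem \ref{IS} and Corollary \ref{CI} to the Lie-group setting, using Theorem \ref{G} in place of the $K$-functional estimates of Theorems \ref{EKI}--\ref{EK}. The two growth regimes are handled uniformly: in both cases $(P_{1loc})$ is available, whence $(P_{sloc})$ holds for every $s\geq 1$, so $s_0=1$ in the Lie-group definition of $B_M$, and the compatibility condition $p>s_0$ from Corollary \ref{CI} reduces to $p>1$, which is automatic from $p>p_1\geq 1$.

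First I would prove the Lie-group analog of Theorem \ref{IS}: for $V\in RH_{qloc}$ with $1<q\leq\infty$ and for $1\leq r\leq s<p<q$, the identification
\begin{equation*}
W^{1}_{p,V} = (W^{1}_{r,V},W^{1}_{q,V})_{\theta,p}, \qquad \theta=\frac{q(p-r)}{p(q-r)} \textrm{ if } q<\infty
\end{equation*}
(and the analogous expression when $q=\infty$) holds with equivalent norms. The inclusion $(W^{1}_{r,V},W^{1}_{q,V})_{\theta,p}\subset W^{1}_{p,V}$ follows from the lower $K$-functional bound in Theorem \ref{G} combined with $\|f^{**}\|_{l}\sim\|f\|_{l}$ for $l>1$, applied to each of $f$, $|Xf|$ and $Vf$. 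The reverse inclusion uses the upper bound of Theorem \ref{G}, Hardy's inequality with exponent $1-p/q>0$, and the strong $(p/s,p/s)$-boundedness of the maximal operator $\mathcal{M}$ (valid since $p/s>1$). These are precisely the computations in the proof of Theorem \ref{IS}, which transfer verbatim because $G$ carries a (local or global) doubling measure and the Carnot-Carath\'{e}odory metric fits into the metric-measure framework used there.

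Second, given $1\leq p_1<p<p_2<q_0$, I would verify the hypotheses of the first step with $r=p_1$, $s=p_1$ and $q=p_2$: one has $V\in RH_{p_2 loc}$ by Proposition \ref{CRO}.1 since $p_2<q_0$; the Poincar\'{e} inequality $(P_{p_1 loc})$ holds since $(P_{1loc})$ does and $p_1\geq 1$; and $1\leq r\leq s<p<q$ is immediate from $1\leq p_1<p<p_2$. The first step then identifies $W^{1}_{p,V}$ as a real interpolation space between $W^{1}_{p_1,V}$ and $W^{1}_{p_2,V}$ directly, without any reiteration.

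The main obstacle lies not in this interpolation argument, which is routine once Theorem \ref{G} is in hand, but rather in establishing Theorem \ref{G} itself in the exponential-growth case, where only local doubling and local Poincar\'{e} are available. There one must localize via a uniformly locally finite cover of $G$ by unit balls, on which the relative doubling property holds (the analog of Lemma \ref{DB}), and run the local Calder\'{o}n-Zygmund decomposition of Proposition \ref{CZ} on each piece, exactly as in the local case of Theorem \ref{EK}. In the polynomial-growth case the global argument applies verbatim.
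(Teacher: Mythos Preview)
Your approach is correct but takes a different route from the paper. The paper's proof is a one-liner: ``Combine Theorem \ref{G} and the reiteration theorem.'' That is, the paper only ever uses the $K$-functional estimates of Theorem \ref{G} for the fixed pair $(W^{1}_{1,V},W^{1}_{q,V})$, identifies each $W^{1}_{p,V}$ (for $1<p<q$) as an interpolation space between $W^{1}_{1,V}$ and $W^{1}_{q,V}$, and then invokes reiteration to pass to arbitrary endpoints $p_1,p_2$. You instead mimic the proof of Corollary \ref{CI}: you establish a Lie-group analog of Theorem \ref{IS} with general $r$, and then specialize to $r=s=p_1$, $q=p_2$ directly, avoiding reiteration.

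There is one imprecision to flag. You cite Theorem \ref{G} for the $K$-functional bounds on $(W^{1}_{r,V},W^{1}_{q,V})$, but Theorem \ref{G} as stated is only for the pair $(W^{1}_{1,V},W^{1}_{q,V})$ (i.e.\ $r=s=1$). To run your direct argument with $r=p_1$ you need the analog of Theorem \ref{EK} on $G$ for general $1\le r\le s$, not merely Theorem \ref{G}. This is not a real obstacle---since $(P_{1loc})$ holds on $G$ one has $(P_{sloc})$ for every $s\ge1$, and the Calder\'on--Zygmund decomposition of Proposition \ref{CZ} (or its local version) then yields the $K$-functional estimates for $(W^{1}_{r,V},W^{1}_{q,V})$ exactly as in the manifold case---but you should state that you are using this extension rather than Theorem \ref{G} itself. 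With that correction your proof goes through, and it has the virtue of making the reiteration theorem unnecessary; the paper's proof, by contrast, is shorter because it leverages reiteration as a black box.
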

\begin{proof}
Combine Theorem \ref{G} and the reiteration theorem.
\end{proof}
\begin{rem} For $V\in A_{\infty}$, define the homogeneous Sobolev spaces $\dot{W}_{p,V}^{1}$  as the vector space of distributions $f$ such that $Xf$ and $Vf \in L_{p}$ and equip this space with the norm 
$$
\|f\|_{\dot{W}_{p,V}^{1}}=  \|\,|Xf|\,\|_{p}+\|Vf\|_{p}
$$
and $\dot{W}_{\infty,V}^{1}$ as the space of all Lipschitz functions $f$ on $G$ with $\|Vf\|_{\infty}<\infty$. Theses spaces are Banach spaces. If $G$ has polynomial growth, we obtain interpolation results analog to those of section 6.
\end{rem}

\paragraph{\large{\textbf{Examples:}}}
For examples of spaces on which our interpolation result applies see section 11 of \cite{badr1}. 
\\
Examples of $RH_{q}$ weights in $\mathbb{R}^{n}$ for $q<\infty$ are the power weights $|x|^{-\alpha}$ with $-\infty<\alpha<\frac{n}{q}$ and positive polynomials for $q=\infty$. We give an other example of $RH_{q}$ weights on a Riemannian manifold $M$: consider $f,\,g\in L_{1}(M)$, $1\leq r<\infty$ and $1<s\leq\infty$, then $V(x)=\left(\mathcal{M}f(x)\right)^{-(r-1)}\in RH_{\infty}$ and $W(x)=\left(\mathcal{M}g(x)\right)^{\frac{1}{s}}\in RH_{q}$ for all  $q<s$ ( $q=s$ if $s=\infty$) and hence $V+W\in RH_{q}$ for all $q<s$ ( $q=s$ if $s=\infty$) (see \cite{auscher6}, \cite{auscher7} for details).

\section{Appendix}

\paragraph{\textbf{ Proof of Proposition \ref{DC}}:}
We follow the method of Davies \cite{davies1}. Let $L(f)=L_{0}(f)+L_{1}(f)+L_{2}(f):=\int_{M}|f|^{p}d\mu+\int_{M}|\nabla f|^{p}d\mu+\int_{M}|Vf|^{p}d\mu$.
We will prove the proposition in three steps:
\begin{itemize}
\item[1.] Let $f\in H_{p,V}^{1}$. Fix $p_{0}\in M$ and let $\varphi\in C_{0}^{\infty}(\mathbb{R})$ satisfies $\varphi\geq 0$, $\varphi(\alpha)=1$ if $\alpha<1$ and $\varphi(\alpha)=0$ if $\alpha>2$. Then put $f_{n}(x)=f(x) \varphi(\frac{d(x,p_{0})}{n})$. Elementary calculations establish that $f_{n}$ lies in $H_{p,V}^{1}$. Moreover,
\begin{align*}
L(f-f_{n})&=\int_{M}|f(x)\{1-\varphi(\frac{d(x,p_{0})}{n})\}|^{p}d\mu(x)
\\
&+\int_{M}|\nabla f(x)\{1-\varphi(\frac{d(x,p_{0})}{n})\}-n^{-1}f(x) \varphi'(\frac{d(x,p_{0})}{n})\nabla (d(x,p_{0}))|^{p}d\mu(x)
\\
&+\int_{M}|V^{\frac{1}{2}}(x)f(x)(1-\varphi(\frac{d(x,p_{0})}{n}))|^{p}d\mu(x) 
\\
&\leq \int_{M}|f(x)\{1-\varphi(\frac{d(x,p_{0})}{n})\}|^{p}d\mu(x)
\\
&+ 2^{p-1}\int_{M}|\nabla f(x)\{1-\varphi(\frac{d(x,p_{0})}{n})\}|^{p}d\mu(x)+2^{p-1}n^{-p}\int_{M}|f(x)|^{p}|\varphi'(\frac{d(x,p_{0})}{n})|^{p}d\mu(x)
\\
&+\int_{M}V^{p}(x)|f(x)|^{p}|1-\varphi(\frac{d(x,p_{0})}{n})|^{p}d\mu(x).
\end{align*}
This converges to zero as $n\rightarrow \infty$ by the dominated convergence theorem. Thus the the set of functions $f \in H_{p,V}^{1}$ with compact support is dense in $H_{p,V}^{1}$.
\item[2.] Let $f\in H_{p,V}^{1}$ with compact support. Let $n>0$ and $F_{n}: \mathbb{R}\rightarrow \mathbb{R}$ be a smooth increasing function such that
 \begin{equation*}
F_{n}(s)= \begin{cases}
s \quad\textrm{ if }\, -n\leq s\leq n,
\\ 
n+1 \quad\textrm{ if } s \geq n+2,
\\
-n-1 \quad\textrm{ if } s\leq -n-2
\end{cases}
\end{equation*}
and $0\leq F'_{n}(s)\leq 1$ for all $s\in \mathbb{R}$. If we put $f_{n}(x):=F_{n}(f(x))$ then $|f_{n}(x)|\leq |f(x)|$ and $\lim_{n\rightarrow \infty}f_{n}(x)=f(x)$ for all $x\in M$. The dominated convergence theorem yields
$$
\lim_{n\rightarrow\infty}L_{0}(f-f_{n})=\lim_{n\rightarrow \infty}\int_{M}|f-f_{n}|^{p}d\mu=0
$$
and 
$$
\lim_{n\rightarrow\infty}L_{2}(f-f_{n})=\lim_{n\rightarrow \infty}\int_{M}V^{p}|f-f_{n}|^{p}d\mu=0
$$
Also
\begin{align*}
\lim_{n\rightarrow \infty}L_{1}(f-f_{n})&=\lim_{n\rightarrow\infty}\int_{M}|\nabla f-F'_{n}(f(x))\nabla f|^{p}d\mu(x)
\\
&=\lim_{n\rightarrow\infty}\int_{M}|1-F'_{n}(f(x))|^{p}|\nabla f(x)|^{p}d\mu(x)
\\
&=0.
\end{align*}
Therefore the set of bounded functions $f\in H_{p,V}^{1}$ with compact support is dense in $H_{p,V}^{1}$.
\item[3.] Let now $f\in H_{p,V}^{1}$ be bounded and with compact support.
 Consider locally finite coverings of $M$, $(U_{k})_{k}$, $(V_{k})_{k}$ with $\overline{U_{k}}\subset V_{k}$, $V_{k}$ being endowed with a real coordinate chart $\psi_{k}$. Let $(\varphi_{k})_{k}$ be a partition of unity subordinated to the covering $(U_{k})_{k}$, that is, for all $k$, $\varphi_{k}$ is a $C^{\infty}$ function compactly supported in $U_{k}$, $0\leq\varphi_{k}\leq 1$ and $\sum_{k=1}^{\infty}\varphi_{k}=1$. There exists a finite subset $I$ of $\mathbb{N}$ such that $f=\sum_{k\in I}f\varphi_{k}:=\sum_{k\in I}f_{k}$. Take $\epsilon>0$. The functions $g_{k}=f_{k}\circ\psi_{k}^{-1}$ --which belongs to $W_{p}^{1}(\mathbb{R}^{n})$ since $f$ and $|\nabla f| \in L_{ploc}$-- can be approximated by smooth functions $w_{k}$ with compact support (standard approximation by convolution). The $w_k$ are defined as $w_{k}=g_{k}*\alpha_{k}$ where $\alpha_{k}\in C_{0}^{\infty}(\mathbb{R}^{n})$  is a standard mollifier, $\supp w_{k}\subset \psi_{k}(V_{k})$ and $\|g_{k}-w_{k}\|_{W_{p}^{1}} 
\leq \frac{\epsilon}{2^{k}}$. Define
\begin{equation*}
h_{k}(x)=\begin{cases}
w_{k}\circ \psi_{k}(x)\, \textrm{ if } x\in V_{k},
\\
0 \; \textrm{  otherwise}.
\end{cases}
\end{equation*}
Thus $\supp h_{k}\subset V_{k}$ and 
\begin{equation*}
\|f_{k}-h_{k}\|_{p}=\left(\int_{V_{k}}|f_{k}-h_{k}|^{p}d\mu\right)^{\frac{1}{p}}=\|g_{k}-w_{k}\|_{p} 
\leq \frac{\epsilon}{2^{k}}.
\end{equation*}
\begin{equation*}
\|\,|\nabla(f_{k}-h_{k})|\|_{p}=\left(\int_{V_{k}}|\nabla(f_{k}-h_{k})|^{p}d\mu\right)^{\frac{1}{p}}=\|\,|\nabla(g_{k}-w_{k})|\,\|_{p} 
\leq \frac{\epsilon}{2^{k}}.
\end{equation*}
Hence the series $\sum_{k\in I} (f_{k}-h_{k})$ is convergent in $W_{p}^{1}$. Moreover $\sum_{k\in I} (f_{k}-h_{k})=f-h_{\epsilon}$  where  $h_{\epsilon}=\sum_{k\in I}h_{k}$, and
$\|f-h_{\epsilon}\|_{W_{p}^{1}}\leq \sum_{k\in I} \|f_{k}-h_{k}\|_{W_{p}^{1}}\leq \epsilon$.\\
If $l_{\epsilon}:=|f-h_{\epsilon}|^{p}$ then $\lim_{\epsilon\rightarrow 0}\|l_{\epsilon}\|_{1}=0$ and there exists a compact set $K$ which contains the support of every $l_{\epsilon}$. We have $\|h_{\epsilon}\|_{\infty}\leq \sharp I \|f\|_{\infty}$ for all $\epsilon>0$. Indeed
\begin{align*}
\sum_{k\in I}|h_{k}(x)|&=\sum_{k\in I}\int_{\mathbb{R}^{n}}|g_{k}(y)|\,\alpha_{k}(\psi_{k}(x)-y)dy
\\
&=\int_{\mathbb{R}^{n}}\sum_{k\in I}|f\varphi_{k}(\psi_{k}^{-1}(y))|\,\alpha_{k}(\psi_{k}(x)-y)dy
\\
&\leq \|f\|_{\infty}\int_{\mathbb{R}^{n}}\sum_{k\in I}\varphi_{k}(\psi_{k}^{-1}(y))\,\alpha_{k}(\psi_{k}(x)-y)dy
\\
&\leq \|f\|_{\infty}\sum_{k\in I}\int_{\psi_{k}(U_{k})}\varphi_{k}(\psi_{k}^{-1}(y))\,\alpha_{k}(\psi_{k}(x)-y)dy
\\
&\leq \|f\|_{\infty}\sum_{k\in I}\int_{\mathbb{R}^{n}}\alpha_{k}(z)dz
\\
&\leq \sharp I \|f\|_{\infty}.
\end{align*}
It follows that $\|l_{\epsilon}\|_{\infty}\leq 2^{p-1}(1+\sharp I)\|f\|_{\infty}^{p}=C\|f\|_{\infty}^{p}$ ($C$ being independent of $\epsilon$ it depends just on $f$) for all $\epsilon>0$. We claim that these facts suffice to deduce that $\lim_{\epsilon\rightarrow 0}\int_{M}l_{\epsilon}V^{p}d\mu=0$, that is 
$$
\lim_{\epsilon\rightarrow 0}L_{2}(f-l_{\epsilon})=0.
$$
Hence $C_{0}^{\infty}$ is dense in $H_{p,V}^{1}$.
\item[4.] It remains to prove the above claim. Since $V\in RH_{ploc}$, there exists $r>p$ such that $V\in RH_{rloc}$ and therefore $V^{p}\in L_{t,loc}$ where $t=\frac{r}{p}>1$. Hence, by H\"{o}lder inequality we get
\begin{align*}
0\leq \int_{M}l_{\epsilon}V^{p}d\mu&=\int_{K}l_{\epsilon}V^{p}d\mu
\\
&\leq \|l_{\epsilon}\|_{L_{t'}(K)}\,\|V^{p}\|_{L_{t}(K)}
\\
&\leq C \|f\|_{\infty}^{\frac{p}{r}}\epsilon^{\frac{1}{t'}}
\end{align*}
 for all $\epsilon>0$, $t'$ being the conjugate exponent of $t$. The proof of Proposition \ref{DC} is therefore complete.
\end{itemize}
\bibliographystyle{plain}
\bibliography{latex2}
\end{document}